\newtheorem{thm}{Theorem}[section]
\newtheorem{lem}[thm]{Lemma}
\newtheorem{conj}[thm]{Conjecture}
\newtheorem{prop}[thm]{Proposition}
\newtheorem{cor}[thm]{Corollary}
\newtheorem{fact}[thm]{Fact}
\theoremstyle{definition}
\newtheorem{defn}[thm]{Definition}
\newtheorem{prb}[thm]{Problem}
\newtheorem{example}[thm]{Example}
\theoremstyle{remark}
\newtheorem*{qst*}{Question}
\newtheorem{rem}[thm]{Remark}
\newtheorem*{rem*}{Remark}
\newtheorem*{claim}{Claim}
\numberwithin{equation}{section} 
\numberwithin{figure}{section}
\numberwithin{table}{section}
\newcommand{\supp}{\mathop{\mathrm{supp}}}
\newcommand{\intr}{\mathop{\mathrm{int}}}
\newcommand{\cl}{\mathop{\mathrm{cl}}}
\newcommand{\V}{\mathsf{V}}
\newcommand{\D}{\mathsf{D}}
\newcommand{\proj}{\mathrm{P}}
\newcommand{\relint}{\mathop{\mathrm{relint}}}
\newcommand{\relbd}{\mathop{\mathrm{relbd}}}
\newcommand{\bd}{\mathop{\mathrm{bd}}}
\newcommand{\im}{\mathop{\mathrm{Im}}}
\newcommand{\graff}{\mathrm{Graff}}
\newcommand{\SM}{\mathrm{S}}
\newcommand{\HH}{\mathrm{H}}
\begin{document}

\title{On Minkowski's monotonicity problem}

\author[Van Handel]{Ramon van Handel}
\address{Department of Mathematics, Princeton University, 
Princeton, NJ 08544, USA}
\email{rvan@math.princeton.edu}

\author[Wang]{Shouda Wang}
\address{PACM, Princeton University, 
Princeton, NJ 08544, USA}
\email{shoudawang@princeton.edu}

\begin{abstract}
We address an old open question in convex geometry that dates back
to the work of Minkowski: what are the equality cases of the monotonicity
of mixed volumes? The problem is equivalent to that of providing
a geometric characterization of the support of mixed area measures.
A conjectural characterization was put forward by Schneider (1985), but 
has been verified to date only for special classes of convex bodies.
In this paper we resolve one direction of Schneider's conjecture for 
arbitrary convex bodies in $\mathbb{R}^n$, and resolve the full conjecture 
in $\mathbb{R}^3$. 
Among the implications of these results is a mixed counterpart of the
classical fact, due to Monge, Hartman--Nirenberg, and Pogorelov, that 
a surface with vanishing Gaussian curvature is a ruled surface.
\end{abstract}

\subjclass[2020]{52A39; 
                 52A40; 
                 35J96} 

\keywords{Mixed volumes; mixed areas measures; mixed Hessian measures;
extremum problems; convex geometry; homogeneous Monge-Amp\`ere equations}

\maketitle

\section{Introduction and main results}
\label{sec:intro}

The foundation for the modern theory of convex geometry was laid by 
Minkowski in a seminal 1903 paper \cite{Min03} and in a longer manuscript 
from around the same time that was published posthumously \cite{Min11}. A 
number of basic questions that were raised in these works remain open to 
this day. The aim of the present paper is to significantly advance what is 
known about one of these problems: the characterization of the equality 
cases of the monotonicity of mixed volumes. We also discuss implications 
to the related notion of mixed Hessian measures, and to mixed analogues of 
the solution of homogeneous Monge-Amp\`ere equations.

\subsection{Mixed volumes and the monotonicity problem}

For any convex bodies $K_1,\ldots,K_m$ in $\mathbb{R}^n$ and
$\lambda_1,\ldots,\lambda_m\ge 0$, the volume 
$$
	\mathrm{Vol}_n(\lambda_1K_1+\cdots+\lambda_mK_m)
	=
	\sum_{i_1,\ldots,i_n=1}^m
	\V_n(K_{i_1},\ldots,K_{i_n})\,
	\lambda_{i_1}\cdots\lambda_{i_n}
$$
is a homogeneous polynomial. Its coefficients $\V_n(C_1,\ldots,C_n)$,
called \emph{mixed volumes}, are important geometric parameters that 
capture many familiar quantities (volume, surface area, mean width, 
projection volumes, $\ldots$) as special cases \cite{BF87,Sch14}.

One of the simplest properties of mixed volumes is their monotonicity:
if $K\subseteq L$ and $C_1,\ldots,C_{n-1}$ are convex bodies in 
$\mathbb{R}^n$, then
\begin{equation}
\label{eq:mono}
	\V_n(K,C_1,\ldots,C_{n-1}) \le \V_n(L,C_1,\ldots,C_{n-1}).
\end{equation}
This paper is concerned with the following
open problem of Minkowski \cite[\S 28]{Min11}.

\begin{prb}
\label{prb:mink}
When does equality hold in \eqref{eq:mono}?
\end{prb}

The analogous question for volume is trivial: $K\subseteq L$ and 
$\mathrm{Vol}(K)=\mathrm{Vol}(L)>0$ imply $K=L$. In contrast, 
\eqref{eq:mono} has a rich family of equality cases that gives rise to 
surprising phenomena. Let us illustrate this with an example in 
$\mathbb{R}^3$. Thoughout this paper, $B$ always denotes the Euclidean 
unit ball.

\begin{example}
For any convex body $K$ in $\mathbb{R}^3$,
the surface area and mean width can be expressed as
$\V_3(K,K,B) = \frac{1}{3}\mathrm{S}(K)$ and $\V_3(K,B,B) = \frac{2\pi}{3} 
\mathrm{W}(K)$, respectively.
Thus \eqref{eq:mono} gives rise to the geometric inequalities
$$
	2\pi
	\,\mathrm{inr}(K) 
	\, \mathrm{W}(K) 
	\le 
	\mathrm{S}(K) 
	\le 2\pi
	\,\mathrm{outr}(K)
	\,\mathrm{W}(K),
$$
where $\mathrm{inr}(K)$ and $\mathrm{outr}(K)$ denote the inradius and 
outradius of $K$.
The equality cases in these inequalities correspond to solutions of 
variational problems: among all convex bodies in $\mathbb{R}^3$ with 
unit outradius (inradius), which maximize (minimize) the ratio
of surface area to mean width?
Even though the two inequalities arise in a completely symmetric
manner, their extremals are very different.
\smallskip
\begin{enumerate}[$\bullet$]
\itemsep\medskipamount
\item A convex body in $\mathbb{R}^3$ with unit outradius maximizes
the ratio
of surface area to mean width if and only if it is a translate of the 
Euclidean unit ball $B$.
\item A convex body in $\mathbb{R}^3$ with unit inradius minimizes the 
ratio
of surface area to mean width if and only if it is a translate of a cap 
body of $B$ (Figure \ref{fig:cap}).
\end{enumerate}
\smallskip
This special case of Problem \ref{prb:mink} is due to Favard 
\cite{Fav33}, see \cite[Theorem 7.6.17]{Sch14}.
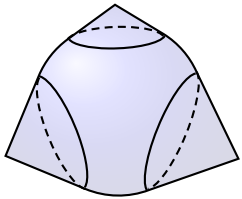
\begin{figure}
\centering
\begin{tikzpicture}
\begin{scope}[rotate=337.5,scale=1.15]

  \shade[ball color = blue, opacity = 0.15] (0,-1) arc 
(270:313.2:1) -- (1.5,0) -- (0.666,0.745) arc (46.8:80.3:1) 
-- (-.5,1.1) -- (-.853,.521) arc (148.6:180:1) -- (-1,-1) -- 
(0,-1);

  \draw[thick, densely dashed] (0,-1) [rotate=-225] arc 
[start angle = 170, end angle = 10,
    x radius = .707, y radius = .2];

  \draw[thick] (-1,0) [rotate=-45] arc [start angle = 170, 
end angle = 10,
    x radius = .707, y radius = .2];

  \draw[thick] (0.666,-0.745) [rotate=90] arc [start angle = 
170, end angle = 10,
    x radius = .745, y radius = .2];

  \draw[thick, densely dashed] (0.666,0.745) [rotate=-90] arc 
[start angle = 170, end angle = 10,
    x radius = .745, y radius = .2];

  \draw[thick] (.169,.986) [rotate=204.5] arc [start angle = 
170, end angle = 10,
    x radius = .561, y radius = .15];

  \draw[thick, densely dashed] (-.853,.521) [rotate=24.5] arc 
[start angle = 170, end angle = 10,
    x radius = .561, y radius = .15];

  \draw[thick] (0,-1) arc (270:313.2:1) -- (1.5,0) -- 
(0.666,0.745) arc (46.8:80.3:1) -- (-.5,1.1) -- (-.853,.521) 
arc (148.6:180:1) -- (-1,-1) -- (0,-1);

\end{scope}
\end{tikzpicture}
\caption{A cap body of $B$: i.e., the convex hull of 
$B$ with a finite or countable number of points so that the cones
emanating from the points are disjoint.\label{fig:cap}}
\end{figure}
\end{example}

The following reformulation of Problem \ref{prb:mink} will be the main
focus of this paper.
Recall that the support function $h_K$ of
a convex body $K$ in $\mathbb{R}^n$ is defined by
$$
	h_K(u) = \sup_{x\in K} \langle u,x\rangle.
$$
Geometrically, $h_K(u)$ is the (signed) distance to the origin of the
supporting hyperplane of $K$ with outer normal direction $u\in S^{n-1}$.
Mixed volumes can be expressed in terms of the support function of
one of the bodies as
$$
	\V_n(K,C_1,\ldots,C_{n-1}) =
	\frac{1}{n}\int h_K\,d\SM_{C_1,\ldots,C_{n-1}},
$$
where $\SM_{C_1,\ldots,C_{n-1}}$ is the \emph{mixed area measure}
on $S^{n-1}$. Now note that if $K\subseteq L$, then $h_L-h_K\ge 0$ 
pointwise, and thus the following are equivalent:
\smallskip
\begin{enumerate}[a.]
\itemsep\medskipamount
\item Equality holds in \eqref{eq:mono}.
\item $K$ and $L$ have the same supporting hyperplanes with outer normal 
direction in the support of the measure $\SM_{C_1,\ldots,C_{n-1}}$.
\end{enumerate}
\smallskip
We can therefore reformulate Problem \ref{prb:mink} as follows.

\begin{prb}
\label{prb:schneider}
Provide a geometric characterization of $\supp \SM_{C_1,\ldots,C_{n-1}}$.
\end{prb}

Beside its fundamental interest in convex geometry, Problem 
\ref{prb:schneider} is closely connected with the long-standing problem of 
providing a complete characterization of the equality cases of the 
Alexandrov--Fenchel inequality for general convex bodies 
\cite{Sch85,SvH23}, and has further implications that will be discussed in 
\S\ref{sec:impl}.

While Problem \ref{prb:schneider} has to date been resolved only for 
special classes of convex bodies, a precise conjectural picture that is 
consistent with all known cases was put forward by Schneider in 1985 
\cite{Sch85}. We presently discuss Schneider's conjecture and recall the 
previously known results in this direction.

\subsection{Schneider's conjecture}
\label{sec:schneider}

To motivate the conjectural picture, it is instructive to first consider 
the special case that $C_1,\ldots,C_{n-1}$ are all polytopes.
In this setting, the mixed area measure $\SM_{C_1,\ldots,C_{n-1}}$ is 
atomic with \cite[(5.22)]{Sch14}
\begin{equation}
\label{eq:mixedapoly}
	\SM_{C_1,\ldots,C_{n-1}}(\{u\}) =
	\V_{n-1}(F(C_1,u),\ldots,F(C_{n-1},u))
\end{equation}
for all $u\in S^{n-1}$, where $F(C,u)$ denotes the exposed face of $C$ 
with normal direction $u$ (see \S\ref{sec:facesetc}).
Thus to understand the support of the mixed area measure of polytopes,
it suffices to understand when mixed volumes are positive. The latter 
is classical and admits an elementary proof, see 
\cite[Theorem 5.1.8]{Sch14}.

\begin{fact}
\label{fact:mvpost}
The following are equivalent for convex bodies $C_1,\ldots,C_n$ in 
$\mathbb{R}^n$.
\begin{enumerate}[a.]
\itemsep\smallskipamount
\item $\V_n(C_1,\ldots,C_n)>0$.
\item There exist segments $I_i\subseteq C_i$, $i\in[n]$ with
linearly independent directions.
\item $\dim\big(\sum_{i\in I}C_i\big)\ge |I|$ for all $I\subseteq[n]$.
\end{enumerate}
\end{fact}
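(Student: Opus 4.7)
The plan is to prove the cycle (b)$\Rightarrow$(a)$\Rightarrow$(c)$\Rightarrow$(b). The direction (b)$\Rightarrow$(a) is immediate from the explicit formula for mixed volumes of segments: if $I_i\subseteq C_i$ is a segment of length $\ell_i$ in direction $v_i$ and $v_1,\ldots,v_n$ are linearly independent, then $\sum_i\lambda_i I_i$ is a parallelepiped of volume $\prod_i\lambda_i\ell_i\cdot|\det(v_1,\ldots,v_n)|$; comparing with the polynomial expansion yields $n!\,\V_n(I_1,\ldots,I_n)=|\det(v_1,\ldots,v_n)|\prod_i\ell_i>0$, and monotonicity of mixed volumes then gives $\V_n(C_1,\ldots,C_n)\geq\V_n(I_1,\ldots,I_n)>0$.

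For (c)$\Rightarrow$(b), let $L_i\subseteq\mathbb{R}^n$ denote the linear subspace parallel to $\mathrm{aff}(C_i)$. Since the affine hull of a Minkowski sum of convex sets is the Minkowski sum of their affine hulls, $\dim\sum_{i\in I}C_i=\dim\sum_{i\in I}L_i$, so (c) rewrites as the Hall-type dimension condition $\dim\sum_{i\in I}L_i\geq|I|$ for every $I\subseteq[n]$. A standard Hall/Rado argument in the linear matroid then produces linearly independent $v_i\in L_i$; since the relative interior of $C_i$ (after translation) is open in $L_i$, every nonzero vector in $L_i$ is the direction of some segment $I_i\subseteq C_i$, and this yields (b).

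The main step is (a)$\Rightarrow$(c), which I would establish by contraposition. Assuming $\dim\sum_{i\in I}C_i=d<|I|$, translation invariance of mixed volumes lets us arrange that every $C_i$ with $i\in I$ lies in a common $d$-dimensional linear subspace $W\subseteq\mathbb{R}^n$. Fix any complement $W'$, and for $\lambda,\mu\geq 0$ consider
\[
	P(t) \;=\; t\sum_{i\in I}\lambda_i C_i \;+\; \sum_{j\notin I}\mu_j C_j.
\]
A direct computation of the decomposition $P(t)=tR+Q$ with $R\subseteq W$ shows that the $W$-fiber of $P(t)$ above any $y\in W'$ equals $t\sum_{i\in I}\lambda_i C_i+G_y$ for a convex set $G_y\subseteq W$ depending only on $\mu$ and $y$. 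Because this fiber lies in the $d$-dimensional space $W$, its $d$-volume is a polynomial in $t$ of degree $\leq d$, and Fubini then makes $\mathrm{Vol}_n(P(t))$ itself a polynomial in $t$ of degree $\leq d$. On the other hand, the mixed-volume expansion of $\mathrm{Vol}_n(P(t))$ contains the monomial $n!\,\V_n(C_1,\ldots,C_n)\prod_{i\in I}(t\lambda_i)\prod_{j\notin I}\mu_j$, of degree $|I|>d$ in $t$; matching coefficients in $\lambda,\mu,t$ (which are linearly independent monomials as $\lambda,\mu$ vary) forces $\V_n(C_1,\ldots,C_n)=0$.

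The principal obstacle is the Fubini step in (a)$\Rightarrow$(c): one must describe the $W$-fibers of $P(t)$ when the second summand $\sum_{j\notin I}\mu_j C_j$ is not contained in $W$, and then extract single mixed-volume coefficients monomial by monomial from the bivariate polynomial. By contrast, (c)$\Rightarrow$(b) is essentially a matroid-theoretic lemma, standard once one has identified the direction spaces $L_i$ as the correct combinatorial data.
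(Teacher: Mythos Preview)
Your argument is correct. The paper does not itself prove this fact: it quotes it as classical and cites \cite[Theorem~5.1.8]{Sch14}, so there is no in-paper proof to compare against. Your cycle (b)$\Rightarrow$(a)$\Rightarrow$(c)$\Rightarrow$(b) is the standard route. The step (c)$\Rightarrow$(b) via Rado's theorem in the linear matroid is precisely the ``general fact of linear algebra'' the paper later invokes (citing \cite[Lemma~2.3]{Sch88}) in the proof of Lemma~\ref{lem:extreme}. Your Fubini step in (a)$\Rightarrow$(c) is also fine: with $R=\sum_{i\in I}\lambda_iC_i\subseteq W$ and $Q=\sum_{j\notin I}\mu_jC_j$, the fiber of $tR+Q$ over $y\in W'$ is indeed $tR+\pi_W\bigl(Q\cap(W+y)\bigr)$, whose $d$-volume is a polynomial of degree at most $d=\dim W$ in $t$; integrating over $y$ preserves the degree bound, and since the monomial $\prod_{i\in I}(t\lambda_i)\prod_{j\notin I}\mu_j$ has $t$-degree $|I|>d$, its coefficient $n!\,\V_n(C_1,\ldots,C_n)$ must vanish.
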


The above ingredients suffice to provide a satisfactory answer 
to Problem \ref{prb:schneider} for polytopes: combining
\eqref{eq:mixedapoly} and Fact \ref{fact:mvpost} yields
$$
	\supp \SM_{C_1,\ldots,C_{n-1}} =
	\big\{ u\in S^{n-1}:
	\dim\big(F(C_I,u)\big)\ge |I|
	\text{ for all }I\subseteq[n-1]\big\}
$$
whenever $C_1,\ldots,C_{n-1}$ are convex polytopes in $\mathbb{R}^n$,
where we define 
$$
	C_I = {\textstyle \sum_{i\in I} C_i}.
$$
However, this conclusion fails to extend to general convex 
bodies: for example, if $C_1,\ldots,C_{n-1}$ are strictly convex, then 
$\dim(F(C_I,u))=0$ for all $u,I$.

It was realized by Schneider \cite{Sch85} that the characterization 
remains meaningful for general convex bodies if $F(C,u)$ is 
replaced by the ``tangent space'' $T(C,u)^\perp$ of $C$ with normal 
direction $u$, where the touching cone $T(C,u)$ is defined as the unique 
face of a normal cone of $C$ so that $T(C,u)$ contains $u$ in its relative 
interior. Equivalent formulations of the following
definition are given in \S\ref{sec:extreme}.

\begin{defn}
\label{defn:extreme}
Let $C_1,\ldots,C_{n-1}$ be convex bodies in $\mathbb{R}^n$. Then 
$u\in S^{n-1}$ is called \emph{$(C_1,\ldots,C_{n-1})$-extreme}
if $\dim\big(T(C_I,u)^\perp\big)\ge |I|$
for all $I\subseteq[n-1]$.
\end{defn}

\begin{conj}[Schneider]
\label{conj:schneider}
Let $C_1,\ldots,C_{n-1}$ be convex bodies in $\mathbb{R}^n$. Then
$$
	\supp \SM_{C_1,\ldots,C_{n-1}} =
	\cl\big\{
	u\in S^{n-1}:u\text{ is }(C_1,\ldots,C_{n-1})\text{-extreme}
	\big\}.
$$
\end{conj}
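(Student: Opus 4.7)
The plan is to prove the two inclusions separately, using polytopal approximation as the basic tool. Fix Hausdorff approximations $C_i^{(k)} \to C_i$ by polytopes, so that the mixed area measures $\SM^{(k)} := \SM_{C_1^{(k)},\ldots,C_{n-1}^{(k)}}$ converge weakly to $\SM := \SM_{C_1,\ldots,C_{n-1}}$. Schneider's conjecture is already known when the bodies are polytopes, via the atomic formula~\eqref{eq:mixedapoly} combined with Fact~\ref{fact:mvpost}, so the task reduces to propagating this information through the limit.

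First I would tackle the inclusion $\cl(E) \subseteq \supp\SM$, where $E$ denotes the set of $(C_1,\ldots,C_{n-1})$-extreme directions. Since $\supp \SM$ is closed, it suffices to fix $u_0 \in E$ and show that every neighborhood $U$ of $u_0$ satisfies $\SM(U) > 0$. The extremeness condition supplies, for every $I \subseteq [n-1]$, a subspace of dimension at least $|I|$ inside $T(C_I,u_0)^\perp$; from these subspaces I would extract independent directions and short segments $s_i \subseteq C_i$ nearly tangent to the supporting hyperplane at $u_0$. The plan is then to select the polytopal approximations $C_i^{(k)}$ so that they incorporate these segments near $u_0$, thereby producing a direction $v_k \to u_0$ in $\supp \SM^{(k)}$ at which the faces $F(C_i^{(k)},v_k)$ contain segments in linearly independent directions. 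Fact~\ref{fact:mvpost} then bounds the atomic mass $\SM^{(k)}(\{v_k\})$ away from zero, and weak convergence transfers this mass to $\SM(U)$.

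For the reverse inclusion $\supp \SM \subseteq \cl(E)$, the approach is contrapositive: take $u_0 \notin \cl(E)$, choose a neighborhood $U$ disjoint from $E$, and show $\SM(U) = 0$. At every $u \in U$ the non-extremeness supplies an index set $I_u \subseteq [n-1]$ with $\dim T(C_{I_u},u)^\perp < |I_u|$. I would argue that this deficiency is inherited by the polytopal approximation in an upper semicontinuous sense: for $v \in U$ close enough to $u$ and $k$ large, one should have $\dim F(C_{I_u}^{(k)},v) < |I_u|$ as well, so by Fact~\ref{fact:mvpost} the atom $\SM^{(k)}(\{v\})$ vanishes. Since there are only $2^{n-1}$ choices for $I_u$, a covering of $U$ by neighborhoods associated to a single $I$ and a standard weak-convergence argument yields $\SM^{(k)}(U) \to 0$, hence $\SM(U) = 0$.

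The hard part in both directions is the same delicate issue: relating the touching cone $T(C,u)$ of the original body to the face $F(C^{(k)},v)$ of an approximating polytope at nearby $v$. In general, semicontinuity of face dimensions fails, and generic polytopal approximations do not faithfully reflect the subtle tangential structure encoded by touching cones --- the two notions agree for polytopes by design, but can drift apart in the limit. I expect this mismatch is the reason only one inclusion is accessible for arbitrary $n$, while the full conjecture requires the dimensional restriction to $\mathbb{R}^3$, where touching cones have an essentially one-dimensional (chord-like) structure that can be tracked through approximation.
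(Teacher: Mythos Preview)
This statement is Schneider's \emph{conjecture}; the paper does not prove it in full. The paper establishes the upper-bound inclusion $\supp \SM_{C_1,\ldots,C_{n-1}} \subseteq \cl(E)$ for arbitrary bodies (Theorem~\ref{thm:mainupper}) and the full equality only in the special case $\SM_{K,\ldots,K,L}$ (Theorem~\ref{thm:mainlower}). Your proposal should be read against those partial results, and against them both of your arguments have genuine gaps, not merely ``delicate'' steps.

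For the lower bound $\cl(E)\subseteq\supp\SM$, your key step is that Fact~\ref{fact:mvpost} ``bounds the atomic mass $\SM^{(k)}(\{v_k\})$ away from zero.'' It does not: Fact~\ref{fact:mvpost} gives strict positivity of a single mixed volume, but no quantitative lower bound uniform in $k$. As the polytopes $C_i^{(k)}$ refine, the faces $F(C_i^{(k)},v_k)$ shrink and the atom $\V_{n-1}(F(C_1^{(k)},v_k),\ldots,F(C_{n-1}^{(k)},v_k))$ can tend to zero; weak convergence then gives nothing. The paper's proof of the lower bound (even in the restricted case $\SM_{K[n-2],L}$) avoids approximation entirely: it argues by induction on the dimension via projection, using the equality case of the Alexandrov--Fenchel inequality (Lemma~\ref{lem:0extaf}) and a Wulff-shape modification of $K$ (Lemma~\ref{lem:maxbody}) to force $\dim T(K,u)=1$ without changing the mixed area measure. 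Section~\ref{sec:obstruction} moreover shows that this inductive projection scheme \emph{provably fails} when $1<\dim T(K,u)<n-1$, which is why the lower bound remains open for general $C_1,\ldots,C_{n-1}$ in $n\ge 4$.

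For the upper bound $\supp\SM\subseteq\cl(E)$, your argument rests on the claim that if $\dim T(C_I,u)^\perp < |I|$ then $\dim F(C_I^{(k)},v) < |I|$ for $v$ near $u$ and $k$ large. This upper semicontinuity is false: take $C$ a single point, so $\dim T(C,u)^\perp = 0$ for every $u$, yet any full-dimensional polytope approximating $C$ has facets of dimension $n-1$ in every neighborhood. What one could hope for is that the \emph{mass} $\SM^{(k)}(U)$ tends to zero, but this is a quantitative statement that your argument does not supply. The paper's proof of Theorem~\ref{thm:mainupper} is entirely different and does not use polytopal approximation: it constructs a countable Lipschitz cover $u_m:[0,1]^k\to\bd K$ of the $k$-singular boundary points of $K$ (Proposition~\ref{prop:klee}), and then bounds $\SM_{(K+tL)[n-1]}$ of the relevant normal directions by a tube estimate of order $O(t^{n-k-1})$, which forces the coefficient $\SM_{K[k+1],L[n-k-2]}$ in the polynomial expansion to vanish on that set.
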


\smallskip

To date, Conjecture \ref{conj:schneider} has been verified only for 
special classes of convex bodies. In particular, the conjecture is known
to hold in the following cases:
\begin{enumerate}[$\bullet$]
\itemsep\smallskipamount
\item $C_1,\ldots,C_{n-1}$ are convex polytopes, as explained above
\cite{Sch85};
\item $C_1=\cdots=C_k$ and 
$C_{k+1},\ldots,C_{n-1}$ are smooth and strictly convex 
\cite{Sch75,Sch85};\footnote{%
The strict convexity assumption is not needed and can be removed
along the lines of \cite[\S 14]{SvH23}.
}
\item $C_1,\ldots,C_{n-1}$ are zonoids \cite{Sch88,SvH23,HR24} or
``polyoids'' \cite{HR24};
\item various combinations of the above cases \cite{SvH23,HR24}.
\end{enumerate}
The proofs of all these results rely crucially on the special structure
of the bodies involved. While all known cases are consistent with
Conjecture \ref{conj:schneider}, none of the known results to date sheds 
any light on its validity for general convex bodies.

\subsection{Main results}

\subsubsection{Upper bound}

Our first main result proves one half of Conjecture~\ref{conj:schneider} 
in full generality: $\supp \SM_{C_1,\ldots,C_{n-1}}$ is always included in 
the closure of the set of $(C_1,\ldots,C_{n-1})$-extreme directions. We 
will in fact prove a significantly stronger result of which this is a 
consequence, as we now explain.

\begin{figure}
\centering
\begin{tikzpicture}

\begin{scope}[scale=3]
\fill[red!10!white] (-.866,0.5) -- (0,0) -- (-.866,-0.5);

\draw[thick,red] (0,0) to (-.866,0.5);
\draw[thick,blue,->] (0,0) to (-.433,0.25);
\draw[blue] (-.175,0.175) node {$\scriptstyle u$};

\draw[thick,color=black!30!white] (-0.25,-.433) to (0.25,.433);
\draw (0.3,.5) node {$\scriptstyle T(C,u)^\perp$};

\draw (-.5,0) node {$\scriptstyle N(C,F(C,u))$};

\draw[red] (-.866,0.55) node {$\scriptstyle T(C,u)$};

\draw[thick,fill=black!5!white] (-0.001,-0.002) to[out=60,in=90] (1.5,0) 
to[out=270,in=-60] (-0.001,0.002);

\draw (0.8,0) node {$\scriptstyle C$};

\end{scope} 
\end{tikzpicture}
\caption{Illustration of a normal direction $u$ of a convex body $C$ in 
$\mathbb{R}^2$ that is $C$-extreme but not $C$-exposed. In this case, the
``tangent space'' $T(C,u)^\perp$ is only tangent to the boundary of $C$
in one direction.\label{fig:exex}}
\end{figure}
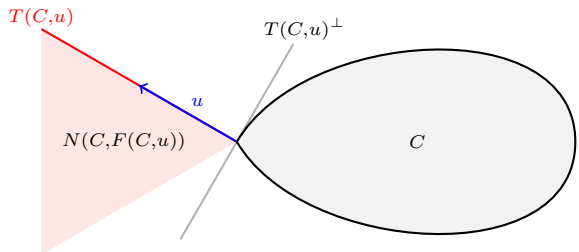

The notion of a ``tangent space'' of $C$ with normal direction $u$ is 
somewhat ambiguous in cases where $u$ does not lie in the relative 
interior of a normal cone of $C$. This situation is illustrated in Figure 
\ref{fig:exex}: in this case $T(C,u)^\perp$ is only tangent to the body in 
one direction. This situation can be avoided by replacing the touching 
cone $T(C,u)$ by the normal cone $N(C,F(C,u))$ of $C$ at $F(C,u)$ (that 
is, the smallest normal cone of $C$ that contains $u$) in Definition 
\ref{defn:extreme}.

\begin{defn}
\label{defn:exposed}
Let $C_1,\ldots,C_{n-1}$ be convex bodies in $\mathbb{R}^n$. Then 
$u\in S^{n-1}$ is called \emph{$(C_1,\ldots,C_{n-1})$-exposed}
if $\dim\big(N(C_I,F(C_I,u))^\perp\big)\ge |I|$
for all $I\subseteq[n-1]$.
\end{defn}

The following is the first main result of this paper.

\begin{thm}
\label{thm:mainupper}
For any convex bodies $C_1,\ldots,C_{n-1}$ in $\mathbb{R}^n$, we have
$$
	\SM_{C_1,\ldots,C_{n-1}}\big(
	\big\{ u\in S^{n-1}: u\text{ is not }(C_1,\ldots,C_{n-1})\text{-exposed}
	\big\}
	\big) = 0.
$$
\end{thm}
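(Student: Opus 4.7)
The plan is to localize the statement to individual non-exposed directions and establish vanishing via Fact~\ref{fact:mvpost} together with a polytopal approximation.

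First, I would verify that the set of non-$(C_1,\ldots,C_{n-1})$-exposed directions is open in $S^{n-1}$: for each subset $I\subseteq[n-1]$, the map $v\mapsto\dim N(C_I,F(C_I,v))^\perp$ is upper semicontinuous on $S^{n-1}$, since small generic perturbations of $v$ shrink the exposed face and enlarge the linear hull of its normal cone. Consequently, the theorem reduces (by a covering argument on compact subsets) to the local claim: for each non-exposed $u\in S^{n-1}$, there is a neighborhood $V$ of $u$ with $\SM_{C_1,\ldots,C_{n-1}}(V)=0$.

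Fix such a non-exposed $u$ and a witness $I\subseteq[n-1]$ with $d:=\dim N(C_I,F(C_I,u))^\perp<|I|$. The identity $N(C_I,F(C_I,u))=\bigcap_{i\in I}N(C_i,F(C_i,u))$ yields $N(C_i,F(C_i,u))^\perp\subseteq L:=N(C_I,F(C_I,u))^\perp$ for every $i\in I$; since each face $F(C_i,u)$ lies in an affine translate of $N(C_i,F(C_i,u))^\perp$, all $|I|$ bodies $\{F(C_i,u)\}_{i\in I}$ jointly sit in an affine translate of the $d$-dimensional subspace $L$. Because $d<|I|$, Fact~\ref{fact:mvpost} forces the relevant $|I|$-dimensional mixed volume of these faces to vanish. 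In particular, when $C_1,\ldots,C_{n-1}$ are polytopes, this immediately proves the theorem via the atomic formula~\eqref{eq:mixedapoly}, recovering Schneider's polytopal case.

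To extend the conclusion to arbitrary convex bodies, I would approximate each $C_i$ by polytopes $P_i^{(k)}\to C_i$ in Hausdorff distance, obtaining weak convergence $\SM^{(k)}\to\SM_{C_1,\ldots,C_{n-1}}$. The technical heart of the proof is then a quantitative local bound: showing that $\SM^{(k)}(V)\to 0$ as $k\to\infty$ for a fixed small neighborhood $V$ of $u$, which by the Portmanteau theorem yields $\SM_{C_1,\ldots,C_{n-1}}(V)=0$. The main obstacle is that polytopal faces with normals near $u$ need not converge in any clean sense to the degenerate face $F(C_I,u)$, so Fact~\ref{fact:mvpost} cannot be applied uniformly at the polytopal level. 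I expect the cleanest route, which I would pursue, is to bypass this approximation difficulty entirely by deriving a direct local integral representation that expresses $\SM_{C_1,\ldots,C_{n-1}}(V)$ as a lower-dimensional mixed volume of slices or projections of the $C_i$ near $F(C_I,u)$---for instance via a Cauchy--Kubota-type projection identity, or through the mixed Hessian measure framework alluded to in the title---so that the vanishing is a one-step consequence of $\dim L<|I|$ and Fact~\ref{fact:mvpost}.
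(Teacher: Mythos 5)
Your plan rests on two claims, both of which are false, so the argument breaks down before the polytopal approximation is even reached.

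First, the map $v\mapsto\dim N(C_I,F(C_I,v))^\perp$ is \emph{not} upper semicontinuous, and the set of non-exposed directions is not open. Take a cylinder $C$ in $\mathbb{R}^3$ and set $C_1=C_2=C$. At an edge normal $u$ (a direction in the relative boundary of the normal cone at one of the two circular edges), $\dim N(C,F(C,u))=2$, so $\dim N(C,F(C,u))^\perp=1<2$ and $u$ is non-exposed. But perturbing $u$ into the lateral part of the sphere gives directions $v$ with $\dim N(C,F(C,v))^\perp=2$, so every neighborhood of $u$ contains exposed directions; the value of $\dim N^\perp$ \emph{jumps up} under perturbation, precisely the opposite of the behavior you describe. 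The non-exposed set here is a pair of circles in $S^2$, which is certainly not open.

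Second, and more fundamentally, the local claim you reduce to --- that each non-exposed $u$ has a neighborhood $V$ with $\SM_{C_1,\ldots,C_{n-1}}(V)=0$ --- is equivalent to saying $\supp\SM$ avoids all non-exposed directions, and this is simply false. In the cylinder example above, the surface area measure $\SM_{C,C}$ assigns positive mass to every neighborhood of the edge normal $u$ (the lateral surface contributes positive $\mathcal{H}^2$-measure), so $u\in\supp\SM_{C,C}$ even though $u$ is non-exposed. Theorem~\ref{thm:mainupper} asserts only that the non-exposed set has measure zero, which is strictly weaker than the localized statement you want. Indeed the whole point of Corollary~\ref{cor:mainupper} is that $\supp\SM$ sits only in the \emph{closure} of the exposed set --- the boundary of that closure is populated by exactly the kind of non-exposed support points your reduction would have to exclude.

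Your observation via \eqref{eq:mixedapoly} and Fact~\ref{fact:mvpost} does establish the polytopal case, and you are right that a naive polytopal approximation stalls because faces near $u$ do not converge controllably. But the paper's actual proof takes a completely different route: it rewrites the quantity as the coefficient $O(t^{n-k-1})$ of the surface area $\SM_{(K+tL)[n-1]}$ of Minkowski sums, and then proves a refinement of the Anderson--Klee theorem (Proposition~\ref{prop:klee}) showing the $k$-singular faces of $K$ can be covered by countably many Lipschitz images of $[0,1]^k$. A tube argument then bounds the area of the relevant part of $\bd(K+tL)$. This is a geometric-measure-theoretic argument on the boundary of the body, working with the whole measure zero set at once rather than one direction at a time --- there is no localization to neighborhoods of individual non-exposed directions, precisely because such localization cannot work.
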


\smallskip

This result immediately implies the following.

\begin{cor}
\label{cor:mainupper}
For any convex bodies $C_1,\ldots,C_{n-1}$ in $\mathbb{R}^n$, we have
\begin{align*}
	\supp \SM_{C_1,\ldots,C_{n-1}} 
	&\subseteq
	\cl\big\{
	u\in S^{n-1}:u\text{ is }(C_1,\ldots,C_{n-1})\text{-exposed}
	\big\} \\
	&\subseteq
	\cl\big\{
	u\in S^{n-1}:u\text{ is }(C_1,\ldots,C_{n-1})\text{-extreme}
	\big\}.
\end{align*}
\end{cor}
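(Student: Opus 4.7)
The plan is to derive both inclusions from Theorem \ref{thm:mainupper} together with a structural relation between the touching cone and the normal cone at $F(C,u)$.

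For the first inclusion, I would argue by contraposition. Suppose $u \in S^{n-1}$ does not lie in the closure of the set of $(C_1,\ldots,C_{n-1})$-exposed directions. Then there is an open neighborhood $U \subseteq S^{n-1}$ of $u$ that contains no $(C_1,\ldots,C_{n-1})$-exposed direction. Theorem \ref{thm:mainupper} then gives $\SM_{C_1,\ldots,C_{n-1}}(U) = 0$, so $u \notin \supp\SM_{C_1,\ldots,C_{n-1}}$. This is the standard definition-chasing argument for supports of measures; no extra tool is needed.

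For the second inclusion, it suffices to check that every $(C_1,\ldots,C_{n-1})$-exposed direction is $(C_1,\ldots,C_{n-1})$-extreme. The key observation is that the touching cone $T(C,u)$, by its defining property as the unique face of a normal cone of $C$ containing $u$ in its relative interior, must be a face of the smallest normal cone of $C$ containing $u$, namely $N(C,F(C,u))$. In particular $T(C,u) \subseteq N(C,F(C,u))$, so
$$
    N(C,F(C,u))^\perp \subseteq T(C,u)^\perp,
    \qquad
    \dim T(C,u)^\perp \ge \dim N(C,F(C,u))^\perp.
$$
Applying this with $C$ replaced by $C_I$ for each $I \subseteq [n-1]$, any $u$ satisfying the exposed condition $\dim N(C_I,F(C_I,u))^\perp \ge |I|$ also satisfies the extreme condition $\dim T(C_I,u)^\perp \ge |I|$, and the containment of closures follows.

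The entire content of the corollary lies in Theorem \ref{thm:mainupper}; I do not expect any serious obstacle in either step. The only point that requires some care is the assertion that $T(C,u)$ is a face of $N(C,F(C,u))$, which is a standard structural fact about the face lattice of the normal fan and should already be available from the preliminaries (the relative interiors of faces of a given normal cone partition it, and $u$ lies in the relative interior of exactly one such face, which by definition is $T(C,u)$; since $N(C,F(C,u))$ is the unique normal cone containing $u$ in its relative interior of some face, that face must be $T(C,u)$).
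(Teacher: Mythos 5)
Your proposal is correct and takes essentially the same route as the paper: the first inclusion is the standard support-of-a-measure argument applied to Theorem \ref{thm:mainupper}, and the second inclusion follows from the containment $T(C,u)\subseteq N(C,F(C,u))$, which is exactly the observation the paper makes (and which is immediate from the definition of the touching cone in \S\ref{sec:touchingcones} together with Remark \ref{rem:gentouch}).
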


\begin{proof}
By the definition of the support of a measure, the first
inclusion is equivalent to the statement that the \emph{interior} of the
set of non-$(C_1,\ldots,C_{n-1})$-exposed directions has
$\SM_{C_1,\ldots,C_{n-1}}$-measure zero. This follows trivially
from Theorem \ref{thm:mainupper}.
For the second inclusion, it suffices to note that as $T(C,u)$ is a
subset of $N(C,F(C,u))$, any $(C_1,\ldots,C_{n-1})$-exposed direction
is \emph{a fortiori} $(C_1,\ldots,C_{n-1})$-extreme.
\end{proof}

\subsubsection{Lower bound}

Our second main result fully resolves Conjecture \ref{conj:schneider} in 
$\mathbb{R}^3$. This will again follow as a consequence of a somewhat more 
general result.

\begin{thm}
\label{thm:mainlower}
For any convex bodies $K,L$ in $\mathbb{R}^n$, we have
$$
	\supp \SM_{K,\ldots,K,L} = \cl\big\{u\in S^{n-1}:
	u\text{ is }(K,\ldots,K,L)\text{-extreme}\big\}.
$$
In particular, this fully resolves Conjecture \ref{conj:schneider} in 
dimension $n=3$.
\end{thm}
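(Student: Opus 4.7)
The $\subseteq$ inclusion is immediate from Corollary \ref{cor:mainupper}, so the task is the reverse inclusion: fix $u_0 \in S^{n-1}$ that is $(K, \ldots, K, L)$-extreme, and show $u_0 \in \supp \SM_{K, \ldots, K, L}$, i.e., that every open neighborhood $U \ni u_0$ in $S^{n-1}$ satisfies $\SM_{K, \ldots, K, L}(U) > 0$.

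The plan is to proceed by polytopal approximation with preserved extremal structure. I will construct polytopes $K_\epsilon \to K$ and $L_\epsilon \to L$ in Hausdorff metric, together with directions $u_\epsilon \to u_0$, so that $\SM_{K_\epsilon, \ldots, K_\epsilon, L_\epsilon}(\{u_\epsilon\}) > 0$. Weak continuity of mixed area measures in the bodies will then yield $\SM_{K, \ldots, K, L}(U) > 0$ for every $U \ni u_0$, hence $u_0 \in \supp \SM_{K, \ldots, K, L}$. For polytopes, \eqref{eq:mixedapoly} combined with Fact \ref{fact:mvpost} reduces positivity of the atom at $u_\epsilon$ to the face-dimension conditions $\dim F(K_\epsilon, u_\epsilon) \ge n-2$ and $\dim(F(K_\epsilon, u_\epsilon) + F(L_\epsilon, u_\epsilon)) \ge n-1$.

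The extremality of $u_0$ supplies the tangent data needed to arrange these dimensions. Specifically, $\dim T(K, u_0)^\perp \ge n-2$ and $\dim T((n-2)K + L, u_0)^\perp \ge n-1$. From these I extract an $(n-2)$-dimensional subspace $V_K \subseteq T(K, u_0)^\perp$ together with a direction $v_L \in T(L, u_0)^\perp$ such that $V_K + \mathbb{R} v_L$ is an $(n-1)$-dimensional subspace of $u_0^\perp$. I then design $K_\epsilon$ and $L_\epsilon$ as polytopes agreeing with $K, L$ outside a small neighborhood of the preimage of $u_0$ under the spherical image map, while inside this neighborhood their local structure is replaced by polytopal caps: $K_\epsilon$ receives an $(n-2)$-face parallel to (a close approximation of) $V_K$ with outer normal $u_\epsilon \to u_0$, and $L_\epsilon$ receives a face at $u_\epsilon$ containing a segment in direction $v_L$. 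The face-dimension conditions then hold at $u_\epsilon$ by construction, providing the desired polytopal witness.

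The hardest part will be precisely this construction. The touching cones $T(K, u_0)$ depend on global features of $K$'s normal fan, so a generic polytopal approximation will not preserve the constraint $\dim T(K, u_0)^\perp \ge n-2$ at any nearby direction: for instance, approximating a smooth body by a polytope introduces many small low-dimensional normal cones near $u_0$, not a single large $(n-2)$-face. The construction must locally realize the $(n-2)$-dimensional tangent data $V_K$ as a genuine face of $K_\epsilon$ while maintaining Hausdorff convergence and the integrity of the touching cone structure at $u_0$ in the limit, and similarly for $L_\epsilon$. The hypothesis of only two distinct bodies (with $L$ appearing linearly) is essential here: the compatibility required at $u_\epsilon$ decouples into an $(n-2)$-dimensional condition on $K_\epsilon$ alone and a $1$-dimensional condition on $L_\epsilon$ alone, which can be arranged independently by local modifications. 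In the general form of Conjecture \ref{conj:schneider} with $n-1$ distinct bodies, the extremality conditions impose simultaneous and intertwined constraints on many touching cones that cannot obviously be decoupled, which likely explains why that conjecture remains open for $n \ge 4$.
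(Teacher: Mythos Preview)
Your approach has a genuine gap at the weak convergence step, and the gap is fatal.

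Weak convergence of mixed area measures gives, for every open $U\subseteq S^{n-1}$, only the inequality $\SM_{K,\ldots,K,L}(U)\le \liminf_\epsilon \SM_{K_\epsilon,\ldots,K_\epsilon,L_\epsilon}(U)$ (Portmanteau). Thus knowing that each approximant puts positive mass on $U$ via an atom at $u_\epsilon$ says nothing about positivity of the limit measure on $U$. Equivalently: if $\mu_\epsilon\to\mu$ weakly, $u_\epsilon\to u_0$, and $u_\epsilon\in\supp\mu_\epsilon$, it does \emph{not} follow that $u_0\in\supp\mu$ (take $\mu_\epsilon=\epsilon\delta_0+(1-\epsilon)\delta_1$ on $[0,1]$). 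You would need a uniform lower bound $\SM_{K_\epsilon,\ldots,K_\epsilon,L_\epsilon}(\{u_\epsilon\})\ge c>0$, and then work with closed balls. But by \eqref{eq:mixedapoly} that atom equals $\V_{n-1}(F(K_\epsilon,u_\epsilon)[n-2],F(L_\epsilon,u_\epsilon))$, and in your construction the cap faces must shrink as $\epsilon\to0$ to maintain Hausdorff convergence (think of $K$ smooth and strictly convex: any polytope $K_\epsilon$ within Hausdorff distance $\epsilon$ has all faces of diameter $O(\sqrt{\epsilon})$). So the atoms tend to zero and the argument collapses. This obstacle---that the polytope case does not transfer to general bodies by approximation because supports are not lower-semicontinuous under weak limits---is precisely why Schneider's conjecture has remained open despite being elementary for polytopes.

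The paper's proof is entirely different and does not use polytopal approximation. It proceeds by induction on the dimension via the projection formula. The engine is an Alexandrov--Fenchel equality argument (Lemma~\ref{lem:0extaf}): if $\dim T(K,u)=1$ and $u\notin\supp\SM_{K,C_1,\ldots,C_{n-2}}$, then $u\notin\supp\SM_{M,C_1,\ldots,C_{n-2}}$ for \emph{every} $M$, in particular for $M=[0,v]$, which via projection reduces the dimension. The subtlety is that one cannot assume $\dim T(K,u)=1$; this is handled by first replacing $K$ with a Wulff-shape modification $\hat K$ (Lemma~\ref{lem:maxbody}) that satisfies $\SM_{\hat K[n-2],L}=\SM_{K[n-2],L}$ and $\dim T(\hat K,v)>1$ near $u$, so that one can pass to $\SM_{\hat K[n-2],\hat K+L}$ and exploit $\dim T(\hat K+L,u)=1$ instead. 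The restriction to two bodies enters not through any ``decoupling'' of face conditions, but because the induction hypothesis after projection is again of the form $\SM_{K'[n-3],L'}$.
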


The works of Minkowski \cite{Min03,Min11} were set exclusively in 
$\mathbb{R}^3$, and thus Theorem \ref{thm:mainlower} resolves Minkowski's 
monotonicity problem in its original setting. The obstacle to fully 
resolving Conjecture \ref{conj:schneider} in higher dimensions will be 
explained in \S\ref{sec:higher}.

\begin{rem}
Using the methods of \cite{Sch85,Sch88,SvH23,HR24}, one can readily 
generalize Theorem~\ref{thm:mainlower} by combining
it with previously known cases; e.g., one may consider
$\SM_{K,L,C_1,\ldots,C_{n-3}}$ where
$C_1,\ldots,C_{n-3}$ are smooth.
We do not spell out such variations on Theorem~\ref{thm:mainlower} as
they do not shed any new light on Conjecture \ref{conj:schneider}.
\end{rem}

\begin{rem}
\label{rem:smoothmystery}
While Theorem \ref{thm:mainlower} considers a special case of mixed 
area measures in $\mathbb{R}^n$, we emphasize the result holds for
\emph{arbitrary} convex bodies $K,L$, in contrast to previous results
that were restricted to special classes of bodies.

On the other hand, Theorem \ref{thm:mainlower}
is new even in the special case that the support functions of $K,L$ 
are smooth. At first sight, this setting would appear to be far easier 
than the general case, as the mixed area measure has a simple explicit 
formula: if $C_1,\ldots,C_{n-1}$ have 
smooth support functions, then
$$
	d\SM_{C_1,\ldots,C_{n-1}} =
	\D_{n-1}(D^2h_{C_1},\ldots,D^2h_{C_{n-1}})\,d\omega,
$$
where $\omega$ denotes the Lebesgue measure on $S^{n-1}$, 
$D^2h_C(u)$ denotes the restriction of the Hessian $\nabla^2h_C(u)$ in 
$\mathbb{R}^n$ to $u^\perp$, and the mixed 
discriminant $\D_n(M_1,\ldots,M_n)$ of $n$-dimensional matrices $M_i$ is 
defined analogously to mixed volumes as
$$
	\det(\lambda_1M_1+\cdots+\lambda_nM_n) =
	\sum_{i_1,\ldots,i_n} \D_n(M_{i_1},\ldots,M_{i_n})\,
	\lambda_{i_1}\cdots\lambda_{i_n};
$$
see, e.g., \cite[\S 2]{SvH19}. Thus using the analogue of
Fact \ref{fact:mvpost} for mixed discriminants due to Panov
\cite[Theorem 1]{Pan85} yields the characterization
$$
	\supp \SM_{C_1,\ldots,C_{n-1}} =
	\cl\big\{ u\in S^{n-1}:
	\mathrm{rank}\big(
	D^2h_{C_I}(u) \big)\ge |I|
	\text{ for all }I\subseteq[n-1]\big\}
$$
for any convex bodies $C_1,\ldots,C_{n-1}$ in $\mathbb{R}^n$
with smooth support functions. However, this is not a satisfactory 
characterization since it is analytic rather than geometric in nature.
The problem of deducing Conjecture \ref{conj:schneider} from this
analytic description is closely related to classical problems on the 
solution of homogeneous Monge-Amp\`ere equations, which will be discussed in
\S\ref{sec:nirenberg} below. At present, Theorem \ref{thm:mainlower} 
is the only general result in this direction even in the case of smooth 
support functions.
\end{rem}

\subsection{Implications}
\label{sec:impl}

\subsubsection{Extreme and exposed directions}

An unexpected implication of Theorem \ref{thm:mainupper} is that it sheds 
light on the relation between extreme and exposed directions.

\begin{cor}
\label{cor:exposed}
If Conjecture \ref{conj:schneider} holds for given 
convex bodies $C_1,\ldots,C_{n-1}$ in $\mathbb{R}^n$, then every 
$(C_1,\ldots,C_{n-1})$-extreme direction $u\in S^{n-1}$ is a limit
$u=\lim_k u_k$ of
$(C_1,\ldots,C_{n-1})$-exposed directions $u_k\in S^{n-1}$.
\end{cor}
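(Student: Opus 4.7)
The plan is very short: the corollary follows from squeezing two closures between the chain of inclusions already established in Corollary \ref{cor:mainupper}. Concretely, Corollary \ref{cor:mainupper} (which itself is a direct consequence of Theorem \ref{thm:mainupper}) supplies the two inclusions
$$
\supp \SM_{C_1,\ldots,C_{n-1}}
\;\subseteq\;
\cl\big\{u\in S^{n-1}:u\text{ is }(C_1,\ldots,C_{n-1})\text{-exposed}\big\}
\;\subseteq\;
\cl\big\{u\in S^{n-1}:u\text{ is }(C_1,\ldots,C_{n-1})\text{-extreme}\big\},
$$
where the second inclusion is because $T(C,u)\subseteq N(C,F(C,u))$ implies $N(C,F(C,u))^\perp\subseteq T(C,u)^\perp$, so exposedness is a formally stronger condition than extremality.

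The hypothesis is precisely that the leftmost set equals the rightmost set: Conjecture \ref{conj:schneider} holding for $C_1,\ldots,C_{n-1}$ asserts
$$
\supp \SM_{C_1,\ldots,C_{n-1}} = \cl\big\{u\in S^{n-1}:u\text{ is }(C_1,\ldots,C_{n-1})\text{-extreme}\big\}.
$$
Combined with the displayed chain, all three closed sets must coincide. In particular the set of exposed directions and the set of extreme directions share the same closure, which is exactly the assertion that every extreme direction can be approximated by exposed directions. There is no real obstacle here: once Theorem \ref{thm:mainupper} is in hand, the argument is a one-line squeeze, so the content of the corollary really lies in the upper bound proved earlier in the paper.
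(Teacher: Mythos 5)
Your proposal is correct and is essentially the paper's own argument: invoke the two-step inclusion from Corollary \ref{cor:mainupper} (equivalently Theorem \ref{thm:mainupper}), note that Conjecture \ref{conj:schneider} makes the two ends of the chain equal, and conclude that the closures of the exposed and extreme directions coincide.
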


\begin{proof}
Theorem \ref{thm:mainupper} and
Conjecture \ref{conj:schneider} imply that the closures of the sets
of $(C_1,\ldots,C_{n-1})$-extreme and $(C_1,\ldots,C_{n-1})$-exposed
directions coincide.
\end{proof}

Thus, for example, Theorem \ref{thm:mainlower} implies that
$$
	\cl\big\{
	u\in S^{n-1}:u\text{ is }(K,L)\text{-extreme}
	\big\} 
	=	
	\cl\big\{
	u\in S^{n-1}:u\text{ is }(K,L)\text{-exposed}
	\big\} 
$$
for any convex bodies $K,L$ in $\mathbb{R}^3$. 

Another special case of Corollary \ref{cor:exposed} gives a new proof of 
the following result.
A vector $u\in S^{n-1}$ is called an \emph{$r$-extreme 
normal direction} of $K$ if $\dim T(K,u)\le r+1$, and
is called an \emph{$r$-exposed normal direction} if $\dim N(K,F(K,u))\le 
r+1$. It is clear that these definitions are equivalent to 
$u$ being $(K[n-r-1],B[r])$-extreme and $(K[n-r-1],B[r])$-exposed,
respectively, where $C[k]$ denotes that $C$ is repeated $k$ times.
Since Conjecture \ref{conj:schneider} holds in this setting
\cite{Sch75}, it follows that any $r$-extreme normal direction of a convex 
body is the limit of $r$-exposed normal directions. This fact was 
previously established by a direct argument in \cite[Theorem 
2.2.9]{Sch14}.

\subsubsection{Mixed Hessian measures}

We now describe an analogue of mixed area measures for convex functions 
rather than convex bodies. Such measures are readily obtained \cite{HMU24} 
by polarizing the Monge-Amp\`ere measure \cite[Chapter 1]{Gut16}, and have 
been investigated (in a somewhat more general setting) by Trudinger and 
Wang \cite{TW02} as part of their study of nonlinear Dirichlet problems.

In the following, we fix an open convex set 
$\Omega\subseteq\mathbb{R}^n$, and denote by $\mathrm{Conv}(\Omega)$
the set of all proper convex functions $f$ on $\mathbb{R}^n$ with
$\Omega\subseteq\mathop{\mathrm{dom}} f$. If 
$f_1,\ldots,f_n\in\mathrm{Conv}(\Omega)$ are smooth, their 
\emph{mixed Hessian measure} is the measure on $\Omega$
with density
$$
	\frac{d\HH_{f_1,\ldots,f_n}}{dx} =
	\D_n(\nabla^2f_1,\ldots,\nabla^2f_n)	
$$
with respect to the Lebesgue measure. The definition of 
$\HH_{f_1,\ldots,f_n}$ can be extended to arbitrary 
$f_1,\ldots,f_n\in\mathrm{Conv}(\Omega)$ by continuity 
\cite[Theorem 2.4]{TW02}.

For any $f\in\mathrm{Conv}(\Omega)$ and $x\in\Omega$, denote by $L(f,x)$ 
the largest convex subset of $\Omega$ on which $f$ is affine and that has 
$x$ in its relative interior. The set $L(f,x)$ is the analogue for convex 
functions of the notion of a touching cone for convex bodies (indeed, the 
support function $h_C$ is affine on every normal cone of $C$; for the 
precise relation between mixed Hessian measures and mixed area measures, 
see \cite[Corollary 4.2]{HMU24}). In the following, we let $\bar 
L(f,x)=\mathrm{span}\{L(f,x)-x\}$.

We can now transcribe Conjecture \ref{conj:schneider} to the 
setting of mixed Hessian measures. 

\begin{defn}
\label{defn:fcnextreme}
Let $f_1,\ldots,f_n\in\mathrm{Conv}(\Omega)$. Then a point $x\in\Omega$
is called \emph{$(f_1,\ldots,f_n)$-extreme} if
$\dim\big(\bar L(f_I,x)^\perp \big)\ge |I|$
for all $I\subseteq[n]$.
\end{defn}

\begin{conj}
\label{conj:fcnschneider}
Let $f_1,\ldots,f_n\in\mathrm{Conv}(\Omega)$. Then
$$
	\supp \HH_{f_1,\ldots,f_{n}} =
	\cl\big\{
	x\in\Omega:x\text{ is }(f_1,\ldots,f_{n})\text{-extreme}
	\big\}.
$$
\end{conj}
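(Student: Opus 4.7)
The plan is to transfer Theorems \ref{thm:mainupper} and \ref{thm:mainlower} from mixed area measures to mixed Hessian measures via the correspondence of \cite[Corollary 4.2]{HMU24}, which identifies a mixed Hessian measure locally with a mixed area measure. Concretely, given $f_1,\ldots,f_n\in\mathrm{Conv}(\Omega)$ and $x_0\in\Omega$, one produces convex bodies $C_1,\ldots,C_n$ in $\mathbb{R}^{n+1}$ and a Gauss-type map $\gamma$ from a neighborhood of $x_0$ into an open hemisphere of $S^n$, so that the measures $\HH_{f_1,\ldots,f_n}$ and $\SM_{C_1,\ldots,C_n}$ correspond (up to a strictly positive density) along $\gamma$. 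Informally, every convex function on an open set is, in a suitable chart, the support function of a convex body.

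The first step is to check that Definitions \ref{defn:fcnextreme} and \ref{defn:extreme} correspond through $\gamma$. The affinity set $L(f_i,x)$ lifts to a flat face of the epigraph of $f_i$; its polar, read off in the chart, is the touching cone $T(C_i,\gamma(x))$. A direct dimension count — the shift by one in the ambient dimension is absorbed by the passage from a cone in $\mathbb{R}^{n+1}$ to its trace on $S^n$ — yields $\dim\bar L(f_I,x)^\perp = \dim T(C_I,\gamma(x))^\perp$ for every $I\subseteq[n]$. Thus $x$ is $(f_1,\ldots,f_n)$-extreme if and only if $\gamma(x)$ is $(C_1,\ldots,C_n)$-extreme (and similarly for exposedness in the sense of Definition \ref{defn:exposed}).

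With this dictionary, Conjecture \ref{conj:fcnschneider} becomes, locally, an instance of Conjecture \ref{conj:schneider} in dimension $n+1$. The upper inclusion then follows from Theorem \ref{thm:mainupper}: the support of $\HH_{f_1,\ldots,f_n}$ corresponds under $\gamma$ to the support of $\SM_{C_1,\ldots,C_n}$, which is contained in the closure of the exposed, hence extreme, directions. The lower inclusion is obtained analogously from Theorem \ref{thm:mainlower} in the special case $\HH_{f,\ldots,f,g}$; consequently, the full Conjecture \ref{conj:fcnschneider} is resolved when $n=2$ (i.e., $\Omega\subseteq\mathbb{R}^2$), matching the dimension in which Conjecture \ref{conj:schneider} is fully known.

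The main obstacle is inherited from the body setting: the lower inclusion in Conjecture \ref{conj:fcnschneider} for $n\ge 3$ with $n$ distinct convex functions is, via the correspondence, equivalent to the corresponding open case of Conjecture \ref{conj:schneider} in dimension $n+1\ge 4$, which is precisely the obstruction flagged in \S\ref{sec:higher}. An independent resolution on the function side would likely require exploiting some genuinely function-theoretic structure — for instance, that convex functions on $\Omega$ are not constrained to be sublinear, permitting localization or affine modifications unavailable for support functions — but no such ingredient is currently in sight.
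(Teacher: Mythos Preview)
Your proposal does not prove Conjecture~\ref{conj:fcnschneider}; but neither does the paper, since this is an open conjecture. What you have written is, in substance, the paper's own derivation of the partial results in Corollary~\ref{cor:mixedhess} together with the reduction in Corollary~\ref{cor:equivconj}, and your final paragraph correctly identifies that the remaining lower inclusion for $n\ge 3$ is exactly the open part of Conjecture~\ref{conj:schneider} in dimension $n+1$. So the approach and the conclusions match the paper.

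Two small points of precision. First, the relevant result in \cite{HMU24} for the direction you need (mixed Hessian measure as a mixed area measure in one higher dimension) is Corollary~4.9, not Corollary~4.2; the latter goes the other way, expressing mixed area measures as mixed Hessian measures. Second, that correspondence only applies to functions in $\mathrm{Conv}_{\rm cd}^*(\mathbb{R}^n)$, not to arbitrary $f\in\mathrm{Conv}(\Omega)$, so your phrase ``locally'' hides a genuine step: one must first show that any $f\in\mathrm{Conv}(\Omega)$ agrees on a neighborhood of a given point with some $g\in\mathrm{Conv}_{\rm cd}^*(\mathbb{R}^n)$ (the paper does this in Lemma~\ref{lem:loc}), and that both $\supp\HH$ and the extremality condition are local. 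With those details supplied, your dictionary between $\bar L(f_I,x)$ and $T(K_{f_I},\phi(x))$ is exactly Lemma~\ref{lem:lftkf} and Corollary~\ref{cor:hessext}, and the rest goes through as you say.
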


\smallskip

It can be shown using results of Hug--Mussnig--Ulivelli \cite{HMU24} that
Conjecture \ref{conj:schneider} and Conjecture \ref{conj:fcnschneider} are
in fact equivalent; see \S\ref{sec:mixedhess}. By the same technique, we 
can readily transcribe the main results of this paper to mixed Hessian 
measures.

\begin{cor}
\label{cor:mixedhess}
For any $f_1,\ldots,f_n\in\mathrm{Conv}(\Omega)$, we have
$$
	\supp \HH_{f_1,\ldots,f_{n}} \subseteq
	\cl\big\{
	x\in\Omega:x\text{ is }(f_1,\ldots,f_{n})\text{-extreme}
	\big\}.
$$
Moreover, for any $f,g\in\mathrm{Conv}(\Omega)$, we have
$$
	\supp \HH_{f,\ldots,f,g} =
	\cl\big\{
	x\in\Omega:x\text{ is }(f,\ldots,f,g)\text{-extreme}
	\big\}.	
$$
In particular, this fully resolves Conjecture \ref{conj:fcnschneider}
in dimension $n=2$.
\end{cor}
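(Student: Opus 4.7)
The plan is to transfer Corollary \ref{cor:mainupper} and Theorem \ref{thm:mainlower} from mixed area measures to mixed Hessian measures via the correspondence developed by Hug--Mussnig--Ulivelli \cite{HMU24}. To each $f \in \mathrm{Conv}(\Omega)$ one associates a convex body $C^f$ (roughly, a compactified version of the epigraph viewed inside $\mathbb{R}^{n+1}$) whose support function encodes $f$. By \cite[Corollary 4.2]{HMU24}, the mixed Hessian measure $\HH_{f_1,\ldots,f_n}$ agrees locally with the push-forward of the mixed area measure $\SM_{C^{f_1},\ldots,C^{f_n}}$ under a suitable chart $x \mapsto u(x)$ from $\Omega$ into an open subset of $S^n$ given by a gradient map.

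The first step is to verify that this chart intertwines the relevant geometric notions: the affinity set $L(f,x)$ corresponds (modulo the distinguished ``vertical'' direction introduced by the homogenization) to the touching cone $T(C^f,u(x))$, and likewise $F(C^f,u(x))$ to the analogous exposed object for $f$ at $x$. Consequently, $x$ is $(f_1,\ldots,f_n)$-extreme in the sense of Definition \ref{defn:fcnextreme} if and only if $u(x)$ is $(C^{f_1},\ldots,C^{f_n})$-extreme in the sense of Definition \ref{defn:extreme}, with the analogous equivalence for the exposed notion. This dictionary is precisely what is needed to establish the equivalence of Conjectures \ref{conj:schneider} and \ref{conj:fcnschneider}, alluded to in the text and to be made precise in \S\ref{sec:mixedhess}.

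With the dictionary in hand, both assertions follow by pulling back the main theorems through the chart. For the first, Theorem \ref{thm:mainupper} gives that the set of non-$(C^{f_1},\ldots,C^{f_n})$-exposed directions is $\SM_{C^{f_1},\ldots,C^{f_n}}$-null, which translates into the claimed inclusion once one composes with the inclusion of exposed into extreme (Corollary \ref{cor:mainupper}). For the second, Theorem \ref{thm:mainlower} applies directly with $K = C^f$ and $L = C^g$, giving the exact equality. In dimension $n=2$, every mixed Hessian measure $\HH_{f_1,f_2}$ is already of the form $\HH_{f,g}$, so Conjecture \ref{conj:fcnschneider} is resolved in full.

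The main technical point, while not a conceptual obstacle, is to ensure that the vertical direction introduced by the homogenization does not contaminate either the dimension counts in Definitions \ref{defn:extreme} and \ref{defn:fcnextreme} or the measure-theoretic push-forward relating $\HH_{f_1,\ldots,f_n}$ to $\SM_{C^{f_1},\ldots,C^{f_n}}$. This is precisely what the construction of \cite{HMU24} is engineered to control, so beyond invoking their framework the argument amounts to careful bookkeeping between the two settings.
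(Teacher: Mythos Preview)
Your proposal is correct and follows essentially the same route as the paper: reduce to the mixed-area setting via the epigraph construction of \cite{HMU24}, establish the dictionary between $(f_1,\ldots,f_n)$-extreme points and $(K_{f_1},\ldots,K_{f_n})$-extreme directions, and pull back Corollary~\ref{cor:mainupper} and Theorem~\ref{thm:mainlower}. Two small corrections: the relevant result is \cite[Corollary~4.9]{HMU24} (not 4.2, which encodes the opposite direction of the correspondence), and since the body $K_f$ is only defined for $f\in\mathrm{Conv}_{\rm cd}^*(\mathbb{R}^n)$, one first needs a localization step (Lemma~\ref{lem:loc} in the paper) to reduce from general $f\in\mathrm{Conv}(\Omega)$ to that class before the dictionary can be applied.
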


\subsubsection{A mixed Hartman--Nirenberg--Pogorelov theorem}
\label{sec:nirenberg}

A classical fact that dates back to Monge \cite[\S 23.7]{Kli90}, and in 
more precise form to Hartman and Nirenberg \cite{HN59} and Pogorelov 
\cite[\S IX.4]{Pog73}, is that a surface with vanishing Gauss curvature is 
a ruled surface: it is foliated by straight lines and planar regions.

More concretely, let $f:D\to\mathbb{R}$ be a smooth (not necessarily 
convex) function on $D\subset\mathbb{R}^2$. The graph of $f$ defines a 
surface in $\mathbb{R}^3$, whose Gauss curvature vanishes if
\begin{equation}
\label{eq:homomonge}
	\det(\nabla^2 f)=0\quad\text{on}\quad D,
\end{equation}
that is, if $f$ it a solution of the homogeneous Monge-Amp\`ere equation
on $D$. That $\det(\nabla^2f(x))=0$ means that $\nabla^2f(x)$ has a 
nontrivial kernel---that is, the second derivative of $f$ vanishes in 
some direction---but this local condition does not in itself imply that 
$f$ must be affine in that direction. It is a nontrivial fact that when 
$\det(\nabla^2 f)$ vanishes in an open domain $D$, this local condition 
can be integrated to yield the global property that $D$ is foliated by 
regions on which $f$ is affine. The following formulation is due 
to Hartman and Nirenberg \cite[\S 3]{HN59}.

\begin{thm}[Hartman--Nirenberg]
\label{thm:hn}
Let $D\subset\mathbb{R}^2$ be an open connected set,
let $f:D\to\mathbb{R}$ be of class $C^2$, and suppose that 
\eqref{eq:homomonge} holds. Let
$$
	R = \{x\in D: \nabla^2f=0\text{ in a neighborhood of }x\}.
$$
Then for every $x\in D\backslash R$, there is an 
affine line $x\in L\subset\mathbb{R}^2$ so that $f$ is affine on
the connected component $I$ of $L\cap D$ that contains $x$, and 
$I\cap R=\varnothing$.
\end{thm}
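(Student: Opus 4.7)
The plan is to follow the classical strategy via the null direction field of $\nabla^2 f$. Let $\Omega := \{y \in D : \nabla^2 f(y) \neq 0\}$, which is open; since $R$ is open and no point of $D \setminus R$ admits an $\Omega$-free neighborhood, we have $D \setminus R \subseteq \overline{\Omega}$. As $\det \nabla^2 f \equiv 0$, the Hessian has rank exactly one on $\Omega$, so its kernel yields a continuous line field $\mathbb{R}\,\xi(y)$. If $\gamma$ is a $C^1$ curve in $\Omega$ with $\gamma' \parallel \xi\circ\gamma$, then
\begin{equation*}
\frac{d}{dt}\nabla f(\gamma(t)) = \nabla^2 f(\gamma(t))\,\gamma'(t) = 0,
\end{equation*}
so $\nabla f \equiv p$ is constant along $\gamma$, and therefore $f(\gamma(t)) = p\cdot\gamma(t) + c$ for constants $p,c$. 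Thus $f$ is affine along every integral curve of $\xi$.

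The main step, and the principal obstacle, is to upgrade integral curves of the merely continuous line field $\xi$ to \emph{honest straight line segments}. Fix $y_0 \in \Omega$, set $\xi_0 := \xi(y_0)$, $p := \nabla f(y_0)$, $c := f(y_0) - p\cdot y_0$, and let $J$ be the maximal interval around $0$ such that $\{y_0 + \tau\xi_0 : \tau \in J\}$ lies in $D$ and $f(y_0+\tau\xi_0) = p\cdot(y_0+\tau\xi_0)+c$ throughout. Continuity of $f$ makes $J$ closed; the crux is openness. At an endpoint $y_1 = y_0+\tau_1\xi_0$ with $\nabla^2 f(y_1) \neq 0$, continuity of $\nabla f$ on the known segment forces $\nabla f(y_1) = p$, hence $\xi_0 \in \ker\nabla^2 f(y_1) = \mathbb{R}\,\xi(y_1)$, so the null direction at $y_1$ coincides up to sign with $\xi_0$. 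In local coordinates adapted so that $\xi_0$ is the second axis and $f_{11}(y_1) \neq 0$, the rank-one identity $f_{11}f_{22} = f_{12}^2$ combined with the boundary condition $\nabla f \equiv p$ along the preceding segment propagates the affine identity past $y_1$ via a local analytic argument. This is the technical heart of the proof and must exploit the global PDE directly, since $\xi$ is only continuous and standard ODE uniqueness is unavailable. Endpoints with $\nabla^2 f(y_1) = 0$ are handled either trivially in the case $y_1 \in R$ (where $f$ is affine on a full $2$-dimensional neighborhood of $y_1$, with linear part necessarily $p$ by continuity) or by limiting from $\Omega$-points on $L$ approaching $y_1$.

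For $y_0 \in (D\setminus R)\setminus\Omega$, take $y_k \to y_0$ in $\Omega$, extract a convergent subsequence $\xi(y_k) \to \xi_\infty$, and use continuity of $\nabla f$ to obtain a straight segment through $y_0$ in direction $\xi_\infty$ on which $f$ is affine. To enforce $I \cap R = \varnothing$: any point of $I \cap R$ lies in a maximal $2$-dimensional open subset $\mathcal{A} \subseteq R$ on which $f = p\cdot x + c$, and since $y_0 \notin R$ one has $y_0 \in \partial\mathcal{A}$; choosing $L$ along a supporting direction of $\mathcal{A}$ at $y_0$ (which, by the rigidity of the affine structure established above, coincides with the limiting null direction at $y_0$) keeps the connected component through $y_0$ outside $R$.
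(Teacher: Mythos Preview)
The paper does not prove Theorem~\ref{thm:hn}; it is quoted as a classical result of Hartman and Nirenberg \cite[\S 3]{HN59}, so there is no ``paper's own proof'' to compare against. I therefore assess your argument on its own terms.

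Your outline follows the classical route, but the central step is not carried out and contains a logical slip. You define $J$ as the maximal interval on which $f(y_0+\tau\xi_0)=p\cdot(y_0+\tau\xi_0)+c$, and then assert that ``continuity of $\nabla f$ on the known segment forces $\nabla f(y_1)=p$''. This does not follow: the affine identity along the segment determines only the directional derivative $\partial_{\xi_0} f$, not the full gradient. Your earlier computation showing $\nabla f$ is constant applied to \emph{integral curves} of $\xi$, not to the straight segment $\tau\mapsto y_0+\tau\xi_0$; conflating the two is precisely the point at issue, since what must be proved is that integral curves \emph{are} straight. Without $\nabla f(y_1)=p$ you cannot conclude $\xi_0\in\ker\nabla^2 f(y_1)$, and the extension argument collapses. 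You acknowledge that the propagation ``via a local analytic argument'' is the technical heart, but no such argument is supplied; the phrase ``the rank-one identity $f_{11}f_{22}=f_{12}^2$ combined with the boundary condition \ldots\ propagates the affine identity'' is a description of what one hopes happens, not a proof. In Hartman--Nirenberg this step is handled by first showing that connected components of the level sets $\{\nabla f=p\}\cap\Omega$ are $C^1$ curves tangent to $\xi$, and then proving these curves are straight by a separate argument exploiting the PDE; you would need to reproduce something of this kind.

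The final paragraph has a further gap: you take $\mathcal{A}\subseteq R$ to be a ``maximal $2$-dimensional open subset'' on which $f$ is affine and then speak of a ``supporting direction of $\mathcal{A}$ at $y_0$''. Connected components of $R$ need not be convex, so the notion of supporting direction is undefined, and the claim that the limiting null direction at $y_0$ coincides with any such direction is unsubstantiated. The correct argument that $I\cap R=\varnothing$ in \cite{HN59} uses that distinct straight-line leaves of the foliation of $\Omega$ cannot cross, which forces the segment through $x$ to stay in $\overline{\Omega}\setminus R$; this requires the straightness already to be in hand.
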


It is clear that $f$ is affine on every connected component of the open 
set $R$; these are the planar regions of the surface defined by $f$. 
Theorem \ref{thm:hn} states that the non-planar part of the surface is 
foliated by lines that extend to the boundary. Such a result for 
non-smooth surfaces is due to Pogorelov \cite[\S IX.4]{Pog73}.

The problems investigated in this paper may be viewed as mixed analogues 
of these classical results. More concretely, given two smooth functions 
$f:D\to\mathbb{R}$ and $g:D\to\mathbb{R}$ on a domain 
$D\subseteq\mathbb{R}^2$, we aim to characterize when
\begin{equation}
\label{eq:mixedmonge}
	\D_2(\nabla^2f,\nabla^2g) = 0 \quad\text{on}\quad D.
\end{equation}
When $f=g$, this is precisely \eqref{eq:homomonge}. However, 
the case $f\ne g$ can be of a very different nature, as
$\D_2(M_1,M_2)=0$ need not have any implication
for the kernels of $M_1,M_2$: for example, \eqref{eq:mixedmonge} holds
when $f(x)=\|x\|^2$ and $g$ is any harmonic function on $D$, neither of 
which define ruled surfaces.

When $M_1,M_2$ are positive semidefinite, however, their kernels do 
characterize when $\D_2(M_1,M_2)=0$: this holds if and only if 
$M_1=0$, or $M_2=0$, or $\ker M_1=\ker M_2$ with $\dim\ker M_i=1$ 
\cite{Pan85}. Therefore, by analogy with the local-to-global phenomenon 
captured by Theorem \ref{thm:hn}, it is natural to conjecture that 
\eqref{eq:mixedmonge} has the following solution for \emph{convex} $f,g$: 
each $x\in D$ is either contained in a planar region of $f$ or of $g$, or 
in an affine line on which $f$ and $g$ are simultaneously affine; see 
Figure \ref{fig:hn}. That this is the case is a consequence of 
Theorem \ref{thm:mainlower}.

\begin{cor}
\label{cor:mixedhn}
Let $\Omega\subseteq\mathbb{R}^2$ be an open convex set and
$D\subseteq\Omega$ be an open connected set.
Let $f,g\in\mathrm{Conv}(\Omega)$ be of class $C^2$, and suppose that 
\eqref{eq:mixedmonge} holds. Let
\begin{align*}
	R = 
	&\{x\in D: \nabla^2f=0\text{ in a neighborhood of }x\} 
	\cup \mbox{}
\\	&\{x\in D: \nabla^2g=0\text{ in a neighborhood of }x\}.
\end{align*}
Then for every $x\in D\backslash R$, there is an affine line $x\in 
L\subset\mathbb{R}^2$ so that $f$ and $g$ are both affine on the connected
component $I$ of $L\cap D$ that contains $x$, and $I\cap R=\varnothing$.
\end{cor}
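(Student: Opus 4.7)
The plan is to deduce the conclusion from the mixed Hessian analogue of Theorem~\ref{thm:mainlower} supplied by Corollary~\ref{cor:mixedhess}. For smooth $f,g$, the hypothesis \eqref{eq:mixedmonge} is exactly the statement that the Lebesgue density of $\HH_{f,g}$ vanishes throughout $D$, so $\HH_{f,g}|_D=0$; the equality case of Corollary~\ref{cor:mixedhess} at $n=2$ then gives $\supp\HH_{f,g}=\cl\{y\in\Omega:y\text{ is }(f,g)\text{-extreme}\}$, so no point of $D$ is $(f,g)$-extreme. Unpacking Definition~\ref{defn:fcnextreme} at $n=2$, each $y\in D$ must satisfy one of: $y\in R_f$ (which for smooth $f$ is equivalent to $\dim\bar L(f,y)=2$), $y\in R_g$, or $\dim\bar L(f+g,y)\ge 1$. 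For $x\in D\setminus R$ the first two are excluded; and $\dim\bar L(f+g,x)=2$ would force both $\nabla^2 f$ and $\nabla^2 g$ to vanish on a neighborhood by positive semidefiniteness, placing $x\in R$---contradiction. Hence $L(f+g,x)$ is a nondegenerate $1$-segment through $x$ (with $x$ in its relative interior) of some direction $e$. A standard chord-wise convexity argument ($f,g$ convex with $f+g$ affine on a convex set forces equality in each of the two convexity bounds along every chord) then shows that each of $f$ and $g$ is individually affine on $L(f+g,x)$.

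To globalize, set $L=x+\mathbb{R}e$ and $I=(L\cap D)_x$, and let $J\subseteq I$ be the maximal connected subsegment containing $x$ with $J\subseteq D\setminus R$ and both $f,g$ affine on $J$. If $J\ne I$, an endpoint $y^*\in I\setminus J$ lies in $D\setminus R$ (since $R$ is open, $D\setminus R$ is closed in $D$), and $f,g$ extend affinely to $J\cup\{y^*\}$ by continuity. Applying the previous paragraph at $y^*$ produces a $1$-segment $L(f+g,y^*)$ of some direction $e^*$; vanishing of the directional second derivative of $f+g$ at $y^*$ along both $e$ (from $J$) and $e^*$ (from $L(f+g,y^*)$), combined with positive semidefiniteness of $\nabla^2(f+g)(y^*)$, places both vectors in $\ker\nabla^2(f+g)(y^*)$. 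When $\nabla^2(f+g)(y^*)\ne 0$ the kernel is $1$-dimensional, forcing $e^*$ to be proportional to $e$, so $L(f+g,y^*)$ strictly extends $J$ along $L$ and contradicts maximality of $J$.

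The main obstacle is the degenerate case $\nabla^2(f+g)(y^*)=0$, where the local kernel is $2$-dimensional and no longer distinguishes $e^*$ from $e$. I would resolve it by a symmetric convexity spreading argument. Suppose for contradiction that $e$ and $e^*$ are linearly independent. For small $t$ with $y^*+te\in J$ and any $s$ with $|s|<s_0$, consider the triangle with vertices $y^*$, $y^*+te$, $y^*+se^*$: by hypothesis $f+g$ is affine on two of its edges, and matching the convexity upper bound against the subgradient inequality at $y^*$ forces $f+g$ to coincide with its first-order Taylor expansion at $y^*$ throughout the triangle. Since this Taylor expansion does not depend on $s$, the union of these triangles over $s\in(-s_0,s_0)$ is a two-dimensional ``kite'' region $T$ on which $f+g$ is affine, and the midpoint $y^*+(t/2)e\in J$ lies in the topological interior of $T$. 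On this interior $\nabla^2(f+g)\equiv 0$, hence by positive semidefiniteness $\nabla^2 f\equiv\nabla^2 g\equiv 0$ on a neighborhood of $y^*+(t/2)e$, placing this point in $R_f\cap R_g\subseteq R$---contradicting $J\subseteq D\setminus R$. Thus $e^*$ must be proportional to $e$ in every case, so maximality of $J$ forces $J=I$, and the resulting line $L$ satisfies $I\cap R=\varnothing$ as claimed.
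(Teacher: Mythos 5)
Your proof is correct, but it takes a genuinely different route from the paper. The paper does not prove Corollary~\ref{cor:mixedhn} directly: it first establishes the nonsmooth statement, Corollary~\ref{cor:mixedhnnonsm}, and notes that the smooth corollary is an immediate specialization. The proof of Corollary~\ref{cor:mixedhnnonsm} is a short face-structure argument: after using Corollary~\ref{cor:mixedhess} to see that no point of $D$ is $(f,g)$-extreme, and hence that $L(f,x)$ and $L(g,x)$ are one-dimensional with the same direction for $x\in D\setminus R$, the paper shows directly that the whole interval $I$ is contained in $L(f,x)\cap L(g,x)$. The point is that if some $y\in I$ lies on the relative boundary of $L(f,x)\cap L(g,x)$, then the functional analogue of Lemma~\ref{lem:touchingface} forces $\min\{\dim L(f,y),\dim L(g,y)\}=0$ and $\max\{\dim L(f,y),\dim L(g,y)\}\le 1$, making $y$ an $(f,g)$-extreme point---a contradiction. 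The same touching-face lemma then immediately yields $L(f,y)=L(f,x)$ and $L(g,y)=L(g,x)$ for all $y\in I$, whence $I\cap R=\varnothing$.

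Your argument instead runs a continuation/maximality scheme in the smooth category: you take the maximal subinterval $J\ni x$ on which $f,g$ are affine and which avoids $R$, and show $J=I$ by extending past an endpoint $y^*$, using that $\ker\nabla^2(f+g)(y^*)$ contains both the old direction $e$ and the new direction $e^*$, and a nice ``kite'' argument (union of triangles with two affine edges emanating from $y^*$) in the degenerate case $\nabla^2(f+g)(y^*)=0$. This works, but it buys you less: it leans on $C^2$ smoothness (tangent-plane inequality, Hessian kernels, Taylor expansion) and therefore cannot directly yield the nonsmooth Corollary~\ref{cor:mixedhnnonsm}, whereas the paper's one-line face argument handles both at once. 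Two small gaps you should tidy up: (i) since $D\setminus R$ is closed in $D$ and affinity passes to the closure, the maximal $J$ in fact contains its endpoints, so the endpoint $y^*$ lies in $J$, not $I\setminus J$---the continuation argument still applies at $y^*$, but the bookkeeping as written is slightly off; (ii) you should verify that the initial segment $\relint L(f+g,x)\cap I$ and each extension actually lie in $D\setminus R$, which again follows from the touching-face lemma (for $y\in\relint L(f,x)$ one has $L(f,y)=L(f,x)$, so $\dim L(f,y)=1$) but is not spelled out in your write-up.
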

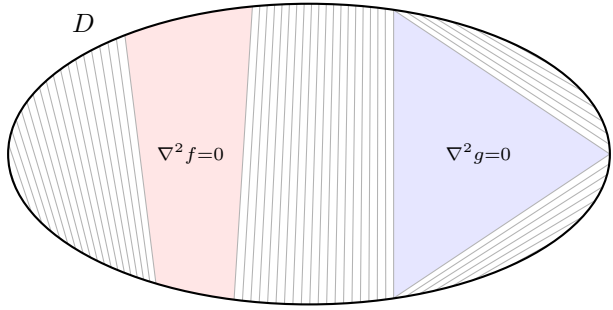
\begin{figure}
\centering
\begin{tikzpicture}

\begin{scope}
\clip (0,0) ellipse (4cm and 2cm);

\begin{scope}
\clip (1.125,-2) -- (1.125,2) -- (4,0);
\fill[color=blue!10!white] (1,-2) -- (4,0) --  (1,2);
\end{scope}

\draw[color=black!30!white] (1.125,-2) to (1.125,2);
\draw[color=black!30!white] (1,-2) to (4,0);
\draw[color=black!30!white] (1,2) to (4,0);

\fill[color=red!10!white] (-2,-2) -- (-2.5,2)--(-0.75,2)--(-1,-2);

\draw[color=black!30!white] (-2,-2) to (-2.5,2);
\draw[color=black!30!white] (-1,-2) to (-0.75,2);

\foreach \r in {1,...,30}{
    \draw[color=black!30!white] (-2-0.075*\r,-2) to (-2.5-0.125*\r,2);
}

\foreach \r in {1,...,18}{
    \draw[color=black!30!white] (-1+0.11225*\r,-2) to (-0.75+0.098*\r,2);
}

\foreach \r in {1,...,10}{
    \draw[color=black!30!white] (1+0.125*\r,-2) to (4,-0.125*\r);
}

\foreach \r in {1,...,10}{
    \draw[color=black!30!white] (1+0.125*\r,2) to (4,0.125*\r);
}

\end{scope}

\draw (-1.575,0) node {$\scriptstyle\nabla^2f=0$};
\draw (2.25,0) node {$\scriptstyle\nabla^2g=0$};

\draw[thick] (0,0) ellipse (4cm and 2cm);

\draw (-3,1.75) node {$D$};

\end{tikzpicture}
\caption{Illustration of of Corollary \ref{cor:mixedhn}. Outside the
planar regions of $f$ and $g$, the domain $D$ is foliated by lines on 
which $f$ and $g$ are simultaneously affine.
\label{fig:hn}}
\end{figure}

We have formulated Corollary \ref{cor:mixedhn} for smooth convex functions 
to emphasize the analogy with Theorem \ref{thm:hn}. However, the result is 
an immediate consequence of the following result in the nonsmooth case.

\begin{cor}
\label{cor:mixedhnnonsm}
Let $\Omega\subseteq\mathbb{R}^2$ be an open convex set and
$D\subseteq\Omega$ be an open connected set.
Let $f,g\in\mathrm{Conv}(\Omega)$ satisfy $\HH_{f,g}(D)=0$, and define
$$
	R = \{x\in D: \dim L(f,x)=2\text{ or }\dim L(g,x)=2\}.
$$
Then for every $x\in D\backslash R$, there is an affine line $x\in
L\subset\mathbb{R}^2$ so that $f$ and $g$ are both affine on the connected
component $I$ of $L\cap D$ that contains $x$, and $I\cap R=\varnothing$.
\end{cor}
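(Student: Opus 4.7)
The plan is to deduce the corollary from the characterization of $\supp \HH_{f,g}$ provided by Corollary \ref{cor:mixedhess} in dimension $n = 2$, via an open-and-closed argument along the line $L$. The hypothesis $\HH_{f,g}(D) = 0$ together with openness of $D$ gives $D \cap \supp \HH_{f,g} = \varnothing$. By Corollary \ref{cor:mixedhess}, $\supp \HH_{f,g}$ equals the closure in $\Omega$ of the set of $(f,g)$-extreme points, so each $y \in D$ has an open neighborhood in $\Omega$ containing no $(f,g)$-extreme points at all.

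For $x \in D \setminus R$, I would unpack Definition \ref{defn:fcnextreme}: since $x \notin R$, $\dim L(f,x), \dim L(g,x) \le 1$, and non-extremality forces $\dim(L(f,x) \cap L(g,x)) \ge 1$ (using the identity $L(f+g,x) = L(f,x) \cap L(g,x)$, which follows because $f+g$ affine on a convex set forces both $f$ and $g$ affine on it, by convexity). This pins down $L(f,x)$ and $L(g,x)$ to be $1$-dimensional segments on a unique common affine line $L$ through $x$, with $x$ in the relative interior of both. Let $I$ be the connected component of $L \cap D$ containing $x$. I would prove $I \subseteq D \setminus R$ with $f,g$ both affine on $I$ by showing that the set $T := \{y \in I : L(f,y), L(g,y)$ are $1$-dimensional segments on $L$ with $y \in \relint\}$ equals all of $I$, via an open-and-closed argument.

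Openness of $T$ in $I$ follows locally: for $y \in T$ and $y' \in \relint L(f,y) \cap \relint L(g,y)$ close to $y$, both $L(f,y')$ and $L(g,y')$ already contain the $1$-dimensional segment $L(f,y) = L(g,y)$, and the only way $y' \notin T$ is that $L(f,y')$ or $L(g,y')$ upgrades to dimension~$2$ (putting $y' \in R$), which is ruled out by the non-limit-of-extreme-points property applied in a small neighborhood of $y'$. Closedness is the main obstacle. If $y_n \in T$ converge to $y \in I$, the affine expressions of $f|_L, g|_L$ on the segments $L(f,y_n), L(g,y_n)$ agree on overlaps, so $f|_L, g|_L$ are affine on an open subset $U \subset L$ with $y \in \cl U$. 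If $y \in U$, the openness argument gives $y \in T$. If instead $y \in \partial U \cap I$, then $f|_L$ or $g|_L$ has a kink at $y$, and non-extremality of $y$ combined with $y \notin R$ would force $L(f,y), L(g,y)$ to be $1$-dimensional segments on some common transverse line $M \ne L$. Ruling out this transverse jump is the hardest step: the discontinuity in foliation direction from $L$ (at the $y_n$) to $M$ (at $y$) should force $(f,g)$-extreme points to appear arbitrarily close to $y$ in the wedge between $L$ and $M$, contradicting the first step. Once $T = I$ is established, the conclusions follow by stitching together the locally defined affine functions on the connected set $I = T$ and observing that $T \subseteq D \setminus R$.
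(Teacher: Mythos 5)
Your first two steps — reading off from Corollary \ref{cor:mixedhess} that $D$ contains no $(f,g)$-extreme points, and then using Definition \ref{defn:fcnextreme} with the identity $L(f+g,x)=L(f,x)\cap L(g,x)$ to conclude that for $x\in D\setminus R$ both $L(f,x)$ and $L(g,x)$ are one-dimensional on a common affine line $L$ — are exactly the paper's first two steps. From there, however, you set up an open-and-closed argument along $I$, and the closedness step contains a genuine gap that you explicitly acknowledge: ruling out the ``transverse jump'' where $L(f,y_n),L(g,y_n)\subset L$ but $L(f,y),L(g,y)$ jump to a line $M\ne L$. The heuristic that this ``should force'' extreme points to appear in a wedge is not an argument, and making it rigorous would require controlling a sequence of one-dimensional affine pieces whose directions and lengths degenerate, which is precisely the kind of delicate limit that open-and-closed arguments along ruling lines tend to run into.

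The paper avoids the open-and-closed scheme entirely by working with the \emph{single} intersection $J:=L(f,x)\cap L(g,x)$ rather than tracking $L(f,y),L(g,y)$ for every $y\in I$. It claims directly that $I\subseteq J$. If not, then since $I$ is an interval of $L$ containing $x$ and $J$ is a subinterval of $L$ also containing $x$, some point $y\in I$ lies on the relative boundary of $J$, i.e.\ $y$ is an endpoint of $L(f,x)$ or of $L(g,x)$. By the convex-function analogue of Lemma \ref{lem:touchingface}, this forces (say) $L(f,y)=\{y\}$, while $L(g,y)$ is a face of the one-dimensional $L(g,x)$ and hence has dimension at most $1$. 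Then $\dim L(f,y)=0$ and $\dim L(f,y)\cap L(g,y)=0$ give that $y$ is $(f,g)$-extreme, contradicting step one. Hence $I\subseteq J$, so $f,g$ are affine on $I$, and $L(f,y)=L(f,x)$, $L(g,y)=L(g,x)$ for all $y\in I$ (again by the analogue of Lemma \ref{lem:touchingface}), so $I\cap R=\varnothing$. In short: rather than propagating the one-dimensionality of the ruling along $I$ point by point, the paper pins the whole of $I$ inside the ruling segment through $x$ and detects a contradiction at its endpoint. This is both shorter and sidesteps your stuck case.

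A smaller remark on your openness step: for $y\in T$ and $y'\in\relint L(f,y)\cap\relint L(g,y)$, the analogue of Lemma \ref{lem:touchingface} gives $L(f,y')=L(f,y)$ and $L(g,y')=L(g,y)$ outright, so there is no possible ``upgrade to dimension $2$'' to rule out. Your openness argument is therefore correct but overcomplicated; the real difficulty in your scheme is confined to closedness, which is exactly where the gap sits.
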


\subsection{On higher dimensions}
\label{sec:higher}

The proof of Theorem \ref{thm:mainupper} is based on an argument with a 
geometric measure theory flavor that will be explained in 
\S\ref{sec:pfupper}. This argument works in the same manner in any 
dimension. In contrast, Theorem~\ref{thm:mainlower} is restricted to 
special mixed area measures that fully capture only the three-dimensional 
case. The reason for this is not merely technical in nature: there is a 
fundamental obstacle that arises in higher dimensions, as we presently 
explain.

The proof of Theorem~\ref{thm:mainlower} is based on the following simple 
observation that will be proved in \S\ref{sec:extreme} below. Here 
$\proj_E$ denotes the orthogonal projection onto $E$.

\begin{lem}
\label{lem:extremeproj}
Let $K,C_1,\ldots,C_{n-2}$ be convex bodies in $\mathbb{R}^n$, and let 
$u\in S^{n-1}$ be a $(K,C_1,\ldots,C_{n-2})$-extreme direction. Then there 
exists $v\in T(K,u)^\perp$ so that $u$ is a 
$(\proj_{v^\perp}C_1,\ldots,\proj_{v^\perp}C_{n-2})$-extreme direction.
\end{lem}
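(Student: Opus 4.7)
The plan is to choose $v$ as a generic nonzero vector in $T(K,u)^\perp$ avoiding a finite family of proper linear subspaces, and then verify the extreme condition after projection by a short dimension count. Writing $V := \mathrm{span}\,T(K,u)$ and $W_J := \mathrm{span}\,T(C_J,u)$ for $J \subseteq [n-2]$, the hypothesis that $u$ is $(K,C_1,\ldots,C_{n-2})$-extreme translates to the dimension bounds $\dim W_J \leq n-|J|$ and $\dim \mathrm{span}\,T(K+C_J,u) \leq n-|J|-1$ for every $J$. The key tool is the touching-cone identity $T(K+C_J,u) = T(K,u)\cap T(C_J,u)$, which I would derive from the equality $N(K+C_J,F(K+C_J,u)) = N(K,F(K,u))\cap N(C_J,F(C_J,u))$, the fact that an intersection of two faces is a face of the intersection, the fact that $u$, lying in the relative interior of both $T(K,u)$ and $T(C_J,u)$, also lies in the relative interior of their intersection, and the uniqueness characterization of the touching cone. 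Combined with the standard identity $\mathrm{aff}(A\cap B) = \mathrm{aff}\,A\cap\mathrm{aff}\,B$ when $\mathrm{relint}\,A\cap\mathrm{relint}\,B\neq\emptyset$ (and $\mathrm{aff}=\mathrm{span}$ for cones through the origin), this yields the span identity $\mathrm{span}\,T(K+C_J,u) = V\cap W_J$.

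Call $J\subseteq[n-2]$ \emph{tight} if $\dim W_J = n-|J|$. For tight $J$, were $W_J\subseteq V$ then $\mathrm{span}\,T(K+C_J,u) = V\cap W_J = W_J$ would have dimension $n-|J|$, contradicting the extreme bound $n-|J|-1$ associated to $\{K\}\cup J$; hence $W_J\not\subseteq V$, equivalently $V^\perp\cap W_J^\perp$ is a proper subspace of $V^\perp$. Since there are only finitely many tight $J$ and $\dim V^\perp\geq 1$ (from the extreme condition applied to $\{K\}$), I can choose a nonzero $v\in V^\perp\setminus\bigcup_{J\text{ tight}}W_J^\perp$; then $v\in T(K,u)^\perp$ and $v\perp u$.

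To verify that $u$ is $(\proj_{v^\perp}C_1,\ldots,\proj_{v^\perp}C_{n-2})$-extreme, fix $J\subseteq[n-2]$. Since $v\perp u$ one has $F(\proj_{v^\perp}C_J,u) = \proj_{v^\perp}F(C_J,u)$ and therefore $N(\proj_{v^\perp}C_J,F(\proj_{v^\perp}C_J,u)) = N(C_J,F(C_J,u))\cap v^\perp$. Now $T(C_J,u)\cap v^\perp$ is a face of this intersection containing $u$, and the touching cone of $\proj_{v^\perp}C_J$ at $u$ is the smallest such face, so $T(\proj_{v^\perp}C_J,u)\subseteq T(C_J,u)\cap v^\perp$ and hence $\dim T(\proj_{v^\perp}C_J,u)\leq \dim(W_J\cap v^\perp)$. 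The right-hand side equals $\dim W_J$ when $v\in W_J^\perp$ and $\dim W_J-1$ otherwise; for non-tight $J$, the bound $\dim W_J\leq n-|J|-1$ handles both subcases, while for tight $J$ the construction gives $v\notin W_J^\perp$ and so $\dim(W_J\cap v^\perp) = \dim W_J-1 = n-|J|-1$. In every case, the orthogonal complement of $T(\proj_{v^\perp}C_J,u)$ within $v^\perp$ has dimension at least $(n-1)-(n-|J|-1) = |J|$, which is what the extreme condition demands.

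The main obstacle will be the span identity $\mathrm{span}\,T(K+C_J,u) = V\cap W_J$: although each ingredient (normal cone of a sum, intersection of two faces, affine hull of an intersection with nonempty interior overlap, $\mathrm{aff}=\mathrm{span}$ for cones at the origin) is standard in convex analysis, weaving them together with the uniqueness characterization of the touching cone needs some care. Once this identity is secured, the remainder reduces to the elementary linear-algebra dimension count above.
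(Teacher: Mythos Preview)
Your proof is correct and takes a genuinely different route from the paper. The paper invokes the equivalent characterization of extreme directions in terms of linearly independent lines (Lemma~\ref{lem:extreme}(c)): given lines $L_0\subseteq T(K,u)^\perp$ and $L_i\subseteq T(C_i,u)^\perp$ with independent directions, it simply takes $v$ to be the direction of $L_0$ and checks that the projected lines $\proj_{v^\perp}L_i$ remain independent and lie in $T(\proj_{v^\perp}C_i,u)^\perp$ via Lemma~\ref{lem:touchingproj}; this is a three-line argument once the equivalence (a)$\Leftrightarrow$(c) of Lemma~\ref{lem:extreme} is in hand. You instead work directly with the dimension condition~(a), choosing $v$ generically in $T(K,u)^\perp$ so as to avoid the finitely many bad subspaces $W_J^\perp$ coming from tight~$J$, and then verify the dimension bounds after projection by a tight/non-tight case split. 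Your approach is more self-contained in that it does not call on the Hall-type equivalence~(c), at the cost of a longer argument; the paper buys brevity by packaging the combinatorics into Lemma~\ref{lem:extreme}. One minor remark: your inclusion $T(\proj_{v^\perp}C_J,u)\subseteq T(C_J,u)\cap v^\perp$ is in fact an equality (this is Lemma~\ref{lem:touchingproj} in the paper), though only the inclusion is needed for your dimension count.
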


Lemma \ref{lem:extremeproj} states that the property of being an extreme 
direction is preserved under projection. Therefore, if it were true that 
\begin{equation}
\label{eq:ifonly}
	u\in \supp \SM_{\proj_{v^\perp}C_1,\ldots,\proj_{v^\perp}C_{n-2}}
	\quad\stackrel{?}{\Longrightarrow}\quad
	u\in \supp \SM_{K,C_1,\ldots,C_{n-2}},
\end{equation}
then the lower bound in Conjecture \ref{conj:schneider} would follow by 
induction on the 
dimension (and thus the full conjecture would follow by Theorem 
\ref{thm:mainupper}). Unfortunately, it turns out that \eqref{eq:ifonly} 
does not always hold. In fact, we will provide an
essentially complete understanding of the behavior of the support under 
projection.

\begin{thm}
\label{thm:mainproj}
Let $K,C_1,\ldots,C_{n-2}$ be convex bodies in 
$\mathbb{R}^n$.
\smallskip
\begin{enumerate}[a.]
\itemsep\medskipamount
\item If $\dim T(K,u)=1$ or $\dim T(K,u)=n-1$, the following
holds: if there exists $v\in T(K,u)^\perp$ so that
$u\in \supp \SM_{\proj_{v^\perp}C_1,\ldots,\proj_{v^\perp}C_{n-2}}$, then
$u\in \supp \SM_{K,C_1,\ldots,C_{n-2}}$.
\item If $1<\dim T(K,u)<n-1$, it can be the case
that $u\in \supp \SM_{\proj_{v^\perp}C_1,\ldots,\proj_{v^\perp}C_{n-2}}$ 
for every $v\in T(K,u)^\perp$, but
$u\not\in \supp \SM_{K,C_1,\ldots,C_{n-2}}$.
\smallskip
\end{enumerate}
\end{thm}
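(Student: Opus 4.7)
The plan is to address parts (a) and (b) by complementary methods.

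For part (a), the main tool is the classical projection formula
\begin{equation*}
n\V_n(M, [0,v], C_1, \ldots, C_{n-2}) = |v|\cdot \V_{n-1}(\proj_{v^\perp} M, \proj_{v^\perp} C_1, \ldots, \proj_{v^\perp} C_{n-2}),
\end{equation*}
which identifies (up to constants and the pushforward under the inclusion $S^{n-1}\cap v^\perp \hookrightarrow S^{n-1}$) the mixed area measure $\SM_{[0,v], C_1,\ldots,C_{n-2}}$ with $\SM_{\proj_{v^\perp}C_1,\ldots,\proj_{v^\perp}C_{n-2}}$. The strategy is to show that in each of the two extremal cases the local geometry of $K$ at $F(K,u)$ forces $\SM_{K, C_1, \ldots, C_{n-2}}$ to dominate a multiple of $\SM_{[0,v], C_1, \ldots, C_{n-2}}$ in a neighborhood of $u$ on $S^{n-1}$, thereby transferring positivity from $\SM_{\proj_{v^\perp}C_\bullet}$ to $\SM_{K, C_1, \ldots, C_{n-2}}$. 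When $\dim T(K, u) = n - 1$, polar duality between the normal and tangent cones at $F(K, u)$ produces a segment in $K$ in the unique direction $v \in T(K, u)^\perp$, and in fact $K$ has a ``cylindrical'' structure in direction $v$ near $F(K, u)$. When $\dim T(K, u) = 1$, $T(K, u) = \mathbb{R}_+u$ is an extreme ray of the normal cone, $T(K, u)^\perp = u^\perp$ is maximal, and $K$ is locally ``flat'' along $u^\perp$ at $F(K, u)$; the local identification then holds via a perturbative/continuity argument on $K + \varepsilon[0,v]$.

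The main obstacle in part (a) is establishing this local identification rigorously, since a priori $\SM_{K, C_\bullet}$ near $u$ depends on $K$ globally. My plan is to approximate $K$ by polytopes or smooth bodies for which the cylindrical or flat structure at $F(K, u)$ is exact, apply the known cases of Conjecture \ref{conj:schneider} from \S\ref{sec:schneider}, and pass to the limit using continuity of mixed area measures. The extremal dimension hypothesis on $T(K, u)$ is what guarantees the approximation preserves the relevant touching cone structure; intermediate dimensions allow geometric ``twisting'' that breaks the preservation and underlies the failure exhibited in part (b).

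For part (b), I would construct an explicit counterexample in $\mathbb{R}^n$ with $n \ge 4$. A natural candidate is a product $K = B_1 \times B_2$ of Euclidean unit balls of dimensions $k_1, k_2 \ge 2$ with $k_1 + k_2 = n$, at a direction $u = (u_1, u_2)$ with both $u_i \ne 0$. Then $T(K, u) = \mathbb{R}_+u_1 \times \mathbb{R}_+u_2$ has dimension $2$, strictly between $1$ and $n - 1$. The product structure of $K$ makes $\SM_{K, C_1, \ldots, C_{n-2}}$ explicitly computable, and by choosing the $C_i$ to be lower-dimensional bodies adapted to this product one can engineer $u \notin \supp \SM_{K, C_1, \ldots, C_{n-2}}$, while every projection along $v \in T(K, u)^\perp$ breaks the product structure and reinstates $u$ in the support of the projected mixed area measure. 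The delicate step is verifying the ``for every $v$'' condition simultaneously; I would exploit the rotational symmetry within each $B_i$ factor to reduce it to a finite explicit verification, cross-checked against Theorem \ref{thm:mainupper}.
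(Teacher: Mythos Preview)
Your plan for part (a) has genuine gaps. First, the geometric claims are not correct. When $\dim T(K,u)=n-1$, there need not be any segment in $K$ in the direction $w\in T(K,u)^\perp$: take $K=C\times\{0\}\subset\mathbb{R}^3$ with $C\subset\mathbb{R}^2$ smooth and strictly convex, and $u=(n_x,0)$; then $T(K,u)$ is a half-plane, $F(K,u)$ is a point, and $K$ contains no segment in the direction of $T(K,u)^\perp$. Likewise, $\dim T(K,u)=1$ does not make $K$ ``locally flat'' along $u^\perp$ (the unit ball has $\dim T(B,u)=1$ everywhere). So the domination $\SM_{K,C_\bullet}\ge c\,\SM_{[0,v],C_\bullet}$ you are hoping for simply need not hold. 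Second, the approximation step fails: support can only grow under weak limits, not shrink, so knowing $u\in\supp\SM_{K_m,C_\bullet}$ for approximants $K_m$ does not give $u\in\supp\SM_{K,C_\bullet}$. (Also, smooth approximants destroy the hypothesis $\dim T=n-1$, and polytope approximants require the $C_i$ to be polytopes too for the known cases of Conjecture~\ref{conj:schneider} to apply.) The paper's arguments are entirely different and do not use approximation. For $\dim T(K,u)=1$ it constructs a body $K'\supset K$ with $h_{K'}>h_K$ only near $u$ (Lemma~\ref{lem:0extcap}) and applies the \emph{equality case} of the Alexandrov--Fenchel inequality to conclude $\SM_{h_{K'}-h_K,C_\bullet}=0$, which forces $u\notin\supp\SM_{L,C_\bullet}$ for \emph{every} $L$; taking $L=[0,v]$ gives the result. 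For $\dim T(K,u)=n-1$ it first replaces $K$ by a body $\tilde K$ whose touching cones are globally contained in those of $[0,w]$ (Lemma~\ref{lem:hatk}), and then uses Schneider's differentiability of $t\mapsto h_{\tilde K\div(-t)[0,w]}$ (Lemma~\ref{lem:sch}) to differentiate $\int f\,d\SM_{\tilde K_t,C_\bullet}\ge 0$ at its minimum $t=0$, obtaining $\int h_{[0,w]}\,d\SM_{f,C_\bullet}=0$ and hence the projection statement.

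For part (b), the product of balls is too rigid to work. With $K=B_1\times B_2$ every touching cone near $u$ splits as $\mathbb{R}_+u_1'\times\mathbb{R}_+u_2'$, so the two-dimensional touching cones are all ``parallel'' rather than twisting. In particular, if you take $C_1=\cdots=C_{n-2}=K$, then for $v=(v_1,0)\in T(K,u)^\perp$ the projection $\proj_{v^\perp}K$ is again a product of balls and $u$ again has a two-dimensional touching cone, so $u\notin\supp\SM_{\proj_{v^\perp}K[n-2]}$ by Corollary~\ref{cor:areasupp}; choosing other $C_i$ does not obviously help, and the ``rotational symmetry'' you invoke works against you (if the support condition fails for one $v$, symmetry propagates the failure). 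The paper's construction (Lemma~\ref{lem:nastybody}) is much more delicate: it builds $K^\circ$ as the convex hull of two strictly convex bodies in parallel hyperplanes, one a ball and one a carefully chosen nonlinear deformation, so that the one-dimensional faces of $K^\circ$ (hence the two-dimensional touching cones of $K$) genuinely twist as one moves along $\bd K^\circ$. The twist is engineered so that for \emph{every} hyperplane $v^\perp\supset T(K,u)$, only an $(n-3)$-dimensional family of these touching cones lies in $v^\perp$, forcing $\dim T(\proj_{v^\perp}K,\cdot)=1$ on a dense set near $u$.
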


Only the case $\dim T(K,u)=1$ will be needed 
for the proof of Theorem~\ref{thm:mainlower}; this case will be proved in 
\S\ref{sec:pflower}. The case $\dim T(K,u)=n-1$ and part \textit{b.}\ are 
included to clarify the basic obstruction to extending the lower bound to 
higher dimensions; their proofs are postponed until 
\S\ref{sec:obstruction}.

Theorem \ref{thm:mainproj} shows that performing induction on the 
dimension by projection must fail in dimensions $n\ge 4$.
Thus further progress on the lower bound in Conjecture 
\ref{conj:schneider} will likely require a fundamentally new ingredient.

\subsection{Organization of this paper}

The remainder of this paper is organized as follows. In 
\S\ref{sec:prelim}, we recall some basic notions of convex geometry that 
appear throughout the paper, and we establish some elementary facts that 
will be used in the sequel. The main results of this paper, Theorems 
\ref{thm:mainupper} and \ref{thm:mainlower}, are proved in 
\S\ref{sec:pfupper} and \S\ref{sec:pflower}, respectively. The 
corresponding results for mixed Hessian measures and the mixed 
Hartman-Nirenberg-Pogorelov problem are developed in 
\S\ref{sec:mixedhess}. Finally, the behavior of the support of mixed area 
measures in higher dimensions is discussed in
\S\ref{sec:obstruction}.

\section{Preliminaries}
\label{sec:prelim}

The aim of this section is to recall a number of basic notions of convex 
geometry and to establish some elementary facts that will be used 
throughout this paper. Our standard reference on convexity is the 
excellent monograph \cite{Sch14}.

\subsection{Basic notions}

By a \emph{convex body} we mean a nonempty compact convex set.
We denote by $\mathcal{K}^n$ the set of all convex bodies in
$\mathbb{R}^n$, by $\mathcal{K}_n^n$ the set of all convex bodies in
$\mathbb{R}^n$ with nonempty interior, and $\mathcal{K}^n_{(o)}$ the set 
of all convex bodies in $\mathbb{R}^n$ that contain the origin in their
interior.

For $K\in\mathcal{K}^n$, we denote by $\relint K$ the 
relative interior and by $\relbd K$ the relative boundary of $K$, that is, 
the interior (boundary) of $K$ viewed as a convex body in the affine hull 
of $K$. The polar dual body of $K\in\mathcal{K}^n_{(o)}$ is defined by
$$
	K^\circ = \{u\in\mathbb{R}^n:\langle u,x\rangle\le 1
	\text{ for all }x\in K\}.
$$
The polar is an involution $K^{\circ\circ}=K$. Moreover,
$$
	h_{K^\circ}(x) = \|x\|_K\quad\text{for all }x\in\mathbb{R}^n,
$$
where the Minkowski functional of $K$ is defined by
$\|x\|_K = \inf\{\lambda>0:x\in \lambda K\}$.

For any $u\in\mathbb{R}^n$ and $t\in\mathbb{R}$, we define
\begin{align*}
	H^+_{u,t} &= \{x\in\mathbb{R}^n : \langle u,x\rangle \ge t\},\\
	H^-_{u,t} &= \{x\in\mathbb{R}^n : \langle u,x\rangle \le t\},\\
	H_{u,t} &= \{x\in\mathbb{R}^n : \langle u,x\rangle = t\}.
\end{align*}
For $u\in S^{n-1}$ and $\varepsilon>0$, 
$B(u,\varepsilon)$ denotes the open $\varepsilon$-ball in $S^{n-1}$ with 
center $u$.

\subsection{Facial structure}
\label{sec:facesetc}

\subsubsection{Faces and exposed faces}

A \emph{face} $F$ of a convex body $K\in\mathcal{K}^n$ is a convex subset 
of $K$ such that $x,y\in K$ and $\frac{x+y}{2}\in F$ implies $x,y\in F$.
For $u\in S^{n-1}$,
$$
	F(K,u) = \{x\in K: \langle u,x\rangle=h_K(u)\} =
	K\cap H_{u,h_K(u)}
$$
is called the \emph{exposed face} of $K$ with normal direction $u$.
Exposed faces are additive under Minkowski addition, that is, 
\begin{equation}
\label{eq:faceaddition}
	F(K+L,u) = F(K,u) + F(L,u)
\end{equation}
for all $K,L\in\mathcal{K}^n$ and $u\in S^{n-1}$; see
\cite[Theorem 1.7.5]{Sch14}.

\subsubsection{Normal cones}
\label{sec:normalcones}

For any $x\in K$, the \emph{normal cone} of $K$ at $x$ is defined 
by
$$
	N(K,x) = \{u\in\mathbb{R}^n:\langle u,x\rangle=h_K(u)\}.
$$
This is precisely the dual notion to an exposed face.
Dual to \eqref{eq:faceaddition}, we 
have
\begin{equation}
\label{eq:normaladdition}
	N(K+L,x+y) = N(K,x)\cap N(L,y)
\end{equation}
for all $K,L\in\mathcal{K}^n$ and $x\in K$, $y\in L$; see
\cite[Theorem 2.2.1]{Sch14}.

The \emph{metric projection map} $p_K:\mathbb{R}^n\to K$ sends
each $x\in\mathbb{R}^n$ to the point in $K$ that is closest to $x$. 
The metric projection is closely related to the normal cone:
$$
	p_K^{-1}(x) = x + N(K,x)
$$
for every $x\in K$; see \cite[p.\ 81]{Sch14}.

Let $F$ be a face of $K$. Then every point $x\in\relint F$ yields the same 
normal cone $N(K,x)$, which will also be denoted as $N(K,F)$; see 
\cite[p.\ 83]{Sch14}. Since relative interior distributes over Minkowski 
addition \cite[Corollary 6.6.2]{Roc70}, the 
property \eqref{eq:normaladdition} extends
to $N(K+L,F(K+L,u))=N(K,F(K,u))\cap N(L,F(L,u))$.

\subsubsection{Touching cones}
\label{sec:touchingcones}

For any $u\in S^{n-1}$, the \emph{touching cone} $T(K,u)$ is defined as 
the unique face of $N(K,F(K,u))$ that contains $u$ in its relative 
interior. The distinction between touching and normal cones
is illustrated in Figure \ref{fig:exex}.

\begin{rem}
\label{rem:gentouch}
Since $N(K,F(K,u))$ is itself a face of every normal cone 
$N(K,x)$ that contains $u$, the touching cone $T(K,u)$ can be equivalently 
defined as the unique face of any normal cone of $K$ so that $T(K,u)$ 
contains $u$ in its relative interior.
\end{rem}

Just as normal cones are dual to exposed faces, touching cones are dual 
to faces. This duality can be made precise as follows \cite[\S 
1.2.3]{Wei12}. For any $K\in\mathcal{K}^n_{(o)}$, the map
$F\mapsto \mathbb{R}_+F = \{\lambda x:\lambda\ge 0,x\in F\}$
defines a bijection
$$
	\{\text{faces of }K^\circ\}
	\stackrel{\sim}{\longrightarrow} 
	\{\text{touching cones of }K\}.
$$
The restriction 
of this map to the set of exposed faces of $K^\circ$ defines a bijection
$$
	\{\text{exposed faces of }K^\circ\}
	\stackrel{\sim}{\longrightarrow} 
	\{\text{normal cones of }K\}.
$$
The inverse map is given by $T\mapsto T\cap \bd K^\circ$ for any touching 
cone $T$ of $K$.

\subsection{Properties of touching cones}

A number of elementary properties of touching cones will be needed 
throughout this paper. We record their proofs here. We begin with the 
following simple observation.

\begin{lem}
\label{lem:touchingface}
Let $v\in T(K,u)$. Then $T(K,v)$ is the unique face of $T(K,u)$ that 
contains $v$ in its relative interior.
\end{lem}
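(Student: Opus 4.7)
The plan is to unpack the definitions via Remark \ref{rem:gentouch} and then lean on the standard fact that every point of a convex set lies in the relative interior of a unique (i.e., minimal) face. Set $N := N(K,F(K,u))$; this is a normal cone of $K$, and by the very definition of $T(K,u)$ it is a face of $N$ containing $u$ in its relative interior. Because $v \in T(K,u) \subseteq N$, Remark \ref{rem:gentouch} applies with the normal cone $N$ to compute $T(K,v)$: it is the unique face of $N$ that contains $v$ in its relative interior, equivalently, the (unique) minimal face of $N$ through $v$.

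Next I would chase the face nesting. Since $T(K,u)$ is itself a face of $N$ and $v \in T(K,u)$, the minimal face of $N$ containing $v$ lies inside $T(K,u)$; that is, $T(K,v) \subseteq T(K,u)$. Here I use the elementary fact that any face of $P$ containing a point $x$ must contain the minimal face of $P$ through $x$. Then, by the standard face-of-a-face principle (if $F \subseteq G$ are both faces of $P$, then $F$ is a face of $G$, and conversely a face of a face of $P$ is a face of $P$), $T(K,v)$ is a face of $T(K,u)$, and it still contains $v$ in its relative interior by construction.

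For uniqueness, suppose $F$ is any face of $T(K,u)$ with $v \in \relint F$. Since $T(K,u)$ is a face of $N$, $F$ is also a face of $N$ with $v$ in its relative interior. The uniqueness clause in Remark \ref{rem:gentouch} then forces $F = T(K,v)$.

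I do not anticipate any real obstacle: every step is a direct application of Remark \ref{rem:gentouch} together with the standard structural properties of faces of convex sets. The only point that deserves a line of justification in the write-up is the transitivity of the face relation (faces of faces are faces, and a face of $P$ contained in a face $G$ of $P$ is a face of $G$), which is entirely routine.
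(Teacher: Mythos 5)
Your proof is correct and takes essentially the same approach as the paper: both arguments hinge on Remark \ref{rem:gentouch} together with the transitivity of the face relation and the existence/uniqueness of the face containing a given point in its relative interior. The paper's version is a touch more compact (it names the unique face $F$ of $T(K,u)$ through $v$ up front, shows $F$ is a face of $N(K,x)$, and invokes Remark \ref{rem:gentouch} once), but the content is the same.
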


\begin{proof}
By definition, $T(K,u)$ is a face of $N(K,x)$ for some $x\in K$.
Let $F$ be the unique face of $T(K,u)$ that contains $v$ in its relative
interior. Then $F$ is also a face of $N(K,x)$ \cite[Theorem 
2.1.1]{Sch14}, and thus $F=T(K,v)$ by Remark \ref{rem:gentouch}.
\end{proof}

Our next observation is the analogue of \eqref{eq:normaladdition}
for touching cones.

\begin{lem}
\label{lem:touchingsum}
Let $K,L\in \mathcal{K}^n$ and $u\in S^{n-1}$. Then
$$
	T(K+L,u) = T(K,u) \cap T(L,u).
$$
\end{lem}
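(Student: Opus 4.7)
The plan is to directly unpack both sides using the definition of the touching cone and the known additivity of normal cones, then match them via the uniqueness clause in the definition of $T(\cdot,u)$.

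First, I would apply the extension of \eqref{eq:normaladdition} to faces, already noted in \S\ref{sec:normalcones}, to write
\[
	N(K+L,F(K+L,u)) = N(K,F(K,u)) \cap N(L,F(L,u)).
\]
By definition, $T(K+L,u)$ is then the unique face of this intersection that contains $u$ in its relative interior. So the task reduces to showing that $T(K,u)\cap T(L,u)$ is \emph{(i)} a face of $N(K,F(K,u))\cap N(L,F(L,u))$, and \emph{(ii)} contains $u$ in its relative interior; uniqueness then forces the identification $T(K,u)\cap T(L,u)=T(K+L,u)$.

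For \emph{(i)} I would invoke the elementary general fact that if $F_i$ is a face of a convex set $N_i$ for $i=1,2$, then $F_1\cap F_2$ is a face of $N_1\cap N_2$: indeed, if $x,y\in N_1\cap N_2$ and $\tfrac{x+y}{2}\in F_1\cap F_2$, the face property applied to each pair $(N_i,F_i)$ gives $x,y\in F_i$, hence $x,y\in F_1\cap F_2$. Applying this with $F_i=T(K,u)$, $T(L,u)$ and $N_i=N(K,F(K,u))$, $N(L,F(L,u))$ yields the claim. For \emph{(ii)}, since $u\in\relint T(K,u)\cap\relint T(L,u)$ by definition, the standard fact that $\relint(A\cap B)=\relint A\cap\relint B$ whenever $\relint A\cap\relint B\ne\varnothing$ (for convex $A,B$) shows $u\in\relint(T(K,u)\cap T(L,u))$.

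I do not anticipate a real obstacle here; the main thing to be careful about is that the definition of $T(C,u)$ involves a specific ambient normal cone, so one has to verify that $T(K,u)\cap T(L,u)$ is a face of the correct cone, namely $N(K+L,F(K+L,u))$. This is exactly what the additivity of normal cones under Minkowski addition (extended to faces) provides, and it is what makes the short uniqueness argument go through.
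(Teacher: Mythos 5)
Your proof is correct and takes essentially the same approach as the paper's: both verify that $T(K,u)\cap T(L,u)$ is a face of the appropriate normal cone of $K+L$ and contains $u$ in its relative interior, the latter via the fact that relative interior distributes over intersection. The only cosmetic difference is that you pin down the ambient cone as $N(K+L,F(K+L,u))$ using the face-level additivity formula from \S\ref{sec:normalcones}, whereas the paper works with arbitrary normal cones $N(K,x)$, $N(L,y)$ of which the touching cones are faces and then concludes via Remark~\ref{rem:gentouch}; both routes are equivalent.
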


\begin{proof}
We first note that $u$ is in the relative interior of $T(K,u) \cap 
T(L,u)$, as the relative interior distributes over finite intersections
\cite[Theorem 6.5]{Roc70}.  On the other hand, let $T(K,u)$ be a face of
$N(K,x)$, and let $T(L,u)$ be a face of $N(L,y)$. Then clearly
$a,b\in N(K,x)\cap N(L,y)$ with $\frac{a+b}{2}\in
T(K,u) \cap T(L,u)$ implies that $a,b\in T(K,u) \cap T(L,u)$. Thus we have 
shown 
that $T(K,u) \cap T(L,u)$ is a face of $N(K+L,x+y)=N(K,x)\cap N(L,y)$,
which concludes the proof by Remark \ref{rem:gentouch}.
\end{proof}

Next, we clarify the behavior of touching cones under projection.

\begin{lem}
\label{lem:touchingproj}
Let $K\in \mathcal{K}^n$ and $u,v\in S^{n-1}$ with $u\in v^\perp$. Then
$$
	T(\proj_{v^\perp}K,u) = T(K,u) \cap v^\perp,
$$
where we view $P_{v^\perp}K$ as a convex body in $v^\perp$.
\end{lem}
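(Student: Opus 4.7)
The plan is to show that $T(K,u) \cap v^\perp$ is a face of some normal cone of $\proj_{v^\perp}K$ and that $u$ lies in its relative interior; Remark~\ref{rem:gentouch} will then force this intersection to equal $T(\proj_{v^\perp}K, u)$.

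The first step is to identify the correct normal cone. Because $u \in v^\perp$, one has $\langle u,x\rangle = \langle u, \proj_{v^\perp}x\rangle$ for every $x$, and more generally $h_{\proj_{v^\perp}K}(w) = h_K(w)$ for all $w \in v^\perp$. This immediately gives $F(\proj_{v^\perp}K,u) = \proj_{v^\perp}F(K,u)$ and, for each $x \in K$,
$$
    N(\proj_{v^\perp}K, \proj_{v^\perp}x) = N(K,x) \cap v^\perp.
$$
Taking $x \in \relint F(K,u)$ and using that linear maps send relative interior to relative interior (\cite[Theorem 6.6]{Roc70}) yields
$$
    N\bigl(\proj_{v^\perp}K,\, F(\proj_{v^\perp}K,u)\bigr) = N(K,F(K,u)) \cap v^\perp.
$$

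The face property is then immediate: if $a,b \in N(K,F(K,u)) \cap v^\perp$ and $\tfrac{a+b}{2} \in T(K,u) \cap v^\perp$, then the hypothesis that $T(K,u)$ is a face of $N(K,F(K,u))$ puts $a,b$ in $T(K,u)$, and they already lie in $v^\perp$. To see that $u$ is in the relative interior of this face, I would combine $u \in \relint T(K,u)$ with the identity for relative interiors of intersections (\cite[Theorem 6.5]{Roc70}, exactly as in the proof of Lemma~\ref{lem:touchingsum}), using that $v^\perp$ coincides with its own relative interior as a linear subspace; this gives $\relint(T(K,u)\cap v^\perp) = \relint T(K,u) \cap v^\perp \ni u$. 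Remark~\ref{rem:gentouch} then identifies $T(K,u) \cap v^\perp$ with $T(\proj_{v^\perp}K, u)$.

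The only real ingredient is the identification of normal cones under projection, which hinges on the assumption $u \in v^\perp$ to equate the relevant support functions of $K$ and $\proj_{v^\perp}K$; once this is in place, the rest of the argument is a routine verification of the defining properties of a touching cone.
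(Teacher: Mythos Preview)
Your proof is correct and follows essentially the same approach as the paper: identify the normal cone of the projection as $N(K,x)\cap v^\perp$, check that $T(K,u)\cap v^\perp$ is a face of it with $u$ in its relative interior (via \cite[Theorem~6.5]{Roc70}, as in Lemma~\ref{lem:touchingsum}), and conclude by Remark~\ref{rem:gentouch}. The only minor difference is that you take the extra step of pinning down the specific normal cone $N(\proj_{v^\perp}K,F(\proj_{v^\perp}K,u))$, whereas the paper simply picks any $x$ with $T(K,u)$ a face of $N(K,x)$ and lets Remark~\ref{rem:gentouch} absorb the choice; your additional identification is correct but not needed.
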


\begin{proof}
Let $T(K,u)$ be a face of $N(K,x)$, and note that
$$
	N(\proj_{v^\perp}K,\proj_{v^\perp}x) =
	\{w\in v^\perp: \langle w,x\rangle = h_K(w)\} =
	N(K,x)\cap v^\perp.
$$
It follows by the same argument as in 
the proof of Lemma \ref{lem:touchingsum} that
$u$ is in the relative interior of
$T(K,u) \cap v^\perp$ and that $T(K,u) \cap v^\perp$ is a face of 
$N(K,x)\cap v^\perp$. This concludes the proof by Remark \ref{rem:gentouch}.
\end{proof}

We finally record an implication between normal and touching cone
inclusion.

\begin{lem}
\label{lem:touchincl}
Let $K,L\in\mathcal{K}^n$ and $x\in \bd K$, $y\in\bd L$
with $N(K,x)\subseteq N(L,y)$. Then
$T(K,u)\subseteq T(L,u)$ for every $u\in N(K,x)$.
\end{lem}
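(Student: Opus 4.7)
The plan is to apply Remark~\ref{rem:gentouch} twice, once to $K$ with the normal cone $N(K,x)$ and once to $L$ with the normal cone $N(L,y)$. Since $u \in N(K,x)$, the remark characterizes $T(K,u)$ as the unique face of $N(K,x)$ whose relative interior contains $u$; in particular, $T(K,u) \subseteq N(K,x)$. Since $u \in N(K,x) \subseteq N(L,y)$ by hypothesis, the same remark characterizes $T(L,u)$ as the unique face of $N(L,y)$ whose relative interior contains $u$. Thus $T(K,u)$ is a convex subset of the cone $N(L,y)$ containing $u$ in its relative interior, while $T(L,u)$ is a face of that same ambient cone also containing $u$.

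It remains to apply the standard fact that a convex subset $G$ of a convex set $C$ whose relative interior meets a face $F$ of $C$ is entirely contained in $F$. With $C = N(L,y)$, $F = T(L,u)$, and $G = T(K,u)$, this immediately gives $T(K,u) \subseteq T(L,u)$. To verify the fact in the present convex-cone setting, take any $z \in T(K,u)$ and use $u \in \relint T(K,u)$ to pick $\varepsilon > 0$ small enough that $w := (1+\varepsilon)u - \varepsilon z$ still lies in $T(K,u) \subseteq N(L,y)$. Then
\[
	u = \tfrac{1}{1+\varepsilon}\, w + \tfrac{\varepsilon}{1+\varepsilon}\, z
\]
expresses $u \in T(L,u)$ as a relative interior point of the segment $[z,w] \subseteq N(L,y)$, so the face property of $T(L,u)$ forces both endpoints into $T(L,u)$; in particular $z \in T(L,u)$.

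No substantive obstacle arises here: the lemma is essentially a formal consequence of Remark~\ref{rem:gentouch}, with the hypothesis $N(K,x) \subseteq N(L,y)$ serving precisely to place $T(K,u)$ inside the same ambient cone for which $T(L,u)$ is a face, thereby making the two invocations of the remark compatible.
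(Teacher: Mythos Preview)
Your proof is correct and follows essentially the same approach as the paper: both arguments invoke Remark~\ref{rem:gentouch} and reduce the claim to an elementary face property. The only cosmetic difference is that the paper works in $N(K,x)$ and shows $T(K,u)\cap T(L,u)$ is a face of $N(K,x)=N(K,x)\cap N(L,y)$ containing $u$ in its relative interior (hence equals $T(K,u)$ by uniqueness), whereas you work in $N(L,y)$ and use the face property of $T(L,u)$ directly to absorb $T(K,u)$.
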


\begin{proof}
It follows by the argument in the proof 
of Lemma \ref{lem:touchingsum} 
and by Remark \ref{rem:gentouch}
that
$u$ is in the relative interior of $T(K,u)\cap T(L,u)$, and that
$T(K,u)\cap T(L,u)$ is a face of $N(K,x)\cap N(L,y)=N(K,x)$. Thus
$T(K,u)\cap T(L,u)=T(K,u)$.
\end{proof}

\subsection{Extreme directions}
\label{sec:extreme}

The notion of a $(C_1,\ldots,C_{n-1})$-extreme direction was defined in 
Definition \ref{defn:extreme}. This notion has a number of equivalent
formulations that are analogous to the equivalent conditions of
Fact \ref{fact:mvpost}.

\begin{lem}
\label{lem:extreme}
Let $C_1,\ldots,C_{n-1}\in\mathcal{K}^n$ and $u\in S^{n-1}$.
The following are equivalent:
\begin{enumerate}[a.]
\itemsep\smallskipamount
\item $u$ is $(C_1,\ldots,C_{n-1})$-extreme.
\item $\dim\big(\sum_{i\in I}T(C_i,u)^\perp \big)\ge |I|$
for all $I\subseteq[n-1]$.
\item There exist lines $L_i\subseteq T(C_i,u)^\perp$, 
$i\in[n-1]$ with linearly independent directions.
\end{enumerate} 
\end{lem}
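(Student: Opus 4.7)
The plan is to establish the chain (a) $\Leftrightarrow$ (b) $\Leftrightarrow$ (c) in two essentially linear-algebraic steps. The first step reduces the family of conditions on touching cones of Minkowski sums appearing in (a) to a family of conditions on sums of linear subspaces, namely (b); the second step is then just the standard transversal criterion for subspaces, essentially the subspace version of Hall's marriage theorem.

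For (a) $\Leftrightarrow$ (b), I would first iterate Lemma \ref{lem:touchingsum} to get $T(C_I,u)=\bigcap_{i\in I}T(C_i,u)$ for every $I\subseteq[n-1]$, and then establish the span identity
$$
	\mathrm{span}\Big(\bigcap_{i\in I}T(C_i,u)\Big) = \bigcap_{i\in I}\mathrm{span}(T(C_i,u)).
$$
Taking orthogonal complements on both sides yields $T(C_I,u)^\perp=\sum_{i\in I}T(C_i,u)^\perp$, so conditions (a) and (b) read identically. The nontrivial inclusion in the span identity crucially uses that $u\in\relint T(C_i,u)$ for every $i$ by definition of the touching cone: if $x$ lies in $\bigcap_i\mathrm{span}(T(C_i,u))$, then for each $i$ there is some $\varepsilon_i>0$ with $u+\varepsilon_i x\in T(C_i,u)$, and convexity of $T(C_i,u)$ together with $u\in T(C_i,u)$ forces $u+\varepsilon x\in T(C_i,u)$ for $\varepsilon=\min_i\varepsilon_i$; hence $x$ lies in the span of the intersection.

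For (b) $\Leftrightarrow$ (c), this is the standard fact that a family of linear subspaces $W_1,\ldots,W_m$ admits linearly independent lines $L_i\subseteq W_i$ if and only if $\dim(\sum_{i\in I}W_i)\ge|I|$ for all $I\subseteq[m]$. The direction (c) $\Rightarrow$ (b) is immediate. The converse is an instance of Rado's theorem for the linear matroid, applied with bases of the $W_i$ playing the role of the ground sets; alternatively it admits a short direct induction on $m$. This is precisely the linear-algebraic analogue of the equivalence (b) $\Leftrightarrow$ (c) in Fact \ref{fact:mvpost}.

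The only genuinely delicate point is the span identity used to pass from (a) to (b): for arbitrary convex cones the span of an intersection need not coincide with the intersection of the spans, and the argument crucially exploits that $u$ lies in the common relative interior of the touching cones $T(C_i,u)$. Once this identity is in hand, both equivalences become purely linear-algebraic and the proof is essentially immediate.
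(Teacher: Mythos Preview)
Your proposal is correct and follows essentially the same approach as the paper. The paper's proof likewise uses Lemma~\ref{lem:touchingsum} to rewrite (a) as a dimension bound on $\bigcap_{i\in I}T(C_i,u)$, observes that this coincides with the dimension of $\bigcap_{i\in I}\mathrm{span}\,T(C_i,u)$ precisely because $u$ lies in the relative interior of each $T(C_i,u)$, and then cites the linear-algebraic equivalence (b) $\Leftrightarrow$ (c) from \cite[Lemma~2.3]{Sch88}; your write-up simply spells out the span identity and the Rado/Hall argument in slightly more detail.
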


\begin{proof}
By Lemma \ref{lem:touchingsum}, \textit{a.}\ states that
$$
	\dim\left({\textstyle\bigcap_{i\in I} T(C_i,u)}\right) =
	\dim\big(T(C_I,u)\big) \le n-|I|
$$
for all $I$, while \textit{b.}\ states that
$$
	\dim\left({\textstyle\bigcap_{i\in I}\mathop{\mathrm{span}} T(C_i,u)}\right) 
	\le n-|I|
$$
for all $I$. The equivalence of these two conditions follows from
the fact that $u$ is in the relative interior of each $T(C_i,u)$.
The equivalence of \textit{b.}\ and \textit{c.}\ is a general fact of
linear algebra that is proved, e.g., in \cite[Lemma 2.3]{Sch88}. 
\end{proof}

We emphasize that the counterpart of Lemma \ref{lem:extreme} 
for $(C_1,\ldots,C_{n-1})$-exposed directions does not hold: 
it may happen that the normal cones $N(C_i,F(C_i,u))$ do not have any 
common point in their relative interiors, and thus their intersection may 
have strictly smaller dimension than the intersection of their linear 
spans. That Definition \ref{defn:exposed} provides the natural 
notion of a $(C_1,\ldots,C_{n-1})$-exposed direction
will become evident from the proof of Theorem 
\ref{thm:mainupper}.

On the other hand, Lemma \ref{lem:extreme} extends readily to its 
counterpart for convex functions that was defined in Definition 
\ref{defn:fcnextreme}.

\begin{lem}
\label{lem:fcnextreme}
Let $\Omega\subseteq\mathbb{R}^n$ be an open convex set, and
let $f_1,\ldots,f_n\in\mathrm{Conv}(\Omega)$ and $x\in\Omega$.
The following are equivalent:
\begin{enumerate}[a.]
\itemsep\smallskipamount
\item $x$ is $(f_1,\ldots,f_n)$-extreme.
\item $\dim\big(\sum_{i\in I} \bar L(f_i,x)^\perp \big)\ge |I|$
for all $I\subseteq[n]$.
\item
There exist lines $L_i\subseteq \bar L(f_i,x)^\perp$,
$i\in[n]$ with linearly independent directions.
\end{enumerate} 
\end{lem}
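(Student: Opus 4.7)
The plan is to follow the proof of Lemma \ref{lem:extreme} line by line, replacing touching cones with the affine sets $L(f_i,x)$. The functional analogue of Lemma \ref{lem:touchingsum} is the identity
\begin{equation*}
\bar L(f_I,x) = \bigcap_{i\in I}\bar L(f_i,x)\qquad\text{for every }I\subseteq[n],
\end{equation*}
and once this is in hand the remainder of the argument is essentially formal. Indeed, condition \textit{a.} then reads $\dim\bigl(\bigcap_{i\in I}\bar L(f_i,x)\bigr)\le n-|I|$, while condition \textit{b.} reads $\dim\bigl(\sum_{i\in I}\bar L(f_i,x)^\perp\bigr)\ge |I|$; these are immediately equivalent via the standard duality $(\bigcap_i V_i)^\perp=\sum_i V_i^\perp$ for subspaces. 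The equivalence of \textit{b.} and \textit{c.} is a purely linear-algebraic statement provided by \cite[Lemma 2.3]{Sch88}, exactly as in Lemma \ref{lem:extreme}.

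The main work is therefore to verify the displayed intersection formula. I would first prove its set-level version $L(f_I,x)=\bigcap_{i\in I}L(f_i,x)$. For the inclusion ``$\subseteq$'', I would use the basic fact that if a sum of convex functions is affine on a convex set, then so is each summand: along any segment in the set, each component inequality $f_i(\lambda a+(1-\lambda)b)\le \lambda f_i(a)+(1-\lambda)f_i(b)$ is oriented the same way, so equality in the corresponding inequality for $f_I$ forces equality for each $i$. Combining this with the maximality clause in the definition of $L(f_i,x)$ yields $L(f_I,x)\subseteq L(f_i,x)$. For the reverse inclusion, $f_I$ is evidently affine on $\bigcap_i L(f_i,x)$, and $x$ lies in the relative interior of this intersection because relative interior distributes over finite intersections sharing a common relative interior point \cite[Theorem 6.5]{Roc70}; maximality once more gives $\bigcap_i L(f_i,x)\subseteq L(f_I,x)$.

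Passing from $L$ to $\bar L$ rests on the elementary observation that if $C_1,\ldots,C_k$ are convex sets with $0$ in their common relative interior, then $\mathrm{span}\bigl(\bigcap_i C_i\bigr)=\bigcap_i\mathrm{span}(C_i)$. The inclusion ``$\subseteq$'' is trivial; for the reverse, if $y\in\bigcap_i\mathrm{aff}(C_i)$, then since $0\in\relint C_i$ we have $ty\in C_i$ for all sufficiently small $t>0$, and as there are finitely many $i$ this $t$ can be chosen uniformly, giving $ty\in\bigcap_i C_i$ and hence $y\in\mathrm{span}\bigl(\bigcap_i C_i\bigr)$. Applying this to $C_i=L(f_i,x)-x$ finishes the proof of the intersection formula. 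The only mildly delicate step in the whole argument is this last relative-interior manipulation when passing from the set-level identity to its span-level analogue.
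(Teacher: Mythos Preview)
Your proposal is correct and follows essentially the same route as the paper: the paper also reduces everything to the identity $L(f_I,x)=\bigcap_{i\in I}L(f_i,x)$, proves it via the observation that a sum of convex functions is affine on a set iff each summand is, invokes \cite[Theorem 6.5]{Roc70} for the relative-interior step, and then defers to the proof of Lemma~\ref{lem:extreme} (which is precisely your span/duality argument together with \cite[Lemma~2.3]{Sch88}). You have simply spelled out the span-level passage more explicitly than the paper does.
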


\begin{proof}
The proof is identical to that of Lemma \ref{lem:extreme} using that
$$
	L(f_I,x) = 
	{\textstyle\bigcap_{i\in I} L(f_i,x)}.
$$
To verify this is the case, note that as
$$
	f_I\big(\tfrac{y+z}{2}) -
	\tfrac{f_I(y) + f_I(z)}{2} =
	\sum_{i\in I}
	\big(
	f_i\big(\tfrac{y+z}{2}) -
	\tfrac{f_i(y) + f_i(z)}{2} 
	\big)
$$
and each term in the sum is nonnegative, $f_I$ is affine on a set 
$A$ if and only if $f_i$ is affine on $A$ for every $i\in I$.
The claim therefore follows from the fact that the relative interior 
distributes over finite intersections \cite[Theorem 6.5]{Roc70}.
\end{proof}

We can now prove Lemma \ref{lem:extremeproj} in the introduction.

\begin{proof}[Proof of Lemma \ref{lem:extremeproj}]
Let $u$ be $(K,C_1,\ldots,C_{n-2})$-extreme.
By Lemma~\ref{lem:extreme}, there exist lines 
$L_0\subseteq T(K,u)^\perp$ and
$L_i\subseteq T(C_i,u)^\perp$, $i=1,\ldots,n-2$
with linearly independent directions. Let $L_0=\mathbb{R}v$.
Then the lines $\proj_{v^\perp}L_1,\ldots,\proj_{v^\perp}L_{n-2}$ have
linearly independent directions. Moreover, since each element of
$\proj_{v^\perp}L_i$ is a linear combination of an element of $L_i$ and 
$v$, we have
$$
	\proj_{v^\perp}L_i \subseteq
	T(C_i,u)^\perp + \mathbb{R}v =
	\big(T(C_i,u)\cap v^\perp\big)^\perp =
	T(\proj_{v^\perp}C_i,u)^\perp
$$
by Lemma \ref{lem:touchingproj}. Thus $u$ is 
$(\proj_{v^\perp}C_1,\ldots,\proj_{v^\perp}C_{n-2})$-extreme by
Lemma \ref{lem:extreme}.
\end{proof}

\subsection{Mixed volumes and mixed area measures}
\label{sec:mvma}

The most basic properties of mixed volumes $\V_n(C_1,\ldots,C_n)$ and 
mixed area measures $\SM_{C_1,\ldots,C_{n-1}}$ are that they are additive 
and $1$-homogeneous in each argument $C_i$, symmetric in their arguments 
$C_i$, and that they are nonnegative and translation invariant. For the 
theory of mixed volumes and mixed area measures, we refer to the 
monograph \cite{Sch14}.

Let $f=h_K-h_L$ be a difference of support functions of convex bodies 
$K,L\in\mathcal{K}^n$. By multilinearity,
we can uniquely extend the definitions of mixed volumes and mixed area 
measures to such functions by defining \cite[\S 5.2]{Sch14}
\begin{align*}
	\V_n(f,C_1,\ldots,C_{n-1}) &=
	\V_n(K,C_1,\ldots,C_{n-1})-\V_n(L,C_1,\ldots,C_{n-1}),\\
	\SM_{f,C_1,\ldots,C_{n-2}} &= 
	\SM_{K,C_1,\ldots,C_{n-2}} - \SM_{L,C_1,\ldots,C_{n-2}}.
\end{align*}
If $f_1,\ldots,f_n$ are differences of support functions, we can iterate 
this construction to define $\V_n(f_1,\ldots,f_n)$ and 
$\SM_{f_1,\ldots,f_{n-1}}$. Note that these functional extensions of
mixed volumes and mixed area measures
are no longer necessarily nonnegative. Any $f\in C^2(S^{n-1})$ is 
a difference of support functions \cite[Lemma 1.7.8]{Sch14}.

We now recall the behavior of mixed volumes under projection onto
a hyperplane \cite[Theorem 5.3.1]{Sch14}:
for any $v\in S^{n-1}$, we have
$$
	n\,\V_n([0,v],C_1,\ldots,C_{n-1}) =
	\V_{n-1}(\proj_{v^\perp}C_1,\ldots,\proj_{v^\perp}C_{n-1}).
$$
Since $h_{\proj_EC}=h_C|_E$, this implies that
$$
	n\,\V_n([0,v],f_1,\ldots,f_{n-1}) =
	\V_{n-1}(f_1|_{v^\perp},\ldots,f_{n-1}|_{v^\perp})
$$
for any differences of support functions $f_1,\ldots,f_{n-1}$.

In the opposite direction, we will require the following.

\begin{lem}
\label{lem:mixedareaproj}
Let $C_1,\ldots,C_{n-2}\in\mathcal{K}^n$, and let $u,v\in S^{n-1}$ with
$u\in v^\perp$. Then $u\in\supp 
\SM_{\proj_{v^\perp}C_1,\ldots,\proj_{v^\perp}C_{n-2}}$ implies that
$u\in \SM_{B,C_1,\ldots,C_{n-2}}$.
\end{lem}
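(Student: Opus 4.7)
The idea is to represent the ball $B$ as a zonoid and then apply the projection formula for mixed volumes stated just above the lemma. By rotation invariance, $\alpha_n := \int_{S^{n-1}}|\langle u,w\rangle|\,d\omega(w)$ is a positive constant independent of $u\in S^{n-1}$, where $\omega$ denotes the spherical Lebesgue measure; since $h_B\equiv 1$ on $S^{n-1}$ and $h_{[-w,w]}(u)=|\langle u,w\rangle|$, we obtain
\[
h_B(u) = \frac{1}{\alpha_n}\int_{S^{n-1}} h_{[-w,w]}(u)\,d\omega(w),
\]
that is, $B=\alpha_n^{-1}\int_{S^{n-1}}[-w,w]\,d\omega(w)$ as a Minkowski integral. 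Extending multilinearity of mixed area measures to such integrals in the standard way (cf.\ \cite{Sch14}), and using $\SM_{[-w,w],C_1,\ldots,C_{n-2}}=2\,\SM_{[0,w],C_1,\ldots,C_{n-2}}$ (by translation invariance and $1$-homogeneity), yields
\[
\int \varphi\,d\SM_{B,C_1,\ldots,C_{n-2}} = \frac{2}{\alpha_n}\int_{S^{n-1}} g(w)\,d\omega(w),
\]
for every $\varphi\in C(S^{n-1})$, where $g(w):=\int\varphi\,d\SM_{[0,w],C_1,\ldots,C_{n-2}}$.

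Next, combining the projection formula stated just before the lemma with the identity $\V_n(f,C_1,\ldots,C_{n-1})=\tfrac{1}{n}\int f\,d\SM_{C_1,\ldots,C_{n-1}}$, together with the analogous identity in dimension $n-1$, gives, first for any support function $f=h_K$ and then for every $\varphi\in C(S^{n-1})$ by density (since differences of support functions are dense in $C(S^{n-1})$),
\[
g(w) = \frac{1}{n-1}\int_{w^\perp\cap S^{n-1}}\varphi\,d\SM_{\proj_{w^\perp}C_1,\ldots,\proj_{w^\perp}C_{n-2}}.
\]
Moreover, since $w\mapsto[0,w]$ is continuous in the Hausdorff metric and mixed area measures are weakly continuous in each argument, $g$ is a continuous function on $S^{n-1}$.

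To conclude, fix any open neighborhood $V$ of $u$ in $S^{n-1}$ and pick $\varphi\in C(S^{n-1})$ nonnegative, supported in $V$, and strictly positive on some smaller open neighborhood of $u$. The hypothesis $u\in\supp\SM_{\proj_{v^\perp}C_1,\ldots,\proj_{v^\perp}C_{n-2}}$ together with the identification of $g$ forces $g(v)>0$, because $\varphi$ is strictly positive on a relative neighborhood of $u$ in $v^\perp\cap S^{n-1}$. Continuity then yields $g>0$ on an open neighborhood of $v$ of positive $\omega$-measure, whence $\int\varphi\,d\SM_{B,C_1,\ldots,C_{n-2}}>0$. This implies $V\cap\supp\SM_{B,C_1,\ldots,C_{n-2}}\neq\varnothing$; since $V$ was arbitrary, $u\in\supp\SM_{B,C_1,\ldots,C_{n-2}}$. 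The main technical point is the Minkowski integral representation of $\SM_{B,C_1,\ldots,C_{n-2}}$ arising from the zonoid structure of $B$; everything else is a routine continuity argument.
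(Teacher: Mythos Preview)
Your proof is correct, but it takes a different (and longer) route than the paper. Both arguments first identify
\[
(n-1)\,\SM_{[0,v],C_1,\ldots,C_{n-2}} = \SM_{\proj_{v^\perp}C_1,\ldots,\proj_{v^\perp}C_{n-2}}
\]
via the projection formula, so that $u\in\supp\SM_{[0,v],C_1,\ldots,C_{n-2}}$. The paper then finishes in one line by invoking the support monotonicity lemma \cite[Lemma~7.6.15]{Sch14}: since $[0,v]\subseteq B$, one has $\supp\SM_{[0,v],C_1,\ldots,C_{n-2}}\subseteq\supp\SM_{B,C_1,\ldots,C_{n-2}}$. You instead exploit the zonoid representation $B=\alpha_n^{-1}\int_{S^{n-1}}[-w,w]\,d\omega(w)$ to write $\SM_{B,C_1,\ldots,C_{n-2}}$ as an average of the measures $\SM_{[0,w],C_1,\ldots,C_{n-2}}$, and then use continuity in $w$ to propagate positivity from $w=v$ to a neighborhood. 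Your argument is more self-contained in that it does not cite the support monotonicity result (whose proof is itself nontrivial), at the cost of being considerably more elaborate; the paper's version is a two-line consequence of standard references.
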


\begin{proof}
The projection formula for mixed volumes implies
that \cite[Remark 8.6]{SvH23}
\begin{equation}
\label{eq:projmixedarea}
	(n-1)\,\SM_{[0,v],C_1,\ldots,C_{n-2}} =
	\SM_{\proj_{v^\perp}C_1,\ldots,\proj_{v^\perp}C_{n-2}}
\end{equation}
if we view the right-hand side as a measure on $\mathbb{R}^n$ that is 
supported in $v^\perp$. The result
follows as $\supp \SM_{[0,v],C_1,\ldots,C_{n-2}} \subseteq
\supp \SM_{B,C_1,\ldots,C_{n-2}}$ by \cite[Lemma 7.6.15]{Sch14}.
\end{proof}

Finally, for any $C\in\mathcal{K}^n$, the \emph{area measure} 
$\SM_{C[n-1]}$ has the representation 
\begin{equation}
\label{eq:areameas}
	\SM_{C[n-1]}(A) = \mathcal{H}^{n-1}\big(n_C^{-1}(A)\big),
\end{equation}
for $A\subseteq S^{n-1}$ \cite[\S 5.1]{Sch14},
where $\mathcal{H}^{n-1}$ denotes the $(n-1)$-Hausdorff measure and
$$
	n_C^{-1}(A) = \bigcup_{u\in A} F(C,u) \subseteq \bd C
$$
is the set of points in $\bd C$ that have a normal direction
in $A$.

\section{Proof of the upper bound}
\label{sec:pfupper}

The main aim of this section is to prove the following.

\begin{thm}
\label{thm:upper}
For any $K,L\in\mathcal{K}^n$ and $k=0,\ldots,n-2$, we have
$$
	\SM_{K[k+1],L[n-k-2]}\big(
	\big\{ u\in S^{n-1} : \dim N(K,F(K,u)) \ge n-k\big\}
	\big) = 0.
$$
\end{thm}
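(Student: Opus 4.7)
The plan is to reduce Theorem~\ref{thm:upper} to a Hausdorff-measure estimate on the area measure of a perturbed body via a polynomial-expansion trick, using the polytope case and Alexandrov-type rectifiability of the singular boundary of $K$ as the key ingredients. The polytope case itself is immediate: if $K, L$ are polytopes, the explicit atomic formula \eqref{eq:mixedapoly} together with Fact~\ref{fact:mvpost} gives that an atom $\SM_{K[k+1], L[n-k-2]}(\{u\}) = \V_{n-1}(F(K,u)[k+1], F(L,u)[n-k-2])$ is positive only if $F(K,u)$ contains $k+1$ linearly independent segments, i.e.\ $\dim F(K, u) \ge k+1$. Combined with the elementary inequality $\dim F(K, u) + \dim N(K, F(K, u)) \le n$ (which holds because every element of $N(K, F(K, u))$ is orthogonal to $F(K, u) - F(K, u)$), this forces $\dim N(K, F(K, u)) \le n - k - 1 < n - k$, so every atom lies outside the set $A_k := \{u \in S^{n-1} : \dim N(K, F(K, u)) \ge n - k\}$.

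For general convex bodies, I would exploit the polynomial identity
\[
\SM_{(K + \epsilon L)[n-1]}(A_k) = \sum_{j=0}^{n-1} \binom{n-1}{j}\, \epsilon^{j}\, \SM_{K[n-1-j], L[j]}(A_k),
\]
valid for all $\epsilon \ge 0$ by multilinearity of mixed area measures. Since all coefficients on the right are non-negative, if one can show that the left-hand side is $O(\epsilon^{n-k-1})$ as $\epsilon \to 0^{+}$, then every coefficient of $\epsilon^{j}$ with $j \le n-k-2$ must vanish; in particular, the coefficient of $\epsilon^{n-k-2}$ is a positive multiple of $\SM_{K[k+1], L[n-k-2]}(A_k)$, which would give the desired conclusion. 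An attractive feature of this reduction is that it would establish the full range $k = 0, \ldots, n-2$ of the theorem simultaneously, essentially reducing the mixed-body statement to a statement about the single body $K + \epsilon L$.

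The heart of the argument is therefore the Hausdorff estimate $\SM_{(K+\epsilon L)[n-1]}(A_k) = O(\epsilon^{n-k-1})$, which via \eqref{eq:areameas} reads $\mathcal{H}^{n-1}\big(\bigcup_{u \in A_k} (F(K, u) + \epsilon F(L, u))\big) = O(\epsilon^{n-k-1})$. Two geometric inputs drive this: first, the bound $\dim F(K, u) \le k$ for $u \in A_k$ implies that $n_K^{-1}(A_k) = \bigcup_{u \in A_k} F(K, u)$ is contained in $\{x \in \bd K : \dim N(K, x) \ge n-k\}$, which by Alexandrov's classical rectifiability theorem for convex bodies is a countable union of $k$-rectifiable subsets of $\bd K$; second, each fiber $F(K, u) + \epsilon F(L, u)$ is the thickening, by $\epsilon F(L, u)$, of a set of dimension at most $k$ inside the $(n-1)$-dimensional hyperplane $H_{u, h_{K+\epsilon L}(u)}$, giving a fiber-wise $\epsilon^{n-k-1}$ scaling (thickness $\epsilon$ in $n-k-1$ transverse directions against a $k$-dimensional rectifiable base). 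The main obstacle will be to convert these pointwise bounds into a uniform estimate on the full union via a coarea- or tube-type argument, carefully controlling how thickenings living in varying hyperplanes assemble into a portion of $\bd(K + \epsilon L)$ without over-counting. This is the genuinely geometric-measure-theoretic step, and it will require careful simultaneous use of rectifiability and of the face structure of $K$.
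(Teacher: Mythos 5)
Your overall skeleton matches the paper's exactly: the same polynomial-expansion trick reduces the full theorem (simultaneously for all $k$) to the single estimate $\SM_{(K+\epsilon L)[n-1]}(A_k) = O(\epsilon^{n-k-1})$, and via the representation \eqref{eq:areameas} this becomes a Hausdorff-measure bound on a neighborhood of the $k$-singular part of $\bd K$ inside $\bd(K+\epsilon L)$. Your polytope warm-up and the observation that $n_K^{-1}(A_k)$ lies in the set of $k$-singular boundary points (via $\dim F(K,u)+\dim N(K,F(K,u))\le n$) are both correct.

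However, what you flag as ``the main obstacle'' is precisely where the paper needs a new idea, and your proposal does not close that gap. The issue is that the Anderson--Klee (a.k.a.\ Alexandrov) rectifiability result you invoke gives a countable cover of the $k$-singular boundary points by sets of finite $\mathcal{H}^k$-measure, but this cover need not be compatible with the Gauss map: there is no reason that the normal directions whose faces meet one piece of the cover form a tractable set $A_m\subseteq S^{n-1}$, nor that the union $\bigcup_{v\in A_m} F(K+\epsilon L,v)$ can be trapped inside a tube around that piece. Without such compatibility, the ``coarea- or tube-type argument'' you gesture at has no mechanism for avoiding over-counting across the uncountably many hyperplanes involved; indeed the paper explicitly notes that the classical rectifiability result ``does not suffice for our purposes, however, since it is unclear whether this covering of a subset of $\bd K$ can be obtained by pulling back a covering of the corresponding normal directions in $S^{n-1}$.'' The paper's resolution is Proposition~\ref{prop:klee}: a refined Lipschitz cover $\im u_m$ of $\bd K$ (built from metric projections of affine slices, Lemma~\ref{lem:protoklee}) with the crucial property that each face $F$ with $\dim N(K,F)\ge n-k$ is wholly contained in some $\im u_m$. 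This makes the sets $A_m = \{v: F(K,v)\subseteq\im u_m\}$ cover $A_k$ and, by additivity of exposed faces, forces $\bigcup_{v\in A_m}F(K+\epsilon L,v)\subseteq (\im u_m + \epsilon L)\cap\bd(K+\epsilon L)$, at which point an elementary covering of $[0,1]^k$ by $O(\epsilon^{-k})$ balls of radius $\epsilon$ gives the desired $O(\epsilon^{n-k-1})$. That cover lemma is the genuine missing ingredient in your proposal.
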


\smallskip

Let us first explain why this suffices to conclude the
proof of Theorem \ref{thm:mainupper}.

\begin{proof}[Proof of Theorem \ref{thm:mainupper}]
By definition,
\begin{multline*}
	\big\{ u\in S^{n-1}: u\text{ is not }(C_1,\ldots,C_{n-1})\text{-exposed}
	\big\} = \\
	\bigcup_{k=0}^{n-2}
	\bigcup_{|I|=k+1}
	\big\{ u\in S^{n-1}: \dim N(C_I,F(C_I,u)) \ge n-k \big\}.
\end{multline*}
Thus it suffices to show that
$$
	\SM_{C_1,\ldots,C_{n-1}}\big(
	\big\{ u\in S^{n-1}: \dim N(C_I,F(C_I,u)) \ge n-k \big\}
	\big)=0
$$
for all $k$ and $I\subseteq[n-1]$ with $|I|=k+1$. But as
$\SM_{C_1,\ldots,C_{n-1}}\le
\SM_{C_I[|I|],C_{I^c}[n-1-|I|]}$ by the additivity of mixed area measures,
this follows from Theorem \ref{thm:upper}.
\end{proof}

The remainder of this section is devoted to the proof of Theorem 
\ref{thm:upper}. We begin by sketching the main idea behind the proof.

\subsection{Outline of the proof}
\label{sec:upperoutline}

We begin with an elementary observation: the conclusion of Theorem 
\ref{thm:upper} is equivalent to the statement that for all $\ell>k$
$$
	\SM_{K[\ell],L[n-\ell-1]}\big(
	\big\{ u\in S^{n-1} : \dim N(K,F(K,u)) \ge n-k\big\}
	\big) = 0,
$$
since the set inside the measure increases if we replace $k$ by $\ell-1$. 
Thus the conclusion of Theorem \ref{thm:upper} is also equivalent to
$$
	\SM_{(K+tL)[n-1]}\big(
        \big\{ u\in S^{n-1} : \dim N(K,F(K,u)) \ge n-k\big\}
        \big) = O(t^{n-k-1})	
$$
as $t\downarrow 0$, since $\SM_{(K+tL)[n-1]}= \sum_{\ell=0}^{n-1}
{n-1\choose \ell} t^{n-\ell-1} \SM_{K[\ell],L[n-\ell-1]}$. We aim to
establish such a property by exploiting the representation 
\eqref{eq:areameas} 
of $\SM_{(K+tL)[n-1]}(A)$ as the area of the set of boundary points of 
$K+tL$ with a normal direction in $A$.

The basic intuition behind the proof is that the set of points in $\bd K$ 
that have a normal direction $u$ with $\dim N(K,F(K,u))\ge n-k$ is 
expected to behave as a $k$-dimensional set. Thus the corresponding 
boundary points of $K+tL$ are contained in an $L$-shaped ``tube'' with 
radius $t$ around a $k$-dimensional subset of $\bd K$, which therefore has 
area $O(t^{n-k-1})$ regardless of the choice of $L$.

That the set of $k$-singular boundary points of $K$---that is, boundary 
points with normal cone of dimension at least $n-k$---has dimension $k$
is made precise by classical results of Anderson and Klee \cite{AK52}, see 
also \cite[Theorem 2.2.5]{Sch14}: they show that this set can be covered 
by countably many compact sets of finite $k$-Hausdorff measure. This does 
not suffice for our purposes, however, since it is unclear whether this 
covering of a subset of $\bd K$ can be obtained by pulling back a covering 
of the corresponding normal directions in $S^{n-1}$. To make the argument 
work, we must first prove the existence of such a cover that behaves 
nicely under the Gauss map. We can subsequently make precise the idea that 
an $L$-shaped ``tube'' around any element of this cover has area of order 
$O(t^{n-k-1})$, concluding the proof.

\subsection{A Lipschitz cover of $k$-singular points}

The aim of this section is to prove the following result, which refines 
the results of \cite{AK52} and \cite[Theorem 2.2.5]{Sch14}.

\begin{prop}
\label{prop:klee}
Fix $K\in\mathcal{K}^n$ and $k\in \{0,\ldots,n-1\}$. Then there exists
a family of Lipschitz maps $u_m:[0,1]^k\to\bd K$ such that every face $F$ 
of $K$ with $\dim N(K,F)\ge n-k$ is contained in $\im u_m$ for some
$m\in\mathbb{N}$.
\end{prop}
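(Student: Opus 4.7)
The plan is to cover the union of all faces $F$ of $K$ with $\dim N(K,F)\ge n-k$ by countably many compact ``sectors'', each of which projects biLipschitzly onto a compact subset of a fixed $k$-dimensional linear subspace, and then to parameterize each sector by a Lipschitz map $[0,1]^k\to\bd K$. I will assume $K$ has nonempty interior; the lower-dimensional case reduces to this one by passing to the affine hull of $K$.

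Concretely, fix a countable dense sequence $\{w_j\}\subset S^{n-1}$ and enumerate (over $m\in\mathbb N$) all valid choices of an $(n-k)$-tuple of linearly independent $w_{j_1},\ldots,w_{j_{n-k}}$ together with an integer $\ell$. Let $W_m:=\mathrm{span}(w_{j_1},\ldots,w_{j_{n-k}})$ and define
$$
\Sigma_m:=\bigl\{x\in\bd K : N(K,x)\cap \overline{B(w_{j_i},1/\ell)}\ne\emptyset\text{ for all }i\bigr\}.
$$
Each $\Sigma_m$ is compact by upper semicontinuity of the normal-cone multimap. The geometric core of the argument is to show that, when $\ell$ exceeds a constant $C_m$ depending only on the Gram matrix of $(w_{j_i})$, the projection $\proj_{W_m^\perp}$ is biLipschitz on $\Sigma_m$. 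Indeed, for $x,y\in\Sigma_m$ with witness normals $u_i\in N(K,x)$, $u_i'\in N(K,y)$ in the respective closed balls, the defining inequalities $\langle u_i,y-x\rangle\le 0\le \langle u_i',y-x\rangle$ together with $|u_i-w_{j_i}|,|u_i'-w_{j_i}|\le 1/\ell$ yield $|\langle w_{j_i},y-x\rangle|\le |y-x|/\ell$ for each $i$. Expanding the $W_m$-component of $y-x$ in the basis $(w_{j_i})$ and inverting their Gram matrix then gives $|(y-x)_{W_m}|\le C_m|y-x|/\ell$; combined with $|y-x|^2=|(y-x)_{W_m}|^2+|\proj_{W_m^\perp}(y-x)|^2$, this yields the biLipschitz estimate $|y-x|\le (1-C_m^2/\ell^2)^{-1/2}|\proj_{W_m^\perp}(y-x)|$ for $\ell>C_m$.

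To promote this biLipschitz structure to a map from $[0,1]^k$ into $\bd K$: the compact set $A_m:=\proj_{W_m^\perp}(\Sigma_m)\subset W_m^\perp\cong\mathbb R^k$ lies in some box $[0,R_m]^k$, and the Lipschitz inverse $\proj_{W_m^\perp}^{-1}\colon A_m\to\Sigma_m\subset\mathbb R^n$ extends by Kirszbraun's theorem to a Lipschitz $\bar f_m\colon[0,R_m]^k\to\mathbb R^n$. Since $\bar f_m([0,R_m]^k)$ has Hausdorff dimension at most $k<n$, there exists $x_0\in\intr K$ outside this compact image. Post-composing $\bar f_m$ with the radial projection from $x_0$ onto $\bd K$ (which is Lipschitz on any compact set avoiding $x_0$ and fixes $\bd K$ pointwise) and with the affine rescaling $[0,1]^k\to[0,R_m]^k$ yields the desired Lipschitz $u_m\colon[0,1]^k\to\bd K$ with $\Sigma_m\subseteq\im u_m$.

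Finally, every face $F$ of $K$ with $\dim N(K,F)\ge n-k$ lies in some $\im u_m$: pick any basis $v_1,\ldots,v_{n-k}\in N(K,F)\cap S^{n-1}$ and use density of $\{w_j\}$ to choose $j_1,\ldots,j_{n-k}$ and $\ell>C_m$ with $v_i\in\overline{B(w_{j_i},1/\ell)}$; since $N(K,F)\subseteq N(K,x)$ for every $x\in F$, this forces $F\subseteq\Sigma_m\subseteq\im u_m$. The main obstacle is the biLipschitz projection of the second paragraph -- a rigorous form of the heuristic that a point with $n-k$ nearly-fixed linearly independent normals is confined, in non-projection directions, to a $k$-dimensional set; the $[0,1]^k$ parameterization in the third paragraph is technical, but is made possible precisely by the dimension gap $k<n$.
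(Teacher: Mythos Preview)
Your argument is correct and takes a genuinely different route from the paper's. The paper proves the result by showing (its Lemma~\ref{lem:protoklee}) that for each face $F$ with $\dim N(K,F)=n-\ell\ge n-k$ there is a nonempty \emph{open} set of affine $\ell$-flats $H$ such that $F\subseteq p_K(H\cap S)$, where $p_K$ is the metric projection onto $K$ and $S$ is a large ball; it then enumerates a countable dense family of such flats, parameterizes each flat's intersection with $S$ by a Lipschitz map from $[0,1]^k$, and sets $u_m=p_K\circ f_m$. Your approach instead builds the sectors $\Sigma_m\subset\bd K$ directly via approximate normals and proves a clean biLipschitz projection estimate onto $W_m^\perp$, then uses Kirszbraun plus radial projection to promote this to a Lipschitz map $[0,1]^k\to\bd K$. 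The paper's route avoids Kirszbraun and the radial retraction by exploiting the single Lipschitz map $p_K$, at the cost of a perturbation argument (the generalized tube lemma) to get stability of the flat; your route trades that for an explicit quantitative estimate that makes the ``$k$-dimensionality'' of the singular set transparent, and is arguably closer in spirit to the classical Anderson--Klee argument. One small point: your reduction to full-dimensional $K$ is fine when $k<\dim K$, but when $k\ge\dim K$ the face $K$ itself satisfies $\dim N(K,K)=n-\dim K\ge n-k$ and must be covered; this is trivial (one surjective Lipschitz map $[0,1]^k\to K=\bd K$ suffices), but worth a sentence.
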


We begin with some basic notations. The affine Grassmannian 
$\graff(n,k)$ is defined as the set of all 
$k$-dimensional affine subspaces of $\mathbb{R}^n$.
Denote by $\mathrm{M}_{n-k,n}$ the set of all $(n-k)\times n$ matrices 
with rank $n-k$. Then the map 
$$
	H:\mathrm{M}_{n-k,n}\times\mathbb{R}^n\to\graff(n,k)
$$
defined by
$$
	H(M,y) = \{x\in\mathbb{R}^n : M(x-y)=0\}
$$
is surjective. The following trivial fact will be used below.

\begin{fact}
\label{fact:hslice}
Consider $M\in\mathrm{M}_{n-k,n}$ so that
$M=[\,M_1\,|\,M_2\,]$ with 
$M_1\in\mathbb{R}^{(n-k)\times (n-k)}$ invertible and 
$M_2\in\mathbb{R}^{(n-k)\times k}$. Define
$g:\mathbb{R}^k\times \mathrm{M}_{n-k,n}\times \mathbb{R}^n
\to\mathbb{R}^{n-k}$ as
$$
	g(z;M,y) = M_1^{-1}(My-M_2z).
$$
Then
$$
	H(M,y) =
	\big\{ \big(g(z;M,y),z\big) : z\in \mathbb{R}^k \big\}.
$$
\end{fact}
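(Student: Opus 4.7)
The plan is to verify the set equality by direct substitution, exploiting the block structure of $M$. Write any candidate point $x\in\mathbb{R}^n$ as $(x_1,x_2)\in\mathbb{R}^{n-k}\times\mathbb{R}^k$, so that $Mx=M_1x_1+M_2x_2$. The defining condition $M(x-y)=0$ of $H(M,y)$ then reads $M_1x_1=My-M_2x_2$, and since $M_1$ is invertible by hypothesis this solves uniquely for $x_1$ as $x_1=M_1^{-1}(My-M_2x_2)=g(x_2;M,y)$, with $x_2$ entirely free.

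This single computation yields both inclusions at once. Setting $z:=x_2$ shows that every $x\in H(M,y)$ has the form $(g(z;M,y),z)$, giving the inclusion $\subseteq$; conversely, for any $z\in\mathbb{R}^k$, the direct check $M_1\,g(z;M,y)+M_2z=(My-M_2z)+M_2z=My$ verifies that $(g(z;M,y),z)$ lies in $H(M,y)$, giving $\supseteq$. There is no real obstacle, since the statement is just the observation that the affine subspace $H(M,y)$ is the graph, over its last $k$ coordinates, of the affine map forced by the invertibility of $M_1$; this explicit parametrization is what will later be exploited to carry out Lipschitz constructions over the compact parameter set for $(M,y)$ in the proof of Proposition~\ref{prop:klee}.
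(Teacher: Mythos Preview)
Your proof is correct; the paper itself states this as a ``trivial fact'' and gives no proof at all, so there is nothing to compare against. Your direct block-decomposition argument is exactly the intended verification.
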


\smallskip

In the following, we denote by $0_p$ the vector in 
$\mathbb{R}^p$ and by $0_{p\times q}$ the matrix in 
$\mathbb{R}^{p\times q}$ all of whose entries are zero, and
by $\mathbf{I}_p$ the identity matrix in $\mathbb{R}^{p\times p}$. We also 
recall 
that $p_K$ denotes the metric projection map of $K$, see 
\S\ref{sec:normalcones}.

The following is the key step in the proof of Proposition \ref{prop:klee}.

\begin{lem}
\label{lem:protoklee}
Fix $K\in\mathcal{K}^n$ and $k\in\{0,\ldots,n-1\}$, and let $S$ be a 
closed ball with $K\subset\intr S$.
For every face
$F$ of $K$ with $\dim N(K,F)=n-k$, there is a nonempty 
open set $O\subseteq \mathrm{M}_{n-k,n}\times\mathbb{R}^n$ so that
$F\subseteq p_K(H(M,y)\cap S)$ for all $(M,y)\in O$.
\end{lem}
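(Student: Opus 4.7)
The plan is to construct one pair $(M_0, y_0)$ explicitly so that a parallel translate of $F$ is contained in $H(M_0, y_0)$, and then to take $O$ to be an open neighborhood on which this containment propagates via a linear implicit function argument. First, fix $x_0 \in \relint F$ and let $U = \mathrm{span}\{x - x_0 : x \in F\}$ be the $d$-dimensional direction of the affine hull of $F$, where $d = \dim F$. A direct argument from the definition of a face (namely, $F \subseteq F(K, u)$ for every $u \in N(K, F)$) shows $U \perp \mathrm{span}\, N(K, F)$; combined with $\dim N(K, F) = n - k$, this forces $d \le k$. Pick $v_0 \in \relint N(K, F)$ with $\|v_0\|$ small enough that $F + v_0 \subseteq \intr S$, which is possible since $F \subset \intr S$ is compact. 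Let $W$ be any linear complement of $U \oplus \mathrm{span}\, N(K, F)$ in $\mathbb{R}^n$ (of dimension $k - d$), set $V = U \oplus W$, and let $M_0 \in \mathrm{M}_{n-k, n}$ be any matrix with $\ker M_0 = V$; set $y_0 = x_0 + v_0$. Then $\dim V = k$ and $V \cap \mathrm{span}\, N(K, F) = \{0\}$ by construction. For any $x \in F$ we have $(x + v_0) - y_0 = x - x_0 \in U \subseteq V = \ker M_0$, so $x + v_0 \in H(M_0, y_0)$; moreover $v_0 \in \relint N(K, F) \subseteq N(K, x)$ (using that $N(K, F) \subseteq N(K, x)$ for every $x \in F$), so $p_K(x + v_0) = x$, and $x + v_0 \in \intr S$ by choice of $v_0$. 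Hence $F \subseteq p_K(H(M_0, y_0) \cap S)$.

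The stability step is the heart of the argument. For $(M, y)$ near $(M_0, y_0)$ and $x \in F$, consider the equation $M v = M(y - x)$ with $v$ restricted to $\mathrm{span}\, N(K, F)$. The restriction $M_0|_{\mathrm{span}\, N(K, F)}$ is a linear bijection onto $\mathbb{R}^{n-k}$ because $\ker M_0 \cap \mathrm{span}\, N(K, F) = \{0\}$ and the dimensions match; bijectivity is an open condition on $M$, so it persists in a neighborhood of $M_0$. This yields a unique solution $v(x; M, y) \in \mathrm{span}\, N(K, F)$ depending continuously (indeed linearly) on $(x, M, y)$, with $v(x; M_0, y_0) = v_0$ for every $x \in F$. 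Since $\relint N(K, F)$ is open in $\mathrm{span}\, N(K, F)$, $\intr S$ is open in $\mathbb{R}^n$, and $F$ is compact, there is an open neighborhood $O$ of $(M_0, y_0)$ in $\mathrm{M}_{n-k, n} \times \mathbb{R}^n$ on which, for every $x \in F$, both $v(x; M, y) \in \relint N(K, F) \subseteq N(K, x)$ and $x + v(x; M, y) \in \intr S \cap H(M, y)$. Therefore $x = p_K(x + v(x; M, y)) \in p_K(H(M, y) \cap S)$, yielding the lemma.

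The most delicate ingredient is the choice of the complementary subspace $V$: it must simultaneously contain $U$ (so that $F + v_0$ lies in $H(M_0, y_0)$) and meet $\mathrm{span}\, N(K, F)$ only at the origin (so that the restricted linear map remains invertible under perturbation of $M$). This is exactly where the bound $\dim F \le k$ is needed, and it in turn requires the hypothesis $\dim N(K, F) = n - k$ via the orthogonality between a face and its normal cone.
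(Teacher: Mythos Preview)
Your proof is correct and follows essentially the same approach as the paper: construct a single pair $(M_0,y_0)$ by shifting $F$ by a vector in $\relint N(K,F)$ into a $k$-flat inside $\intr S$, then use continuity and compactness (the tube lemma) to propagate the conclusion to an open neighborhood. The only difference is cosmetic: the paper rotates so that $N(K,F)\subseteq\mathbb{R}^{n-k}\times\{0_k\}$ and uses the explicit parametrization of Fact~\ref{fact:hslice}, whereas you work coordinate-free by inverting $M|_{\mathrm{span}\,N(K,F)}$---these are the same linear computation, and your $v(x;M,y)$ is exactly the paper's $n(x;M,y)$ in different notation.
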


\begin{proof}
Since the statement is invariant under rotation and translation of $K$,
we can assume without loss of generality that
$$
	N(K,F)\subseteq \mathbb{R}^{n-k}\times \{0_k\},\qquad\quad
	F=\{0_{n-k}\}\times F' \subset \{0_{n-k}\}\times \mathbb{R}^k.
$$
We will first find a single point $(M_*,y_*)$ that satisfies the 
desired conclusion, and then show it is stable under perturbation.

To construct $(M_*,y_*)$, we choose any $y_*\in\relint N(K,F)$ of
sufficiently small norm so that $F+y_*\subset\intr S$, and let
$M_*=[\,\mathbf{I}_{n-k}\, |\, 0_{(n-k)\times k}\,] \in 
\mathrm{M}_{n-k,n}$. Then 
$$
	F + y_* \subset H(M_*,y_*)\cap S,
$$
which implies that $F\subseteq p_K(H(M_*,y_*)\cap S)$ (see 
\S\ref{sec:normalcones}). 

By construction, $y_*=(y_*',0_k)$ for some $y_*'\in\mathbb{R}^{n-k}$.
Fix an open neighborhood $U'$ of $y_*'$ such that
$U=U'\times\{0_k\}\subset N(K,F)$ and $F+U \subset S$. In the following, 
we adopt the notation defined in Fact \ref{fact:hslice}. Note first that
$$
	g(z;M_*,y_*) = y_*'
$$
for all $z\in\mathbb{R}^k$. By continuity of the map $g$ and as 
$\mathbf{I}_{n-k}$ is invertible, there is an open 
neighborhood $\tilde O\subset\mathbb{R}^k\times\mathrm{M}_{n-k,n}\times
\mathbb{R}^n$ of $F'\times \{M_*\}\times\{y_*\}$ so that
$$
	g(z;M,y) \in U'\quad
	\text{and}\quad
	M_1\text{ is invertible}\quad
	\text{for all}\quad
	(z,M,y)\in \tilde O.
$$
By the generalized tube lemma \cite[\S 26, Ex.\ 9]{Mun00}, there exists an 
open neighborhood $O\subset \mathrm{M}_{n-k,n}\times\mathbb{R}^n$ 
of $(M_*,y_*)$ so that $F'\times O\subset \tilde O$.

Denote by
$n(x;M,y)=(g(x';M,y),0_k)$ for any $x=(0_{n-k},x')\in F$.
The choice of $U'$, the construction of $\tilde O$, and
Fact \ref{fact:hslice} ensure that
$$
	x + n(x;M,y) \in H(M,y)\cap S,
	\qquad\quad
	n(x;M,y) \in N(K,F)
$$
for all $(M,y)\in O$ and $x\in F$. Thus $F\subseteq p_K(H(M,y)\cap S)$
for all $(M,y)\in O$.
\end{proof}

We can now complete the proof of Proposition \ref{prop:klee}.

\begin{proof}[Proof of Proposition \ref{prop:klee}]
Fix a closed ball $S$ that contains $K$ in its interior.
For every $\ell\in\{0,\ldots,k\}$, $M\in \mathrm{M}_{n-\ell,n}$, and
$y\in\mathbb{R}^n$, the set $H(M,y)\cap S$ is either empty or a closed 
ball of dimension at most $k$. In the latter case, we can certainly find a 
Lipschitz map $f:[0,1]^k\to H(M,y)$ so that $H(M,y)\cap S\subseteq 
\mathop{\mathrm{Im}}f$.

Let $(f_m)_{m\in\mathbb{N}}$ be a family of such 
maps obtained by applying this construction to all $\ell\in\{0,\ldots,k\}$ 
and all $(M,y)$ in a countable dense subset of 
$\mathrm{M}_{n-\ell,n}\times\mathbb{R}^n$. Lemma \ref{lem:protoklee}
ensures that every face $F$ of $K$ with $\dim N(K,F)=n-\ell \ge n-k$
is contained in $\mathop{\mathrm{Im}}u_m$ for some $m$, where we define
$u_m=p_K\circ f_m$. It remains to note that $u_m$ is Lipschitz as $p_K$ is 
Lipschitz \cite[Theorem 1.2.1]{Sch14}.
\end{proof}

\subsection{Proof of Theorem \ref{thm:upper}}

Throughout the proof of Theorem \ref{thm:upper},
fix $K,L\in\mathcal{K}^n$,
$k\in \{0,\ldots,n-2\}$, and a family $(u_m)_{m\in\mathbb{N}}$ of
Lipschitz maps $u_m:[0,1]^k\to\bd K$ as in Proposition \ref{prop:klee}.
Define the sets $A_m\subseteq S^{n-1}$ as
$$
	A_m = \big\{v\in S^{n-1}: F(K,v)\subseteq\mathop{\mathrm{Im}}u_m\big\}.
$$
Then Proposition \ref{prop:klee} ensures that
$$
	\big\{ v\in S^{n-1} : \dim N(K,F(K,v)) \ge n-k\big\}
	\subseteq
	\bigcup_{m\in\mathbb{N}}A_m.	
$$
By the argument in \S\ref{sec:upperoutline}, it suffices to
show that
$$
	\SM_{(K+tL)[n-1]}(A_m) = O(t^{n-k-1})
	\quad\text{as}\quad t\downarrow 0
$$
for every $m\in\mathbb{N}$.

\begin{proof}[Proof of Theorem \ref{thm:upper}]
The representation \eqref{eq:areameas} yields
$$
	\SM_{(K+tL)[n-1]}(A_m) =
	\mathcal{H}^{n-1}\Bigg(
	\bigcup_{v\in A_m} F(K+tL,v)
	\Bigg).
$$
By the definition of $A_m$ and \eqref{eq:faceaddition}, we have
$$
	\bigcup_{v\in A_m} F(K+tL,v) \subseteq
	({\mathop{\mathrm{Im}}u_m + t L})\cap
	\bd(K+tL).
$$
Let $[0,1]^k=\bigcup_{i=1}^{N_t}B_i$ be a covering of $[0,1]^k$ 
by $N_t =O(t^{-k})$ balls $B_i$ of radius $t$. Then 
$\mathop{\mathrm{Im}}u_m \subseteq \bigcup_{i=1}^{N_t} u_m(B_i)$, and we 
can estimate
$$
	\SM_{(K+tL)[n-1]}(A_m) \le
	\sum_{i=1}^{N_t}
	\mathcal{H}^{n-1}\big(
	(u_m(B_i) + t L)\cap
        \bd(K+tL)
	\big).
$$
Note that $u_m(B_i) + t L$ is contained in a ball
$\tilde B_i$ of radius $(\mathrm{Lip}(u_m)+\mathrm{diam}(L))t$.
As $K_1\cap\bd K_2 \subseteq \bd (K_1\cap K_2)$ for any
convex bodies $K_1,K_2$, we obtain
$$
	\mathcal{H}^{n-1}\big(
	(u_m(B_i) + t L)\cap
        \bd(K+tL)
	\big) \le
	\mathcal{H}^{n-1}\big(
	{\bd\big(\tilde B_i \cap
        (K+tL)\big)}
	\big).
$$
Now recall that $\mathcal{H}^{n-1}(\bd C)=n\V_n(B,C,\ldots,C)$
for every $C\in\mathcal{K}^n$ \cite[(5.53)]{Sch14}, so
$\mathcal{H}^{n-1}(\bd K_1)\le\mathcal{H}^{n-1}(\bd K_2)$ for any convex 
bodies $K_1\subseteq K_2$. Thus
$$
	\mathcal{H}^{n-1}\big(
	{\bd\big(\tilde B_i \cap
        (K+tL)\big)
	}\big)
	\le	
	\mathcal{H}^{n-1}\big(
	{\bd \tilde B_i}
	\big)
	=
	c t^{n-1}
$$
with $c=(\mathrm{Lip}(u_m)+\mathrm{diam}(L))^{n-1}
\mathcal{H}^{n-1}(S^{n-1})$.
We therefore obtain 
$$
	\SM_{(K+tL)[n-1]}(A_m)
	\le 
	ct^{n-1}N_t 
	= O(t^{n-k-1}),
$$
concluding the proof.
\end{proof}

\section{Proof of the lower bound}
\label{sec:pflower}

The aim of this section is to prove Theorem \ref{thm:mainlower}.
We again begin by sketching the main idea behind the proof before 
proceeding to the details.

\subsection{Outline of the proof}
\label{sec:outlinelower}

We aim to characterize $\supp \SM_{K[n-2],L}$ for $K,L\in\mathcal{K}^n$ by 
induction on the dimension $n$. The basic principle for doing so was 
explained in \S\ref{sec:higher}, but the procedure cannot be implemented 
directly as \eqref{eq:ifonly} does not always hold. Instead, we will 
perform the induction in an indirect manner that requires only the case 
$\dim T(K,u)=1$ of \eqref{eq:ifonly}, which holds by Theorem 
\ref{thm:mainproj}.

\begin{rem}
In the following, we will take for granted that Conjecture 
\ref{conj:schneider} is known to hold for the area measure $\SM_{K[n-1]}$,
that is, that
$$
	\supp \SM_{K[n-1]} =
	\cl\big\{u\in S^{n-1}:\dim T(K,u)=1\big\}.
$$
This is a well known result, see, e.g., \cite[Lemma 4.5.2]{Sch14};
we include a short proof in Corollary 
\ref{cor:areasupp} below that follows trivially from the methods used in 
the proof of Theorem \ref{thm:mainproj}.
Note that this implies, in particular, that $\supp \SM_{K[n-2],L}$ is 
characterized for $n=2$, which serves as the base case of our induction
on $n$.
\end{rem}

Let us now sketch the idea behind the induction. By Corollary 
\ref{cor:mainupper} and as the support of a measure is a
closed set, it suffices to show that any $(K[n-2],L)$-extreme 
direction $u\in S^{n-1}$ is contained in $\supp \SM_{K[n-2],L}$.
We fix such a direction $u$ in the following. Let us consider two 
nearly complementary cases.
\smallskip
\begin{enumerate}[1.]
\itemsep\medskipamount
\item If $\dim T(K,u)=1$, then the approach described in 
\S\ref{sec:higher} can be applied directly: Lemma \ref{lem:extremeproj}
yields $v\in u^\perp$ so that $u$ is
$(\proj_{v^\perp}K[n-3],\proj_{v^\perp}L)$-extreme, and thus the induction 
hypothesis implies that $u\in\supp 
\SM_{\proj_{v^\perp}K[n-3],\proj_{v^\perp}L}$. We now conclude that
$u\in \supp \SM_{K[n-2],L}$ by the case $\dim T(K,u)=1$ of Theorem 
\ref{thm:mainproj}.
\item Now suppose that $\dim T(K,u')>1$ for all $u'$ in a neighborhood
of $u$. Then we have $u\not\in \supp \SM_{K[n-1]}$. It therefore suffices to 
show that 
$$
	u\in \supp\big(\SM_{K[n-2],L}+\SM_{K[n-1]}\big) = \supp \SM_{K[n-2],K+L}.
$$
To this end, note that it is easily checked using Definition 
\ref{defn:extreme} that the fact that $u$ is
$(K[n-2],L)$-extreme implies that it is also
$(K[n-2],K+L)$-extreme, as well as that $\dim T(K+L,u)=1$. We can 
therefore once again proceed as in \S\ref{sec:higher} 
to achieve the desired conclusion using Theorem \ref{thm:mainproj}.
\end{enumerate}
\smallskip
These two cases almost, but not quite, suffice to prove Theorem 
\ref{thm:mainlower}: the above arguments leave the case that
$\dim T(K,u)>1$, but $u$ lies on the boundary of the set of
$u'$ with $\dim T(K,u')=1$, unresolved.

To handle this boundary case, we will use to our advantage the fact that 
the measure $\SM_{C_1,\ldots,C_{n-1}}$ does \emph{not} uniquely determine 
the convex bodies $C_1,\ldots,C_{n-1}$. We will show in 
\S\ref{sec:lowermainpf} how one can modify the body $K$ so that it 
satisfies the condition of the second case above, without changing the 
mixed area measure $\SM_{K[n-2],L}$. Then the second case suffices to 
complete the proof of Theorem \ref{thm:mainlower} (and, somewhat 
surprisingly, the first case no longer needs to be considered separately).

\subsection{The case $\dim T(K,u)=1$ of Theorem \ref{thm:mainproj}}
\label{sec:projcase1}

Before we proceed to the proof of Theorem \ref{thm:mainlower}, we must 
prove the case $\dim T(K,u)=1$ of Theorem \ref{thm:mainproj} that is 
needed here. The remaining parts of Theorem \ref{thm:mainproj} will be 
proved in \S\ref{sec:obstruction}.

The proof relies on a characterization of 
directions $u$ with $\dim T(K,u)=1$, which is dual to the 
characterization of extreme boundary points of a convex
body in \cite[Lemma 1.4.6]{Sch14}. Only the implication 
$a\Rightarrow b$ will be used in the sequel.

\begin{lem}
\label{lem:0extcap}
For any $K\in\mathcal{K}^n$ and $u\in S^{n-1}$, the following are 
equivalent.
\smallskip
\begin{enumerate}[a.]
\itemsep\smallskipamount
\item $\dim T(K,u)=1$.
\item For every $\varepsilon>0$, there exists $K'\in\mathcal{K}^n_n$ so 
that $K\subset K'$, $h_{K'}(u)>h_K(u)$, and $h_{K'}(v)=h_K(v)$ for all
$v\in S^{n-1}\backslash B(u,\varepsilon)$.
\end{enumerate}
\end{lem}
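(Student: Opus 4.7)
The strategy for proving $(a)\Rightarrow(b)$ --- the only implication needed in the sequel --- is via polar duality. Since translations of $K$ preserve both the hypothesis and the conclusion of (b), we may translate so that $0\in\intr K$ whenever $K\in\mathcal{K}^n_n$; the lower-dimensional case will be handled separately at the end. Under this assumption $K^\circ$ is a convex body with $0\in\intr K^\circ$, and the bijection of \S\ref{sec:touchingcones} sends the touching cone $T(K,u)=\mathbb{R}_+u$ (one-dimensional by hypothesis) to the face $T(K,u)\cap\bd K^\circ=\{p\}$ of $K^\circ$, where $p:=u/h_K(u)$. Hence $p$ is an \emph{extreme point} of $K^\circ$.

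To build $K'$ we cut off a small neighborhood of $p$ in $K^\circ$ and dualize. Given $\varepsilon>0$, by continuity of the radial map $v\mapsto v/h_K(v)$ on $S^{n-1}$, there exists $\varepsilon'>0$ such that $v\in S^{n-1}\setminus B(u,\varepsilon)$ implies $v/h_K(v)\notin p+\varepsilon'B$. Set
$$
L^*\;:=\;\mathrm{conv}\bigl(K^\circ\setminus(p+\varepsilon'B)\bigr).
$$
Then $L^*$ is a convex body in $\mathcal{K}^n_{(o)}$: it is compact as the convex hull of a compact set, and it contains a neighborhood of $0$ for $\varepsilon'$ small since $0\in\intr K^\circ$ is separated from $p+\varepsilon'B$. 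Moreover $L^*\subseteq K^\circ$ and $K^\circ\setminus L^*\subseteq p+\varepsilon'B$ by construction, and crucially $p\notin L^*$: by Carath\'eodory every point of $L^*$ is a convex combination of finitely many points of $K^\circ\setminus(p+\varepsilon'B)\subseteq K^\circ\setminus\{p\}$, so $p\in L^*$ would contradict the extremity of $p$.

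Setting $K':=(L^*)^\circ$, the polar identity $h_{K'}(\cdot)=\|\cdot\|_{L^*}$ converts the properties of $L^*$ back into support-function statements about $K'$. The inclusion $L^*\subseteq K^\circ$ gives $K=K^{\circ\circ}\subseteq K'$, and $K'\in\mathcal{K}^n_n$ since $0\in\intr L^*$. Since $p\notin L^*$ while $0\in\intr L^*$, the ray $\mathbb{R}_+ p$ exits $L^*$ at a parameter strictly less than $1$, so $\|p\|_{L^*}>1=\|p\|_{K^\circ}$ and hence $h_{K'}(u)=\|u\|_{L^*}=h_K(u)\,\|p\|_{L^*}>h_K(u)$. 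For $v\in S^{n-1}\setminus B(u,\varepsilon)$, the radial image $v/h_K(v)\in\bd K^\circ$ lies outside $p+\varepsilon'B$ by the choice of $\varepsilon'$, hence in $L^*$; this forces $\|v\|_{L^*}=\|v\|_{K^\circ}=h_K(v)$, i.e., $h_{K'}(v)=h_K(v)$.

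The main obstacle is the reduction to the full-dimensional case: polar duality as used above requires $0\in\intr K$. A direct computation shows that when $\dim K<n$ the hypothesis $\dim T(K,u)=1$ forces $\dim K=n-1$ with $u$ a unit normal to the affine hull of $K$ (since for such lower-dimensional $K$, the normal cone at the whole body equals the orthogonal complement of its direction subspace, which has dimension $n-\dim K$). In this case $K'$ is constructed directly as a thin ``tent'' over $K$ in the direction $u$, e.g., as the intersection of the halfspaces $H^-_{v,h_K(v)}$ for $v$ in a finite dense subset of $S^{n-1}\setminus B(u,\varepsilon)$ together with the bounding halfspace $H^-_{u,h_K(u)+\delta}$ for sufficiently small $\delta>0$; one checks that the resulting body lies in $\mathcal{K}^n_n$ and has the required properties.
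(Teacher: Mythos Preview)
Your argument for the full-dimensional case is correct and follows the same basic strategy as the paper: pass to the polar, identify $p=u/h_K(u)$ as an extreme point of $K^\circ$, cut off a neighborhood of $p$, and dualize back. The paper cuts with a single hyperplane (invoking \cite[Lemma~1.4.6]{Sch14}), while you cut by taking $L^*=\mathrm{conv}\bigl(K^\circ\setminus(p+\varepsilon'B)\bigr)$; both work, and your version has the minor advantage of being self-contained rather than citing an external lemma.

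The lower-dimensional case, however, has a genuine gap. Intersecting the halfspaces $H^-_{v,h_K(v)}$ over a \emph{finite} subset of $S^{n-1}\setminus B(u,\varepsilon)$ produces a polytope $K'$, and a polytope cannot satisfy $h_{K'}(v)=h_K(v)$ for \emph{every} $v$ in the open set $S^{n-1}\setminus B(u,\varepsilon)$ unless $K$ itself happens to be a polytope there. The requirement in (b) is exact equality on the whole complement of $B(u,\varepsilon)$, not merely on a finite or dense subset. The fix is either to intersect over \emph{all} $v\in S^{n-1}\setminus B(u,\varepsilon)$ (and then argue that the resulting body is $n$-dimensional and has $h_{K'}(u)>h_K(u)$, which takes some work), or---much more simply, as the paper does---to take $K'=\mathrm{conv}(K\cup\{x_0+tu\})$ for any $x_0\in K$ and small $t>0$: the only supporting hyperplanes of $K'$ that are not already supporting hyperplanes of $K$ are those touching the new apex, and their normals converge to $u$ as $t\downarrow 0$.
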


\begin{proof}
We first prove $a\Rightarrow b$. As the statement is invariant 
under translation, we assume without loss of generality that 
$\relint K$ contains the origin.

If $\dim K\le n-2$, then $\dim T(K,u)>1$ for all $u$ and there is nothing 
to prove. If $\dim K=n-1$, then $\dim T(K,u)=1$ implies that $u\in 
K^\perp$. Let $K'=\mathrm{conv}\{K,tu\}$ with $t>0$. Then 
$h_{K'}(u)=t>0=h_K(u)$. Moreover, for every $\varepsilon>0$, we can choose 
$t>0$ so that $h_{K'}=h_K$ on $S^{n-1}\backslash 
B(u,\varepsilon)$, since as $t\downarrow 0$ the normal direction of each 
supporting hyperplane of $K'$ that does not touch $K$ converges to $u$.

It remains to consider the case that $\dim K=n$, so that 
$K\in\mathcal{K}^n_{(o)}$.
By the duality explained in \S\ref{sec:touchingcones}, $\dim T(K,u)=1$
implies that 
$$
	T(K,u)\cap \bd K^\circ = \{x\}
	\quad\text{with}\quad
	x=\frac{u}{\|u\|_{K^\circ}}
$$
is a $0$-dimensional face of $K^\circ$. By \cite[Lemma 1.4.6]{Sch14},
there exists for every $\varepsilon>0$ a choice of
$v\in S^{n-1}$ and $t\in\mathbb{R}_+$ so that $x\in\intr H^+_{v,t}$ and
$$
	K^\circ \cap H^+_{v,t} \subset \mathbb{R}_+B(u,\varepsilon).
$$
Now define $K' = (K^\circ \cap H^-_{v,t})^\circ \supset K$. Then
$$
	h_{K'}(v) = \|v\|_{K^\circ \cap H^-_{v,t}}
	= \|v\|_{K^\circ} = h_K(v)
$$
for all $v\in S^{n-1}\backslash B(u,\varepsilon)$. Moreover, as
$x\not\in K^\circ \cap H^-_{v,t}$, we have
$$
	h_{K'}(u)
	= \|u\|_{K^\circ \cap H^-_{v,t}} > \|u\|_{K^\circ} =
	h_K(u),
$$
which concludes the proof of $a\Rightarrow b$.

To prove the converse implication $b\Rightarrow a$, suppose that
$\dim T(K,u)>1$. Since $u\in \relint T(K,u)$, we can find
distinct $v_1,v_2\in T(K,u)$ so that $u = \frac{v_1+v_2}{2}$.
Now assume for sake of contradiction that the conclusion of part $b$ 
holds, and choose $\varepsilon>0$ sufficiently small that
$v_1,v_2\not\in B(u,\varepsilon)$. Then
$$
	h_K(u) = \frac{h_K(v_1)+h_K(v_2)}{2} =
	\frac{h_{K'}(v_1)+h_{K'}(v_2)}{2}
	\ge h_{K'}(u),
$$
where we used that $h_K$ is affine on $T(K,u)$ in the first equality, and 
that $h_{K'}$ is 
convex in the inequality. This entails a contradiction, concluding the 
proof.
\end{proof}

Recall the following classical result 
\cite[Theorems 7.3.1 and 7.4.2]{Sch14}.

\begin{thm}[Alexandrov--Fenchel inequality]
\label{thm:af}
For any $K,L,C_1,\ldots,C_{n-2}\in\mathcal{K}^n$
$$
	\V_n(K,L,C_1,\ldots,C_{n-2})^2 \ge
	\V_n(K,K,C_1,\ldots,C_{n-2})\,
	\V_n(L,L,C_1,\ldots,C_{n-2}).
$$
If in addition $\V_n(K,L,C_1,\ldots,C_{n-2})>0$, then equality
holds if and only if
$$
	\SM_{K,C_1,\ldots,C_{n-2}} = 
	\frac{\V_n(K,K,C_1,\ldots,C_{n-2})}{\V_n(K,L,C_1,\ldots,C_{n-2})}\,
	\SM_{L,C_1,\ldots,C_{n-2}}.
$$
\end{thm}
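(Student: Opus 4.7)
The plan is to reduce to smooth strictly convex bodies by approximation, and then establish both the inequality and the equality case via the spectral theory of an associated symmetric bilinear form combined with an induction on the dimension~$n$.

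First I would use that every convex body is a Hausdorff limit of smooth strictly convex bodies (obtained, say, by convolving support functions with a mollifier on $S^{n-1}$ and adding a small multiple of~$B$). Since mixed volumes and mixed area measures are continuous in the Hausdorff topology, the inequality immediately passes to arbitrary convex bodies once established in the smooth strictly convex case. For the equality case, the condition $\V_n(K,L,C_1,\ldots,C_{n-2})>0$ is stable under sufficiently small perturbations, so an approximation argument will also transfer the proportionality conclusion, provided the approximations are chosen so that the proportionality constant behaves continuously in the limit.

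Second, in the smooth strictly convex setting I would introduce the symmetric bilinear form $Q(f,g)=\V_n(f,g,C_1,\ldots,C_{n-2})$, using the extension to differences of support functions recalled in \S\ref{sec:mvma}. The Alexandrov--Fenchel inequality is then the assertion that $Q$ has signature $(1,-,-,\ldots)$ on the relevant space of functions modulo linear functions: that is, $Q$ has exactly one positive eigenvalue, with the remaining spectrum non-positive. I would establish this by induction on $n$: the base case $n=2$ follows from the one-dimensional Brunn--Minkowski inequality by direct computation, and the inductive step derives the signature from a suitable $(n-1)$-dimensional Alexandrov--Fenchel inequality by using the projection formula for mixed volumes from \S\ref{sec:mvma} together with a continuity/perturbation argument in the~$C_i$.

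Third, for the equality case under the additional hypothesis $\V_n(K,L,C_1,\ldots,C_{n-2})>0$, set $\lambda = \V_n(K,K,C_1,\ldots,C_{n-2}) / \V_n(K,L,C_1,\ldots,C_{n-2})$ and $f = h_K - \lambda h_L$. A direct computation gives $Q(f,h_K)=0$ by the definition of $\lambda$, and equality in the inequality forces $Q(f,f)=0$ and then also $Q(f,h_L)=0$. Since $Q(h_K)>0$ and $Q$ has Lorentzian signature, the relations $Q(f,h_K)=0$ and $Q(f,f)=0$ force $f$ to lie in the kernel of the symmetric operator associated with $Q$, which is equivalent to $\SM_{f,C_1,\ldots,C_{n-2}}=0$. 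This is exactly the asserted proportionality of mixed area measures.

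The main obstacle is the inductive spectral-signature step: this is the analytic heart of Alexandrov--Fenchel theory and requires nontrivial work (classically via Alexandrov's continuity/eigenvalue-perturbation approach for smooth bodies, or Hilbert's combinatorial spectral analysis for polytopes with a common normal fan). A secondary difficulty is the approximation in the equality case, where one must ensure that weak limits preserve the positivity condition and the continuity of the proportionality constant; this typically requires tailoring the approximating sequences to the specific bodies $K$ and~$L$ under consideration.
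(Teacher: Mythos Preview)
The paper does not prove this theorem: it is stated as a classical result with a citation to \cite[Theorems 7.3.1 and 7.4.2]{Sch14}, and no argument is given. There is thus nothing in the paper to compare your proposal against beyond noting that your sketch is one of the standard routes to this classical fact.

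On its own merits, your outline of the inequality via smooth approximation and the Lorentzian-signature (spectral) argument is correct in broad strokes. The equality case, however, has a genuine gap that you flag but do not close. Step~2 establishes the Lorentzian signature of $Q$ only when $C_1,\ldots,C_{n-2}$ are smooth and strictly convex, whereas step~3 invokes it for the original bodies; and the approximation you propose in step~1 cannot transfer the equality characterization, since equality for $K,L,C_i$ tells you nothing about equality for approximants. The standard resolution (which is the actual content of \cite[Theorem 7.4.2]{Sch14}) is to deduce, from the inequality for \emph{arbitrary} bodies, that $Q$ is negative semidefinite on the $Q$-orthogonal complement of any $h$ with $Q(h,h)>0$, and only then run your kernel argument directly for the given $C_i$ without any further approximation. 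A smaller point: you assume $Q(h_K,h_K)>0$, but the hypothesis $\V_n(K,L,C_1,\ldots,C_{n-2})>0$ does not guarantee this (take $K$ a segment); one should use a body such as $K+L$ as the positive vector instead.
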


\smallskip

The following is the main observation of this section.

\begin{lem}
\label{lem:0extaf}
Let $K,C_1,\ldots,C_{n-2}\in\mathcal{K}^n$ and $u\in S^{n-1}$ with $\dim 
T(K,u)=1$. 
If $u\not\in\supp \SM_{K,C_1,\ldots,C_{n-2}}$, then
$u\not\in\supp \SM_{L,C_1,\ldots,C_{n-2}}$ for every $L\in\mathcal{K}^n$.
\end{lem}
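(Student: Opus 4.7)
The plan is to exploit Lemma \ref{lem:0extcap} to produce a body strictly larger than $K$ in a neighborhood of $u$, and then use the equality case of the Alexandrov--Fenchel inequality to show that the mixed area measure does not detect this enlargement. Concretely, fix $\varepsilon>0$ so that $B(u,\varepsilon)\cap \supp \SM_{K,C_1,\ldots,C_{n-2}}=\varnothing$, and apply Lemma \ref{lem:0extcap} (which uses $\dim T(K,u)=1$) to obtain $K'\in\mathcal{K}^n_n$ with $K\subset K'$, $h_{K'}(u)>h_K(u)$, and $h_{K'}=h_K$ off $B(u,\varepsilon)$. Writing $f=h_{K'}-h_K\ge 0$, the integral representation of mixed volumes gives
\[
	\V_n(K,K',C_1,\ldots,C_{n-2}) - \V_n(K,K,C_1,\ldots,C_{n-2}) = \tfrac{1}{n}\int f\,d\SM_{K,C_1,\ldots,C_{n-2}} = 0
\]
because $f$ is supported in $\overline{B(u,\varepsilon)}$ which is disjoint from $\supp\SM_{K,C_1,\ldots,C_{n-2}}$.

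Next I combine this identity with the Alexandrov--Fenchel inequality (Theorem \ref{thm:af}) applied to the pair $(K,K')$, together with monotonicity $\V_n(K',K',\ldots)\ge \V_n(K,K',\ldots)$. This yields
\[
	\V_n(K,K,\ldots)^2 = \V_n(K,K',\ldots)^2 \ge \V_n(K,K,\ldots)\,\V_n(K',K',\ldots) \ge \V_n(K,K,\ldots)^2,
\]
so all three mixed volumes coincide. In the main case $\V_n(K,K',\ldots)>0$, the equality case in Theorem \ref{thm:af} gives $\SM_{K,C_1,\ldots,C_{n-2}} = c\,\SM_{K',C_1,\ldots,C_{n-2}}$ with proportionality constant $c = \V_n(K,K,\ldots)/\V_n(K,K',\ldots)=1$, so the two mixed area measures are \emph{identical}. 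Consequently, for every $L\in\mathcal{K}^n$, symmetry of mixed volumes yields
\[
	\int f\,d\SM_{L,C_1,\ldots,C_{n-2}} = \int h_L\,d\bigl(\SM_{K',C_1,\ldots,C_{n-2}}-\SM_{K,C_1,\ldots,C_{n-2}}\bigr) = 0.
\]
Since $f$ is continuous with $f(u)>0$, it is strictly positive on some open neighborhood $V\ni u$, which forces $\SM_{L,C_1,\ldots,C_{n-2}}(V)=0$, i.e., $u\not\in\supp\SM_{L,C_1,\ldots,C_{n-2}}$.

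The main obstacle is the degenerate case $\V_n(K,K',C_1,\ldots,C_{n-2})=0$, where the equality clause of Alexandrov--Fenchel does not directly apply. In this situation monotonicity forces $\V_n(K,K,\ldots)=0$ as well. Since $K'\in\mathcal{K}^n_n$ we may translate the pair $(K,K')$ together (using translation invariance of mixed volumes and mixed area measures) so that $0\in\intr K'$, ensuring $h_{K'}>0$ on $S^{n-1}$; then $\int h_{K'}\,d\SM_{K,C_1,\ldots,C_{n-2}} = n\V_n(K',K,\ldots)=0$ forces $\SM_{K,C_1,\ldots,C_{n-2}}=0$, and the same argument with the roles swapped forces $\SM_{K',C_1,\ldots,C_{n-2}}=0$. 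With both measures vanishing, the symmetry identity above once more gives $\int f\,d\SM_{L,C_1,\ldots,C_{n-2}}=0$, completing the proof.
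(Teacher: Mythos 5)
The main case of your argument ($\V_n(K,K',C_1,\ldots,C_{n-2})>0$) is correct and is essentially the paper's proof: show $\V_n(K,K',\ldots)=\V_n(K,K,\ldots)$ by disjointness of supports, force equality in Alexandrov--Fenchel via monotonicity, and read off $\SM_{K',\ldots}=\SM_{K,\ldots}$ from the equality clause, then test against $h_L$.

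There is, however, a genuine gap in your degenerate case $\V_n(K,K',C_1,\ldots,C_{n-2})=0$. After translating so that $0\in\intr K'$ and $h_{K'}>0$, you correctly deduce $\SM_{K,C_1,\ldots,C_{n-2}}=0$. But the claim that ``the same argument with the roles swapped'' gives $\SM_{K',C_1,\ldots,C_{n-2}}=0$ does not go through. If you mean integrating $h_K$ against $\SM_{K',\ldots}$ and using $\V_n(K,K',\ldots)=0$: the function $h_K$ need not be strictly positive. Indeed, $\dim T(K,u)=1$ only forces $\dim K\ge n-1$, and when $\dim K=n-1$ there is no translate of $K$ containing the origin in its interior, so $h_K$ vanishes on $K^\perp\cap S^{n-1}$ and the vanishing of $\int h_K\,d\SM_{K',\ldots}$ does not force $\SM_{K',\ldots}=0$. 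If instead you mean integrating $h_{K'}$ against $\SM_{K',\ldots}$: that produces $n\V_n(K',K',\ldots)$, and you have not shown that this mixed volume vanishes. (It does not follow from monotonicity, which goes the wrong way, nor from Alexandrov--Fenchel, which is vacuous when $\V_n(K,K',\ldots)=0$.) The paper closes this case differently: since $\dim K'=n$ and $\dim K\ge n-1$, any $I$ in Fact~\ref{fact:mvpost}c that witnesses $\V_n(K',K,C_1,\ldots,C_{n-2})=0$ cannot involve $K$ or $K'$, so there must exist $I\subseteq[n-2]$ with $\dim C_I<|I|$; this single dimensional defect in the $C_i$'s then forces $\SM_{L,C_1,\ldots,C_{n-2}}=0$ for \emph{every} $L$ at once (its total mass $n\V_n(B,L,C_1,\ldots,C_{n-2})$ vanishes by the same $I$), which is the desired conclusion without any further integration.
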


\begin{proof}
As we assume that $u\not\in\supp \SM_{K,C_1,\ldots,C_{n-2}}$, 
there exists $\varepsilon>0$ so that 
$\SM_{K,C_1,\ldots,C_{n-2}}(B(u,\varepsilon))=0$.
Let $K'\supset K$ be the convex body that is provided by part 
$b$ of Lemma \ref{lem:0extcap} for this choice of $\varepsilon$.

Suppose that $\V_n(K',K,C_1,\ldots,C_{n-2})=0$. Recall that
$\dim K'=n$ by construction, while $\dim K\ge n-1$ as $\dim 
T(K,u)=1$. Thus by Fact~\ref{fact:mvpost}, there exists $I\subseteq[n-2]$
so that $\dim C_I < |I|$. Then Fact~\ref{fact:mvpost} readily yields
that $\SM_{L,C_1,\ldots,C_{n-2}}=0$ for every $L\in\mathcal{K}^n$, 
concluding the proof in this case.

We may therefore assume that $\V_n(K',K,C_1,\ldots,C_{n-2})>0$.
Note that by construction $h_{K'}=h_K$ on
$S^{n-1}\backslash B(u,\varepsilon)
\supseteq \supp \SM_{K,C_1,\ldots,C_{n-2}}$, so that
$$
	\V_n(K',K,C_1,\ldots,C_{n-2}) = 
	\V_n(K,K,C_1,\ldots,C_{n-2}).
$$
Therefore
\begin{align*}
	\V_n(K',K,C_1,\ldots,C_{n-2})^2 &=
	\V_n(K',K,C_1,\ldots,C_{n-2}) \,\V_n(K,K,C_1,\ldots,C_{n-2}) \\
	&\le
	\V_n(K',K',C_1,\ldots,C_{n-2}) \,\V_n(K,K,C_1,\ldots,C_{n-2}),
\end{align*}
where we used that $K\subseteq K'$. Thus Theorem \ref{thm:af} yields
$\SM_{h_{K'}-h_K,C_1,\ldots,C_{n-2}}=0$. In particular, integrating the 
function $h_L$ with respect to this measure yields
$$
	0 =
	\V_{n}(L,h_{K'}-h_K,C_1,\ldots,C_{n-2}) =
	\frac{1}{n}\int (h_{K'}-h_K)\,
	d\SM_{L,C_1,\ldots,C_{n-2}}
$$
for every $L\in\mathcal{K}^n$. Since the function $h_{K'}-h_K$ is 
nonnegative on $S^{n-1}$ and is strictly 
positive in a neighborhood of $u$, we conclude that
$u\not\in \supp \SM_{L,C_1,\ldots,C_{n-2}}$.
\end{proof}

The following follows directly.

\begin{proof}[Proof of the case $\dim T(K,u)=1$ of
Theorem \ref{thm:mainproj}]
The conclusion is immediate by choosing $L=[0,v]$ in
Lemma \ref{lem:0extaf} and applying the projection formula
in \S\ref{sec:mvma}.
\end{proof}

As a further simple illustration of the utility of Lemma \ref{lem:0extaf}, 
let us give another proof of the following well known result (see,
e.g., \cite[\S 4.5]{Sch14}).

\begin{cor}
\label{cor:areasupp}
For any $K\in\mathcal{K}^n$, we have
$$
	\supp \SM_{K[n-1]} =
	\cl\big\{u\in S^{n-1}:\dim T(K,u)=1\big\}.
$$
\end{cor}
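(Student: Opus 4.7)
The inclusion $\supp \SM_{K[n-1]} \subseteq \cl\{u: \dim T(K,u) = 1\}$ will follow directly from Corollary~\ref{cor:mainupper} specialized to $C_1 = \cdots = C_{n-1} = K$. Since touching cones are invariant under positive scaling, $T(K_I, u) = T(K, u)$ for every $I \subseteq [n-1]$, so the $(K, \ldots, K)$-extreme condition of Definition~\ref{defn:extreme} reduces to $\dim T(K, u)^\perp \ge n-1$, i.e., $\dim T(K, u) \le 1$. Combined with $\dim T(K, u) \ge 1$ (which holds since $u \in \relint T(K, u)$ is a nonzero vector), the set of $(K, \ldots, K)$-extreme directions is exactly $\{u: \dim T(K, u) = 1\}$, and the inclusion is immediate.

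For the reverse inclusion $\{u: \dim T(K, u) = 1\} \subseteq \supp \SM_{K[n-1]}$ (which suffices, as $\supp \SM_{K[n-1]}$ is already closed), the plan is to apply Lemma~\ref{lem:0extaf} iteratively in order to reduce to the area measure of the Euclidean ball. Suppose toward contradiction that $\dim T(K, u) = 1$ but $u \notin \supp \SM_{K[n-1]}$. A first application of Lemma~\ref{lem:0extaf} with $C_1 = \cdots = C_{n-2} = K$ yields $u \notin \supp \SM_{L, K[n-2]}$ for every $L \in \mathcal{K}^n$; specializing to $L = B$ gives $u \notin \supp \SM_{B, K[n-2]}$. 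Using the symmetry of mixed area measures in their arguments, I reorder this as $\SM_{K, B, K[n-3]}$ and reapply Lemma~\ref{lem:0extaf} (legal because its hypothesis is a condition on $K$ alone), again with $L = B$, to obtain $u \notin \supp \SM_{B[2], K[n-3]}$. Iterating this procedure $n-1$ times in total, each step converting one remaining copy of $K$ into a copy of $B$, eventually produces $u \notin \supp \SM_{B[n-1]}$.

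This is absurd: by the representation~\eqref{eq:areameas}, $\SM_{B[n-1]}(A) = \mathcal{H}^{n-1}(n_B^{-1}(A)) = \mathcal{H}^{n-1}(A)$ for every Borel $A \subseteq S^{n-1}$, so $\SM_{B[n-1]}$ is the spherical Lebesgue measure and its support is all of $S^{n-1}$. This contradicts $u \notin \supp \SM_{B[n-1]}$, completing the proof. I do not anticipate any real obstacle; the only thing that needs careful bookkeeping is the iterative replacement scheme, which amounts to noting that the hypothesis of Lemma~\ref{lem:0extaf} is a condition on the distinguished body alone and that at least one copy of $K$ remains available to play this role during each of the first $n-1$ iterations.
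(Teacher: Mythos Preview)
Your proof is correct and follows the same approach as the paper: the upper bound via Corollary~\ref{cor:mainupper}, and the lower bound by iterated application of Lemma~\ref{lem:0extaf} with $L=B$ to reach the contradiction $u\notin\supp\SM_{B[n-1]}$. Your write-up supplies more detail on why the $(K,\ldots,K)$-extreme condition reduces to $\dim T(K,u)=1$ and on the bookkeeping of the iteration, but the argument is essentially identical to the paper's.
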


\begin{proof}
As the upper bound holds by Corollary \ref{cor:mainupper}, it suffices to 
show that every $u\in S^{n-1}$ with $\dim T(K,u)=1$ satisfies
$u\in \supp \SM_{K[n-1]}$. Suppose the latter does not hold. Then we may 
repeatedly apply Lemma \ref{lem:0extaf} with $L=B$ to conclude that
$u\not\in \SM_{B[n-1]}$. But this entails a contradiction, as $\SM_{B[n-1]}$ 
is the Lebesgue measure on 
$S^{n-1}$ by \eqref{eq:areameas} and thus $\supp \SM_{B[n-1]}=S^{n-1}$.
\end{proof}

\subsection{Proof of Theorem \ref{thm:mainlower}}
\label{sec:lowermainpf}

As was explained in \S\ref{sec:outlinelower}, the main difficulty in the 
proof is to modify $K$ so that it has the desired properties without 
changing the mixed area measure. We begin with a useful observation.

\begin{lem}
\label{lem:wulff}
Let $K\in\mathcal{K}^n$, $E\subseteq S^{n-1}$, and
$f:E\to\mathbb{R}$ such that
$$
	K = \bigcap_{v\in E} H_{v,f(v)}^-.
$$
Then every $u\in S^{n-1}$ with $\dim T(K,u)=1$ is in the closure
of $E$.
\end{lem}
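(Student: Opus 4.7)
The plan is to reduce directly to the characterization of $\dim T(K,u) = 1$ provided by the implication $a \Rightarrow b$ of Lemma \ref{lem:0extcap}, and derive a contradiction if $u$ were isolated from $E$. The key observation is that the representation $K = \bigcap_{v \in E} H_{v,f(v)}^-$ implies the one-sided inequality $h_K(v) \le f(v)$ for every $v \in E$, and thus any convex body $K'$ whose support function agrees with $h_K$ on $E$ must again be contained in $\bigcap_{v \in E} H_{v,f(v)}^- = K$.

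Concretely, I would argue by contradiction. Suppose $u \notin \cl E$, and fix $\varepsilon > 0$ small enough that $B(u,\varepsilon) \cap E = \varnothing$. Apply Lemma \ref{lem:0extcap} with this $\varepsilon$ to obtain $K' \in \mathcal{K}^n_n$ with $K \subset K'$, $h_{K'}(u) > h_K(u)$, and $h_{K'}(v) = h_K(v)$ for all $v \in S^{n-1} \setminus B(u,\varepsilon)$. Since $E \subseteq S^{n-1} \setminus B(u,\varepsilon)$, we deduce that for every $v \in E$,
$$h_{K'}(v) = h_K(v) \le f(v),$$
so $K' \subseteq H_{v,f(v)}^-$ for every $v \in E$. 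Intersecting over $v \in E$ gives $K' \subseteq K$, contradicting the strict inclusion $K \subsetneq K'$ that follows from $h_{K'}(u) > h_K(u)$.

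There is essentially no obstacle here beyond invoking the right tool: the whole content is packaged in Lemma \ref{lem:0extcap}, which produces a strict one-sided enlargement of $K$ in the direction $u$ that does not touch the support function away from an arbitrarily small spherical cap around $u$. Once that enlargement is available, the defining property of $E$ as an outer representation of $K$ rules out its existence unless $u$ lies in the closure of $E$. Note in particular that the full strength of Lemma \ref{lem:0extcap} is used — the construction must strictly increase $h_K(u)$ without increasing $h_K$ anywhere on $E$, which is exactly what the condition $\dim T(K,u) = 1$ guarantees.
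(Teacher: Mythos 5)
Your proof is correct and is essentially identical to the paper's: both argue by contradiction, apply Lemma~\ref{lem:0extcap}(b) with an $\varepsilon$ so that $B(u,\varepsilon)\cap E=\varnothing$, and conclude $K'\subseteq\bigcap_{v\in E}H^-_{v,f(v)}=K$ from $h_{K'}|_E=h_K|_E\le f|_E$, contradicting $h_{K'}(u)>h_K(u)$.
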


\begin{proof}
Let $u\in S^{n-1}$ with $\dim T(K,u)=1$, and suppose that $u\not\in\cl E$.
Choose $\varepsilon>0$ so that $B(u,\varepsilon)\cap E = \varnothing$,
and let $K'\supset K$ be the resulting convex body that is provided by 
part $b$ of Lemma \ref{lem:0extcap}. Then $h_{K'}(v)=h_K(v)$ for all $v\in 
E$ and thus
$$
	K' \subseteq \bigcap_{v\in E} H_{v,h_{K'}(v)}^- =
	\bigcap_{v\in E} H_{v,h_{K}(v)}^-
	\subseteq \bigcap_{v\in E} H_{v,f(v)}^- = K,
$$
where we used the obvious fact that $h_K(v)\le f(v)$ for every $v\in 
S^{n-1}$. This entails a contradiction, since $h_{K'}(u)>h_K(u)$.
\end{proof}

We can now construct the desired modification of $K$.

\begin{lem}
\label{lem:maxbody}
Let $K,C_1,\ldots,C_{n-2}\in\mathcal{K}^n$, $u\in S^{n-1}$, and 
$\varepsilon>0$ so that $B(u,\varepsilon)$ lies strictly
inside a hemisphere.
Suppose that $\SM_{K,C_1,\ldots,C_{n-2}}(B(u,\varepsilon))=0$. Then
$$
	\hat K = \bigcap_{v\in S^{n-1}\backslash B(u,\varepsilon)}
	H^-_{v,h_K(v)} \supseteq K
$$
is a convex body that satisfies the following properties.
\smallskip
\begin{enumerate}[a.]
\itemsep\smallskipamount
\item $\SM_{\hat K,C_1,\ldots,C_{n-2}}=\SM_{K,C_1,\ldots,C_{n-2}}$.
\item $\dim T(\hat K,v)>1$ for every $v\in B(u,\varepsilon)$.
\item If in addition $u\in \supp \SM_{B,C_1,\ldots,C_{n-2}}$, then
$T(\hat K,u)\subseteq T(K,u)$.
\end{enumerate}
\end{lem}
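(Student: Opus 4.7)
My plan is to verify the three claims in turn by appealing to the Alexandrov--Fenchel equality argument of Lemma~\ref{lem:0extaf}, the Wulff-shape Lemma~\ref{lem:wulff}, and the touching-cone Lemma~\ref{lem:touchincl}. First, $\hat K\in\mathcal{K}^n$: it is closed and convex as an intersection of half-spaces, and it is bounded since the hypothesis that $B(u,\varepsilon)$ lies strictly inside a hemisphere ensures that $S^{n-1}\backslash B(u,\varepsilon)$ positively spans $\mathbb{R}^n$. The inclusions $K\subseteq\hat K\subseteq H^-_{v,h_K(v)}$ for $v\in S^{n-1}\backslash B(u,\varepsilon)$ give $h_{\hat K}=h_K$ on this set.

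For part \textit{a.}, I would run the argument of Lemma~\ref{lem:0extaf}: since $\SM_{K,C_1,\ldots,C_{n-2}}$ is supported in $S^{n-1}\backslash B(u,\varepsilon)$, where $h_{\hat K}=h_K$, I obtain
$$\V_n(\hat K,K,C_1,\ldots,C_{n-2})=\tfrac{1}{n}\int h_{\hat K}\,d\SM_{K,C_1,\ldots,C_{n-2}}=\V_n(K,K,C_1,\ldots,C_{n-2}).$$
Combined with the monotonicity $\V_n(\hat K,\hat K,\ldots)\ge\V_n(\hat K,K,\ldots)$ that follows from $K\subseteq\hat K$, this saturates Theorem~\ref{thm:af}, whose equality case then yields $\SM_{\hat K,C_1,\ldots,C_{n-2}}=\SM_{K,C_1,\ldots,C_{n-2}}$; the degenerate case $\V_n(\hat K,K,\ldots)=0$ is handled via Fact~\ref{fact:mvpost} as in the proof of Lemma~\ref{lem:0extaf}. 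Part \textit{b.}\ is immediate from Lemma~\ref{lem:wulff} with $E=S^{n-1}\backslash B(u,\varepsilon)$: the set $E$ is closed, so any $v\in B(u,\varepsilon)$ lies outside $\cl E=E$, and the contrapositive of Lemma~\ref{lem:wulff} forces $\dim T(\hat K,v)>1$.

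Part \textit{c.}\ is the most delicate. I would first combine part \textit{a.}\ with the symmetry of mixed volumes to obtain
$$\int (h_{\hat K}-h_K)\,d\SM_{B,C_1,\ldots,C_{n-2}}=n\V_n(B,\hat K,C_1,\ldots,C_{n-2})-n\V_n(B,K,C_1,\ldots,C_{n-2})=0.$$
Since $h_{\hat K}-h_K\ge 0$ is continuous, the open set $\{v:h_{\hat K}(v)>h_K(v)\}$ must be $\SM_{B,C_1,\ldots,C_{n-2}}$-null, and the hypothesis $u\in\supp\SM_{B,C_1,\ldots,C_{n-2}}$ then forces $h_{\hat K}(u)=h_K(u)$. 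From this identity I would deduce $F(K,u)\subseteq F(\hat K,u)$: for any $y\in F(K,u)$ one has $\langle u,y\rangle=h_K(u)=h_{\hat K}(u)$ and $y\in K\subseteq\hat K$. For such a $y$ and any $w\in N(\hat K,y)$, the chain $\langle w,y\rangle=h_{\hat K}(w)\ge h_K(w)\ge\langle w,y\rangle$ (using $y\in K$) forces $w\in N(K,y)$, whence $N(\hat K,y)\subseteq N(K,y)$. Applying Lemma~\ref{lem:touchincl} with $u\in N(\hat K,y)$ finally yields $T(\hat K,u)\subseteq T(K,u)$. The main obstacle is precisely this last chain in part \textit{c.}: leveraging the support hypothesis together with continuity to upgrade an $\SM_{B,\ldots}$-a.e.\ equality into a pointwise identity at $u$, and then converting it into sharp touching-cone containment by producing a common boundary point $y\in F(K,u)\cap F(\hat K,u)$ suitable for Lemma~\ref{lem:touchincl}.
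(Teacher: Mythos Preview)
Your arguments for parts \textit{b} and \textit{c} are correct and match the paper's proof essentially verbatim. The issue is in part \textit{a}: your appeal to ``the degenerate case $\V_n(\hat K,K,\ldots)=0$ is handled via Fact~\ref{fact:mvpost} as in the proof of Lemma~\ref{lem:0extaf}'' does not go through as stated. In Lemma~\ref{lem:0extaf} the degenerate case is dispatched using that $\dim K'=n$ (built into the construction of Lemma~\ref{lem:0extcap}) \emph{and} $\dim K\ge n-1$ (forced by the hypothesis $\dim T(K,u)=1$). Only with both of these can one conclude from Fact~\ref{fact:mvpost} that the offending subset must lie entirely among $C_1,\ldots,C_{n-2}$, and hence that both mixed area measures vanish. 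In the present lemma there is no hypothesis on $\dim K$, so when $\dim K<n$ the violating subset in Fact~\ref{fact:mvpost} may well involve $K$, and you cannot immediately deduce $\SM_{\hat K,C_1,\ldots,C_{n-2}}=0$.

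The paper fills this gap by an approximation: when $\dim K<n$ it replaces $K$ by $K_t=K+t\hat B$, where $\hat B=\mathrm{conv}\{B,su\}$ with $s>1$ chosen so that $N(\hat B,su)=\mathbb{R}_+\cl B(u,\varepsilon)$. Theorem~\ref{thm:mainupper} gives $\SM_{\hat B,C_1,\ldots,C_{n-2}}(B(u,\varepsilon))=0$, so $K_t$ satisfies the hypotheses with $\dim K_t=n$, and the full-dimensional case of part~\textit{a} applies to $K_t$. Letting $t\downarrow 0$ and using continuity of mixed area measures together with $\hat K_t\to\hat K$ (Wulff-shape continuity) yields the conclusion for $K$. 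You should either add this approximation step, or supply a separate argument showing that the degenerate case with $\dim K<n$ can be handled directly.
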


\begin{proof}
We first note that as $B(u,\varepsilon)$ lies strictly inside a 
hemisphere, it follows that $\hat K\subseteq \bigcap_{v\in S^{n-1}\backslash 
B(u,\varepsilon)} H^-_{v,\|h_K\|_\infty}$ is compact and thus
$\hat K\in\mathcal{K}^n$.

We now prove part $a$. Suppose first that $\dim K=n$. Then we can proceed 
as in the proof of Lemma \ref{lem:0extaf}. Indeed, if $\V_n(\hat 
K,K,C_1,\ldots,C_{n-2})=0$, then Fact \ref{fact:mvpost} yields 
$I\subseteq[n-2]$ with $\dim C_I<|I|$, and thus $\SM_{\hat 
K,C_1,\ldots,C_{n-2}}=\SM_{K,C_1,\ldots,C_{n-2}}=0$. If $\V_n(\hat 
K,K,C_1,\ldots,C_{n-2})>0$, then the Alexandrov--Fenchel argument of Lemma 
\ref{lem:0extaf} extends \emph{verbatim} to the present setting as 
$h_{\hat K}(v)=h_K(v)$ for all $v\in S^{n-1}\backslash B(u,\varepsilon)$.
This concludes the proof of part $a$ for $\dim K=n$.

If $\dim K<n$, define $K_t=K+t\hat B$, where $\hat 
B=\mathrm{conv}\{B,su\}$ and $s>1$ is chosen so that $N(\hat 
B,su)=\mathbb{R}_+\cl B(u,\varepsilon)$. Then $\SM_{\hat 
B,C_1,\ldots,C_{n-2}}(B(u,\varepsilon))=0$ by Theorem \ref{thm:mainupper},
so we can apply part $a$ to $K_t$ to conclude that
$\SM_{\hat K_t,C_1,\ldots,C_{n-2}}=\SM_{K_t,C_1,\ldots,C_{n-2}}$.
The conclusion of part $a$ follows by the continuity
of mixed area measures \cite[p.\ 281]{Sch14}, as
$K_t\to K$ and $\hat K_t\to\hat K$ as $t\downarrow 0$ \cite[Lemma 
7.5.2]{Sch14}.

Part $b$ follows directly from the definition of $\hat K$ by
Lemma \ref{lem:wulff} and as $S^{n-1}\backslash B(u,\varepsilon)$ is
closed (as $B(u,\varepsilon)$ is an open ball by definition).

To prove part $c$, note that integrating both sides of part $a$ with
respect to $h_B$, and using that mixed volumes are symmetric in their 
arguments, yields
$$
	\int (h_{\hat K}-h_K)\, d\SM_{B,C_1,\ldots,C_{n-2}} =
	\int h_B\, d\SM_{h_{\hat K}-h_K,C_1,\ldots,C_{n-2}} = 0.
$$
Since $h_{\hat K}-h_K\ge 0$ and $u\in \supp \SM_{B,C_1,\ldots,C_{n-2}}$,
this implies that $h_{\hat K}(u)=h_K(u)$. As $K\subseteq\hat K$, 
it follows that $F(K,u)\subseteq F(\hat K,u)$. 
Now fix any $x\in F(K,u)$.
Since $x\in\bd K\cap \bd \hat K$
and $K\subseteq\hat K$, any supporting hyperplane of $\hat K$ that 
contains 
$x$ must also be a supporting hyperplane of $K$, so we must have $u\in 
N(\hat K,x)\subseteq N(K,x)$.
The conclusion of part $c$ now follows from
Lemma \ref{lem:touchincl}.
\end{proof}

We can now conclude the proof of Theorem \ref{thm:mainlower}.

\begin{proof}[Proof of Theorem \ref{thm:mainlower}]
Let $K,L\in\mathcal{K}^n$. As the upper bound is provided by 
Corollary~\ref{cor:mainupper}, it suffices to show that every $(K[n-2],L)$-extreme
direction $u\in S^{n-1}$ is contained in $\supp \SM_{K[n-2],L}$.
We will do so by induction on the dimension $n$. 
The base case $n=2$ is provided by Corollary \ref{cor:areasupp}; we
assume in the remainder of the proof that the result has been proved up to 
dimension $n-1$.

Fix a $(K[n-2],L)$-extreme direction $u\in 
S^{n-1}$, and assume for sake of contradiction that
$u\not\in \supp \SM_{K[n-2],L}$. Then there exists $\varepsilon>0$
sufficiently small so that $\SM_{K[n-2],L}(B(u,\varepsilon))=0$.
Let $\hat K$ be the modified body of Lemma \ref{lem:maxbody}. Then 
applying part $a$ of Lemma \ref{lem:maxbody} repeatedly yields
$\SM_{\hat K[n-2],L}=\SM_{K[n-2],L}$. In particular,
$$
	u\not\in \supp \SM_{\hat K[n-2],L}.
$$
We must now establish the following.

\begin{claim}
The vector $u$ is $(\hat K[n-2],L)$-extreme.
\end{claim}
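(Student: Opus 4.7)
The plan is to show $T(\hat K,u)\subseteq T(K,u)$; the desired $(\hat K[n-2],L)$-extremeness of $u$ then follows immediately, since the linearly independent lines $L_1,\dots,L_{n-2}\subseteq T(K,u)^\perp$ and $L_{n-1}\subseteq T(L,u)^\perp$ witnessing $(K[n-2],L)$-extremeness via Lemma~\ref{lem:extreme}(c) automatically satisfy $L_1,\dots,L_{n-2}\subseteq T(\hat K,u)^\perp$.

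My first step would be to rule out $\dim T(K,u)=1$. Indeed, if $\dim T(K,u)=1$ then $T(K,u)^\perp=u^\perp$, and Lemma~\ref{lem:extremeproj} yields some $v\in u^\perp$ making $u$ a $(\proj_{v^\perp}K[n-3],\proj_{v^\perp}L)$-extreme direction inside $v^\perp\simeq\mathbb{R}^{n-1}$. The inductive hypothesis of Theorem~\ref{thm:mainlower} in dimension $n-1$ gives $u\in\supp\SM_{\proj_{v^\perp}K[n-3],\proj_{v^\perp}L}$, and then the case $\dim T(K,u)=1$ of Theorem~\ref{thm:mainproj} proved in \S\ref{sec:projcase1} forces $u\in\supp\SM_{K[n-2],L}$, contradicting the standing assumption. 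Hence $\dim T(K,u)>1$.

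My second step would be to show $h_{\hat K}(u)=h_K(u)$. Pick distinct $v_1,v_2\in T(K,u)$ with $u=(v_1+v_2)/2$, and (if necessary) replace $\varepsilon$ by a smaller positive value that still satisfies $\SM_{K[n-2],L}(B(u,\varepsilon))=0$ and additionally satisfies $v_1,v_2\notin B(u,\varepsilon)$. Revisiting the proof of Lemma~\ref{lem:0extcap}($b\Rightarrow a$), whose contradiction argument relies only on the convexity of the candidate support function, one sees that no $K'\in\mathcal{K}^n$ with $K\subseteq K'$, $h_{K'}(u)>h_K(u)$, and $h_{K'}=h_K$ on $S^{n-1}\setminus B(u,\varepsilon)$ can exist. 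Since $\hat K$ satisfies the two latter conditions by its very construction, we conclude $h_{\hat K}(u)=h_K(u)$.

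My third step is the conclusion. From $h_{\hat K}(u)=h_K(u)$ and $K\subseteq\hat K$, every $x\in F(K,u)$ also belongs to $F(\hat K,u)$. For any $v\in N(\hat K,x)$, the chain $\langle v,x\rangle=h_{\hat K}(v)\ge h_K(v)\ge\langle v,x\rangle$ collapses to equalities, so $v\in N(K,x)$; thus $N(\hat K,x)\subseteq N(K,x)$, and Lemma~\ref{lem:touchincl} delivers $T(\hat K,u)\subseteq T(K,u)$.

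The main obstacle, in my view, is the reduction step ruling out $\dim T(K,u)=1$: this is the only place where the full machinery of the inductive hypothesis and Theorem~\ref{thm:mainproj}(a) enters, and it is exactly the step that makes the subsequent Wulff-type construction work uniformly. Once that is done, the tension between $\dim T(K,u)>1$ and the fact that $\hat K$ cannot strictly raise the support value at $u$ (via Lemma~\ref{lem:0extcap}) pins down $h_{\hat K}(u)=h_K(u)$, and the desired inclusion of touching cones is a short consequence.
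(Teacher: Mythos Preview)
Your proof is correct, and the overall strategy—establish $T(\hat K,u)\subseteq T(K,u)$ and then read off extremeness via Lemma~\ref{lem:extreme}(c)—matches the paper's. The route to $h_{\hat K}(u)=h_K(u)$ differs, however. The paper applies Lemma~\ref{lem:extremeproj} and the induction hypothesis exactly as you do, but then passes through Lemma~\ref{lem:mixedareaproj} to obtain $u\in\supp\SM_{B,K[n-3],L}$ and invokes part~(c) of Lemma~\ref{lem:maxbody}, whose proof uses an integration identity (symmetry of mixed volumes combined with part~(a)) to force $h_{\hat K}(u)=h_K(u)$. Your argument instead splits off the case $\dim T(K,u)=1$ (disposing of it via Theorem~\ref{thm:mainproj}(a)) and then, knowing $\dim T(K,u)>1$, recycles the elementary convexity computation from the proof of Lemma~\ref{lem:0extcap}($b\Rightarrow a$). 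Your third step is essentially the second half of the proof of Lemma~\ref{lem:maxbody}(c).

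One presentational point: your phrase ``replace $\varepsilon$ by a smaller positive value'' is doing real work, since $\hat K$ depends on $\varepsilon$; if $T(K,u)\cap S^{n-1}$ is entirely contained in $B(u,\varepsilon)$ for the $\varepsilon$ already fixed in the ambient proof, you cannot find the required $v_1,v_2$. So your argument really asks that the choice of $\varepsilon$ (and hence of $\hat K$) in the surrounding proof be made \emph{after} your step~1. This is harmless—the surrounding proof only needs the Claim for some sufficiently small $\varepsilon$—but it means you are proving the Claim for a possibly different $\hat K$ than the one literally fixed before the Claim is stated. The paper's route avoids this reordering: Lemma~\ref{lem:maxbody}(c) works uniformly for whatever $\varepsilon$ was chosen.
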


\begin{proof}
As $u$ is $(K[n-2],L)$-extreme, Lemma \ref{lem:extremeproj} yields
$v\in T(K,u)^\perp$ so that
$u$ is $(\proj_{v^\perp}K[n-3],\proj_{v^\perp}L)$-extreme.
Thus $u\in \supp \SM_{\proj_{v^\perp}K[n-3],\proj_{v^\perp}L}$
by the induction hypothesis, which implies
$u\in\supp \SM_{B,K[n-3],L}$
by Lemma \ref{lem:mixedareaproj}. Part $c$ of Lemma~\ref{lem:maxbody}
therefore yields $T(\hat K,u)\subseteq T(K,u)$, which readily implies
the claim.
\end{proof}

We now proceed as was explained in \S\ref{sec:outlinelower}.
We have
$$
	u\not\in \supp \SM_{\hat K[n-1]}
$$
by
Corollary \ref{cor:areasupp} and part $b$ of Lemma \ref{lem:maxbody}.
Therefore
$$
	u\not\in \supp \big(\SM_{\hat K[n-2],L} + \SM_{\hat K[n-1]}\big) =
	\supp \SM_{\hat K[n-2],\hat K+L}.
$$
On the other hand, as $u$ is $(\hat K[n-2],L)$-extreme, $u$ is also
$(\hat K[n-2],\hat K+L)$-extreme and $\dim T(\hat K+L,u)=1$.
By Lemma \ref{lem:extremeproj}, there exists $v\in u^\perp$ so that
$u$ is $(\proj_{v^\perp}\hat K[n-2])$-extreme, and thus
$u\in\supp \SM_{\proj_{v^\perp}\hat K[n-2]}$ by Corollary 
\ref{cor:areasupp}. The case $\dim T(K,u)=1$ of Theorem \ref{thm:mainproj}
now yields $u\in \supp \SM_{\hat K[n-2],\hat K+L}$.
This entails a contradiction, concluding the proof.
\end{proof}

\section{Mixed Hessian measures}
\label{sec:mixedhess}

The aim of this section is to derive the implications of our main results 
for mixed Hessian measures. In particular, we will prove 
Corollaries \ref{cor:mixedhess} and \ref{cor:mixedhnnonsm}.

It should be emphasized that mixed area and Hessian measures are not 
really distinct notions: mixed areas measures of convex bodies in 
$\mathbb{R}^n$ are a special case of mixed Hessian measures of convex 
functions on $\mathbb{R}^n$ \cite[Corollary 4.2]{HMU24}, while mixed 
Hessian measures of convex functions on $\mathbb{R}^n$ are a special case 
of mixed area measures of convex bodies in $\mathbb{R}^{n+1}$ 
\cite[Corollary 4.9]{HMU24}. Such connections were used in 
\cite{CH05,HMU24} to translate some previously known special cases of 
Conjecture~\ref{conj:schneider} to the setting of mixed Hessian measures. 
Some care is needed, however, in extending these connections to the 
general setting considered here.

\subsection{From mixed area to mixed Hessian measures}

The aim of this section is to recall how mixed Hessian measures
can be obtained as a special case of mixed area measures. The construction 
in this section is taken from \cite{HMU24}.

Denote by $\mathrm{Conv}_{\rm cd}(\mathbb{R}^n)$ the class of 
lower-semicontinuous proper convex functions $g:\mathbb{R}^n\to 
(-\infty,\infty]$ such that $\mathop{\mathrm{dom}} g$ is compact, and let
$$
	\mathrm{Conv}_{\rm cd}^*(\mathbb{R}^n) =
	\{ g^* : g\in \mathrm{Conv}_{\rm cd}(\mathbb{R}^n)\}.
$$
Here $g^*$ denotes the convex conjugate
$$
	g^*(x) = \sup_{y\in\mathbb{R}^n}\{\langle x,y\rangle -
	g(y)\}.
$$
Let $f\in \mathrm{Conv}_{\rm cd}^*(\mathbb{R}^n)$.
Then $f^*\in\mathrm{Conv}_{\rm cd}(\mathbb{R}^n)$, as
any lower-semicontinuous convex function $g$
satisfies $g^{**}=g$ \cite[Theorem 1.6.13]{Sch14}.
As $f^*$ is lower-semicontinuous,
$$
	\mathrm{epi}(f^*) =
	\{(x,t)\in\mathbb{R}^{n+1}:
	f^*(x)\le t\}
$$
is a closed convex set, where we write $(x,t)\in\mathbb{R}^{n+1}$ with
$x\in\mathbb{R}^n$ and $t\in\mathbb{R}$.

\begin{defn}
For every $f\in \mathrm{Conv}_{\rm cd}^*(\mathbb{R}^n)$, define
$K_f\in\mathcal{K}^{n+1}$ as
$$
	K_f = \mathrm{epi}(f^*) \cap \{(x,t)\in\mathbb{R}^{n+1}:
	t\le r_f\}
$$
where $r_f=\max_{x\in\mathop{\mathrm{dom}}f^*} f^*(x)<\infty$.
\end{defn}

In the following, we denote by $S^n_-$ the negative hemisphere in $S^n$:
$$
	S^n_- = \{(y,t)\in S^n: t<0\}.
$$
Then the map $\phi:\mathbb{R}^n\to S^n_-$ defined by
$$
	\phi(x) = \bigg(
	\frac{x}{\sqrt{1+\|x\|^2}},
	-\frac{1}{\sqrt{1+\|x\|^2}}
	\bigg)
$$
is a homeomorphism with inverse $\phi^{-1}(y,t)=-\frac{y}{t}$. The 
following result, which is a restatement of \cite[Corollary 4.9]{HMU24},
relates the pushforward
$\phi_*\HH_{f_1,\ldots,f_n}$ of the mixed Hessian measure to the
mixed area measure $\SM_{K_{f_1},\ldots,K_{f_n}}$ restricted
to $S^n_-$.

\begin{lem}
\label{lem:hmu}
Let $f_1,\ldots,f_n\in \mathrm{Conv}_{\rm cd}^*(\mathbb{R}^n)$.
Then the measures $\phi_*\HH_{f_1,\ldots,f_n}$
and $\SM_{K_{f_1},\ldots,K_{f_n}}|_{S^n_-}$ are mutually absolutely 
continuous with
$$
	\frac{d\phi_*\HH_{f_1,\ldots,f_n}}{d\SM_{K_{f_1},\ldots,K_{f_n}}}
	(\phi(x)) = \frac{1}{\sqrt{1+\|x\|^2}}.
$$
In particular,
$\supp \HH_{f_1,\ldots,f_n} = \phi^{-1}\big(S^n_- \cap
\supp \SM_{K_{f_1},\ldots,K_{f_n}}\big)$.
\end{lem}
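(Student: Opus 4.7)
The plan is to establish the identity by first computing the support function of $K_f$ on the negative hemisphere and then carrying out an explicit change of variables, verifying the resulting density in the smooth case and extending to the general case by continuity. The key ingredient is the following computation: for $(y,s)\in S^n$ with $s<0$, the definition of $K_f$ and the identity $f^{**}=f$ yield
\[
h_{K_f}(y,s) \;=\; \sup_{x}\bigl(\langle y,x\rangle + s f^*(x)\bigr) \;=\; -s\,f(-y/s),
\]
since for $s<0$ the supremum over $t\in[f^*(x),r_f]$ is attained at $t=f^*(x)$. In particular, $h_{K_f}(\phi(x))=f(x)/\sqrt{1+\|x\|^2}$, which is the bridge between the two objects.

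Next I would reduce to the case where $f_1,\ldots,f_n$ are $C^2$ and strictly convex on the interior of their common effective domain. There both measures admit explicit densities: by definition $d\HH_{f_1,\ldots,f_n}/dx=\D_n(\nabla^2 f_1,\ldots,\nabla^2 f_n)$, while the smooth formula for mixed area measures recalled in Remark~\ref{rem:smoothmystery} gives $d\SM_{K_{f_1},\ldots,K_{f_n}}/d\omega=\D_n(D^2 h_{K_{f_1}},\ldots,D^2 h_{K_{f_n}})$ on $S^n_-$. A direct computation of the Jacobian of $\phi$ yields $J_\phi(x)=(1+\|x\|^2)^{-(n+1)/2}$, so by the area formula the pushforward density $d(\phi_*\HH_{f_1,\ldots,f_n})/d\omega$ at $\phi(x)$ equals $\D_n(\nabla^2 f_i(x))(1+\|x\|^2)^{(n+1)/2}$. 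Matching this with $\D_n(D^2 h_{K_{f_i}}(\phi(x)))/\sqrt{1+\|x\|^2}$ reduces the claim to an algebraic identity comparing the two mixed discriminants up to an explicit power of $1+\|x\|^2$. This can be extracted from the $1$-homogeneous extension $h_{K_f}(y,s)=-s\,f(-y/s)$, in particular $h_{K_f}(x,-1)=f(x)$, together with the standard identification of the spherical restricted Hessian of a support function with a Euclidean Hessian of its planar section.

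The general case then follows by a mollification argument, using the continuity of mixed area measures \cite[p.~281]{Sch14} and of mixed Hessian measures \cite[Theorem~2.4]{TW02}. The closing assertion on supports is immediate from the mutual absolute continuity of the two measures.

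The main obstacle is the linear-algebra step above: carefully matching the mixed discriminants of the restricted spherical Hessians of the $h_{K_{f_i}}$ at $\phi(x)$ with the mixed discriminants of the Euclidean Hessians $\nabla^2 f_i(x)$, accounting for the correct power of $(1+\|x\|^2)$. This is exactly the change-of-variables content of \cite[Corollary~4.9]{HMU24}, which I would either invoke directly as a black box or reprove by hand using the explicit parameterization of the lower boundary of $\mathrm{epi}(f^*)$ via the Legendre transform.
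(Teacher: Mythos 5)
The paper does not actually prove this lemma; it is explicitly stated to be ``a restatement of \cite[Corollary 4.9]{HMU24},'' so the paper's approach is simply to cite that result. Your proposal ultimately lands in the same place (``invoke directly as a black box''), so in that sense the approaches coincide. Your sketch of an independent derivation is sound in its ingredients: the computation $h_{K_f}(y,s)=-s\,f(-y/s)$ for $s<0$ is correct (it generalizes Lemma~\ref{lem:kfsupp}, and the case $t=f^*(z)$ giving the supremum is exactly why one needs $s<0$), the Jacobian of the gnomonic-type map $\phi$ is $(1+\|x\|^2)^{-(n+1)/2}$, and the mollification argument for the non-smooth case is the natural one given the continuity of both measures. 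The genuinely nontrivial step you identify---matching the mixed discriminant of the spherical Hessians $D^2 h_{K_{f_i}}(\phi(x))$ with the mixed discriminant of the Euclidean Hessians $\nabla^2 f_i(x)$ up to the correct power of $1+\|x\|^2$---is indeed the heart of the HMU24 computation and would require care to reprove (one must track how the restriction to $\phi(x)^\perp\subset\mathbb{R}^{n+1}$ interacts with the linear slicing at $s=-1$). Since you flag this honestly and offer the citation as the fallback, the proposal is acceptable; note only that a careful mollification should also verify that the approximating functions remain in $\mathrm{Conv}_{\rm cd}^*(\mathbb{R}^n)$, i.e., are conjugates of functions with compact domain, which amounts to uniformly bounding the gradients.
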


\subsection{Extreme directions and localization}

In order to translate our main results to the setting of mixed Hessian
measures, we must clarify the connection between 
$L(f,x)$ and $T(K_f,u)$. We begin with the following simple lemma.

\begin{lem}
\label{lem:kfsupp}
For any $f\in\mathrm{Conv}_{\rm cd}^*(\mathbb{R}^n)$ and
$x\in\mathbb{R}^n$, we have
$f(x) = h_{K_f}(x,-1)$.
\end{lem}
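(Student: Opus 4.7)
The plan is to unfold the definitions directly and use the biconjugation identity $f^{**}=f$. Starting from the definition of the support function,
$$
h_{K_f}(x,-1) = \sup_{(y,t)\in K_f}\big(\langle x,y\rangle - t\big),
$$
I would first describe $K_f$ explicitly: a pair $(y,t)\in\mathbb{R}^{n+1}$ lies in $K_f$ if and only if $y\in\mathop{\mathrm{dom}} f^*$ and $f^*(y)\le t\le r_f$. In particular, for every $y\in\mathop{\mathrm{dom}} f^*$ the pair $(y,f^*(y))$ is in $K_f$ since $f^*(y)\le r_f$ by definition of $r_f$.

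Next, for fixed $y\in\mathop{\mathrm{dom}} f^*$ the map $t\mapsto -t$ is maximized on $[f^*(y),r_f]$ at $t=f^*(y)$, yielding value $-f^*(y)$. Taking the supremum over $y$ therefore gives
$$
h_{K_f}(x,-1) = \sup_{y\in\mathop{\mathrm{dom}} f^*}\big(\langle x,y\rangle - f^*(y)\big) = f^{**}(x).
$$
Finally, since $f\in\mathrm{Conv}_{\rm cd}^*(\mathbb{R}^n)$ by assumption, we may write $f=g^*$ for some $g\in\mathrm{Conv}_{\rm cd}(\mathbb{R}^n)$. Because $g$ is lower-semicontinuous, proper, and convex, the standard biconjugation identity gives $f^*=g^{**}=g$, and hence $f^{**}=g^*=f$. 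Combining this with the previous display concludes that $h_{K_f}(x,-1)=f(x)$.

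There is no real obstacle here; the only minor point to verify carefully is that the truncation by the hyperplane $\{t\le r_f\}$ in the definition of $K_f$ does not alter the supremum. This is handled by the observation that the optimizing $t$ is $f^*(y)\le r_f$ for every $y\in\mathop{\mathrm{dom}} f^*$, so the constraint $t\le r_f$ is inactive and the truncation of $\mathrm{epi}(f^*)$ is harmless for the computation of $h_{K_f}(x,-1)$.
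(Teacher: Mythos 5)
Your proof is correct and follows essentially the same route as the paper's: unfold the support function, optimize in $t$ to reduce to $\sup_y\{\langle x,y\rangle-f^*(y)\}=f^{**}(x)$, and invoke biconjugation (the paper records $f^{**}=f$ just before defining $K_f$, whereas you derive it inline from $f=g^*$). Your explicit remark that the truncation at $t\le r_f$ is inactive is a helpful clarification of a point the paper leaves implicit.
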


\begin{proof}
We readily compute
$$
	h_{K_f}(x,-1) =
	\sup_{(y,t): f^*(y)\le t\le r_f}
	\{\langle y,x\rangle - t\}
	=
	\sup_y \{\langle y,x\rangle - f^*(y)\} = f(x)
$$
by the definition of $K_f$.
\end{proof}

Consequently, we have the following.

\begin{lem}
\label{lem:lftkf}
For any $f\in\mathrm{Conv}_{\rm cd}^*(\mathbb{R}^n)$ and
$x\in\mathbb{R}^n$, we have
$$
	L(f,x) = \phi^{-1}\big(
	S^n_- \cap T(K_f,\phi(x))
	\big).
$$
\end{lem}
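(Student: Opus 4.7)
The plan is to prove $L(f,x) = A$, where $A := \{y \in \mathbb{R}^n : (y,-1) \in T(K_f,\phi(x))\}$. Note that $A = \phi^{-1}(S^n_- \cap T(K_f,\phi(x)))$: since $T(K_f,\phi(x))$ is a cone and $(y,-1)$ is a positive scalar multiple of $\phi(y)$, a point of $S^n_- \cap T(K_f,\phi(x))$ is of the form $\phi(z)$ with $z=-y/t$, and $\phi(z) \in T(K_f,\phi(x))$ iff $(z,-1)\in T(K_f,\phi(x))$. The bridge between the two sides of the desired equality is Lemma~\ref{lem:kfsupp}: $f(y) = h_{K_f}(y,-1)$. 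I would first record two structural facts about $A$: it is convex as the slice $T(K_f,\phi(x))\cap\{t=-1\}$ of a convex cone; and $x\in\relint A$, since $(x,-1)=\sqrt{1+\|x\|^2}\,\phi(x)\in\relint T(K_f,\phi(x))$ by cone-invariance of the relative interior, together with the identity $\relint(C\cap H) = \relint C \cap H$ for any affine subspace $H$ meeting $\relint C$ (cf.\ \cite[Thm.~6.5]{Roc70}).

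For $A \subseteq L(f,x)$: by Remark~\ref{rem:gentouch}, $T(K_f,\phi(x))$ is a face of $N(K_f,F(K_f,\phi(x)))$. Choosing any $p_0=(a_0,c_0)\in\relint F(K_f,\phi(x))$, the support function satisfies $h_{K_f}(u)=\langle u,p_0\rangle$ throughout this normal cone, so it is linear on $T(K_f,\phi(x))$. For every $y\in A$, Lemma~\ref{lem:kfsupp} then gives $f(y)=\langle a_0,y\rangle-c_0$; hence $f$ is affine on $A$, and the defining maximality of $L(f,x)$ yields $A\subseteq L(f,x)$.

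For the reverse inclusion, set $L=L(f,x)$, pick any $a\in\partial f(x)$ (nonempty since $f$ is finite and continuous on $\mathbb{R}^n$), and let $c=f^*(a)=\langle a,x\rangle-f(x)$. A short subdifferential argument forces $f(y)=\langle a,y\rangle-c$ for all $y\in L$: the subgradient inequality gives $f(y)\ge\langle a,y\rangle-c$ everywhere, and combined with the affineness of $f|_L$ and $x\in\relint L$ it forces the slope of $f|_L$ to agree with $a$ on the span of $L-L$. Moreover $(a,c)\in K_f$, since $f^*(a)=c\le r_f$. Lemma~\ref{lem:kfsupp} then reads $h_{K_f}(y,-1)=\langle(y,-1),(a,c)\rangle$ for $y\in L$, so $(y,-1)\in N(K_f,(a,c))$; consequently the convex cone $C(L):=\mathbb{R}_+\{(y,-1):y\in L\}$ lies in $N(K_f,(a,c))$, with $(x,-1)\in\relint C(L)$. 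Now invoke the standard face principle: if $T$ is a face of a convex set $K$ and $C\subseteq K$ is convex with $T\cap\relint C\ne\varnothing$, then $C\subseteq T$ (any $q\in C$ can be written as $q=2p-q'$ with $q'\in C$ and $p\in T\cap\relint C$, and the face property applied to $p=\tfrac12(q+q')$ forces $q\in T$). By Remark~\ref{rem:gentouch}, $T(K_f,\phi(x))$ is a face of $N(K_f,(a,c))$ containing $(x,-1)$, and $(x,-1)\in\relint C(L)$, so the principle gives $C(L)\subseteq T(K_f,\phi(x))$ and hence $L\subseteq A$. The step I expect to require the most care is this final face-versus-cone comparison, together with verifying that $(a,c)$ genuinely lies in $K_f$ (rather than merely in $\mathrm{epi}(f^*)$), which is what legitimizes using $N(K_f,(a,c))$ as the ambient normal cone in the face principle.
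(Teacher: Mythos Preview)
Your proof is correct and rests on the same bridge as the paper's, namely Lemma~\ref{lem:kfsupp}: affine behaviour of $f$ on a set $B\subseteq\mathbb{R}^n$ is equivalent to linearity of $h_{K_f}$ on the cone $\mathbb{R}_+\phi(B)$. The paper packages this as a single bijection between convex sets in $\mathbb{R}^n$ and convex cones in $\mathbb{R}_+S^n_-$, and then reads off the result from the fact that both $L(f,x)$ and $T(K_f,\phi(x))\cap\mathbb{R}_+S^n_-$ are the \emph{maximal} objects (with $x$, resp.\ $\phi(x)$, in the relative interior) in their respective categories. You instead prove the two inclusions separately: the inclusion $A\subseteq L(f,x)$ is exactly one direction of the paper's equivalence, while for $L(f,x)\subseteq A$ you make the maximality argument concrete by producing a specific point $(a,c)\in K_f$ via the subdifferential, showing $C(L)\subseteq N(K_f,(a,c))$, and then invoking the face characterization of $T(K_f,\phi(x))$ from Remark~\ref{rem:gentouch}. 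This is a more hands-on route to the same conclusion; it has the advantage of being entirely self-contained, at the cost of being slightly longer than the paper's symmetry argument.

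One minor imprecision: in your face principle, writing $q=2p-q'$ with $q'\in C$ need not be possible for a fixed $p\in\relint C$; the correct version is that for any $q\in C$ and $p\in\relint C$ there exists $\lambda\in(0,1)$ with $p=\lambda q+(1-\lambda)q'$ for some $q'\in C$, after which the face property forces $q\in T$. This does not affect the validity of the argument.
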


\begin{proof}
Any convex set $B\subseteq\mathbb{R}^n$ defines a convex cone
$A=\mathbb{R}_+\phi(B)$ in $\mathbb{R}_+S^n_-$.
We claim that $f$ is affine on $B$ if and only if 
$h_{K_f}$ is linear on $A$: indeed,
\begin{align*}
	&f\bigg(\frac{t}{t+t'}x+\frac{t'}{t+t'}x'\bigg) =
	\frac{t}{t+t'} f(x) +
	\frac{t'}{t+t'} f(x') \quad
	\Longleftrightarrow
\\
	&h_{K_f}(tx+t'x',-t-t') =
	h_{K_f}(tx,-t) +
	h_{K_f}(t'x',-t') 
\end{align*}
for any $x,x'\in B$ and
$t,t'>0$ by Lemma \ref{lem:kfsupp}. 

Now note that for any convex cone $A$ in $\mathbb{R}_+S^n_-$, the convex 
set $B\subseteq\mathbb{R}^n$ defined by $B=\phi^{-1}(A\cap S^n_-)$ 
satisfies $A=\mathbb{R}_+\phi(B)$. Thus the relation between $A$ and $B$ 
defines a bijection between convex sets in $\mathbb{R}^n$ and convex cones 
in $\mathbb{R}_+S^n_-$. The definition of $L(f,x)$ therefore implies that
$\mathbb{R}_+\phi(L(f,x))$ is the largest convex cone in
$\mathbb{R}_+S^n_-$ that contains $\phi(x)$ in its relative interior on 
which $h_{K_f}$ is linear, that is,
$$
	\mathbb{R}_+\phi(L(f,x)) = T(K_f,\phi(x)) \cap \mathbb{R}_+S^n_-.
$$
The conclusion follows by inverting this identity.
\end{proof}

Lemma \ref{lem:lftkf} immediately yields the following corollary.

\begin{cor}
\label{cor:hessext}
Let $f_1,\ldots,f_n\in\mathrm{Conv}_{\rm cd}^*(\mathbb{R}^n)$.
Then $x\in\mathbb{R}^n$ is $(f_1,\ldots,f_n)$-extreme if and only if
$\phi(x)\in S^n$ is $(K_{f_1},\ldots,K_{f_n})$-extreme.
\end{cor}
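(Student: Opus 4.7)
The plan is to reduce Corollary~\ref{cor:hessext} to Lemma~\ref{lem:lftkf} by carefully tracking how the relation $L(f,x) \leftrightarrow T(K_f,\phi(x))$ behaves under Minkowski sums of bodies versus ordinary sums of functions. Recall from the proof of Lemma~\ref{lem:fcnextreme} that $L(f_I,x) = \bigcap_{i\in I} L(f_i,x)$, and from Lemma~\ref{lem:touchingsum} that $T\bigl(\sum_{i\in I} K_{f_i},\phi(x)\bigr) = \bigcap_{i\in I} T(K_{f_i},\phi(x))$. Combining these with Lemma~\ref{lem:lftkf} applied to each $f_i$ yields
$$
    L(f_I,x) = \phi^{-1}\bigl(S^n_- \cap T\bigl({\textstyle\sum_{i\in I}}K_{f_i},\phi(x)\bigr)\bigr)
$$
for every $I\subseteq[n]$. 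This is the key identity that ports the whole problem from touching cones of the bodies $K_{f_i}$ to the affine loci $L(f_i,x)$.

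Next I would convert this identity into an equality of dimensions. The natural dimension matching is
$$
    \dim T\bigl({\textstyle\sum_{i\in I}}K_{f_i},\phi(x)\bigr) = \dim \bar L(f_I,x) + 1,
$$
interpreting the dimension of a convex cone as the dimension of its linear span. To see this, note that $\phi$ is a diffeomorphism from $\mathbb{R}^n$ onto the open hemisphere $S^n_-$, so $\phi(L(f_I,x))$ is a convex subset of $S^n_-$ of the same dimension as $L(f_I,x)$; moreover, $\phi(x)$ lies in its relative interior. Since $\phi(x)$ is a unit vector orthogonal to the tangent space of $S^n$ at $\phi(x)$, and the tangent directions of $\phi(L(f_I,x))$ at $\phi(x)$ span $\bar L(f_I,x)$ lifted into this tangent space, taking the positive linear hull adds exactly one new dimension (the radial one). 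Combined with the display, this gives the dimension formula.

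With the dimension formula in hand, the corollary is immediate: taking orthogonal complements in $\mathbb{R}^{n+1}$ versus $\mathbb{R}^n$ one obtains
$$
    \dim T\bigl({\textstyle\sum_{i\in I}}K_{f_i},\phi(x)\bigr)^\perp = (n+1) - \bigl(\dim \bar L(f_I,x) + 1\bigr) = \dim \bar L(f_I,x)^\perp,
$$
so the condition $\dim T(K_{f_I}^{\mathrm{mixed}},\phi(x))^\perp \ge |I|$ of Definition~\ref{defn:extreme} (applied in $\mathbb{R}^{n+1}$, with $K_{f_I}^{\mathrm{mixed}} = \sum_{i\in I} K_{f_i}$) coincides with the condition $\dim \bar L(f_I,x)^\perp \ge |I|$ of Definition~\ref{defn:fcnextreme}, quantifier by quantifier over $I\subseteq[n]$.

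The main obstacle is the dimension-shift step: one has to verify rigorously that coning off a convex set sitting in the hemisphere through the origin of $\mathbb{R}^{n+1}$ increases the dimension by exactly one, and that $\mathbb{R}_+\phi(L(f_I,x))$ really is (the relative interior of) $T(\sum_{i\in I} K_{f_i},\phi(x))$ rather than some proper subset. The second point follows because the argument already carried out in the proof of Lemma~\ref{lem:lftkf} applies verbatim once we have identified the touching cone of the Minkowski sum with the intersection via Lemma~\ref{lem:touchingsum}; the first point is an elementary linear algebra computation. Everything else is bookkeeping.
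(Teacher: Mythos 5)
Your proof is correct and fleshes out precisely what the paper means by ``Lemma \ref{lem:lftkf} immediately yields the following corollary'': combine Lemma \ref{lem:lftkf} with the intersection identities $L(f_I,x)=\bigcap_{i\in I}L(f_i,x)$ and $T\bigl(\sum_{i\in I}K_{f_i},\phi(x)\bigr)=\bigcap_{i\in I}T(K_{f_i},\phi(x))$ to obtain $L(f_I,x)=\phi^{-1}\bigl(S^n_-\cap T(\sum_{i\in I}K_{f_i},\phi(x))\bigr)$ for every $I$, and then the one-dimension shift from coning over the hemisphere matches the two extremality conditions index by index. The only slight imprecision is that $\mathbb{R}_+\phi(L(f_I,x))$ equals $T(\sum_{i\in I}K_{f_i},\phi(x))\cap\mathbb{R}_+S^n_-$ rather than the relative interior of the touching cone; but since this set contains a relatively open neighborhood of $\phi(x)\in\relint T(\sum_{i\in I}K_{f_i},\phi(x))$, it still has the same dimension as the touching cone, which is all your dimension count actually uses.
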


We are now nearly ready to prove our main results on mixed Hessian 
measures. There is, however, one additional technical point that must be 
dispensed with first. The reduction from mixed Hessian measures to mixed 
area measures requires us to work with functions in $\mathrm{Conv}_{\rm 
cd}^*(\mathbb{R}^n)$, while our results are formulated in the more general 
setting of functions in $\mathrm{Conv}(\Omega)$ for some open convex set 
$\Omega\subseteq\mathbb{R}^n$. The following localization lemma reduces 
the the latter setting to the former one.

\begin{lem}
\label{lem:loc}
Let $\Omega\subseteq\mathbb{R}^n$ be an open convex set
and $\Omega'\subset\Omega$ be convex and compact.
Then for any $f\in \mathrm{Conv}(\Omega)$, there exists $g\in 
\mathrm{Conv}_{\rm cd}^*(\mathbb{R}^n)$ so that $f=g$ on $\Omega'$.
\end{lem}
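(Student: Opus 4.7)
The plan is to extend $f|_{\Omega'}$ to a finite-valued Lipschitz convex function $g$ on all of $\mathbb{R}^n$ via inf-convolution with a multiple of the Euclidean norm, and then read off from the Lipschitz property that $g^* \in \mathrm{Conv}_{\rm cd}(\mathbb{R}^n)$.

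First I would use compactness of $\Omega'$ together with openness of $\Omega$ to find $\delta>0$ with $\Omega'+\delta\bar{B}\subset\Omega$. Since $f\in\mathrm{Conv}(\Omega)$ is continuous on $\Omega$, it is bounded on the compact set $\Omega'+\delta\bar{B}$; a standard convexity estimate (bounding difference quotients by the oscillation over $\Omega'+\delta\bar{B}$ divided by $\delta$) then yields a constant $L$ such that $f$ is $L$-Lipschitz on $\Omega'$.

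Next I would define
$$
        g(x)=\inf_{y\in\Omega'}\bigl(f(y)+L\|x-y\|\bigr).
$$
This is the inf-convolution of $f+\iota_{\Omega'}$ with $L\|\cdot\|$, hence $g$ is convex. Since $\Omega'$ is compact and the integrand is continuous in $y$, $g$ is real-valued on $\mathbb{R}^n$, and the triangle inequality immediately shows $g$ is $L$-Lipschitz. For $x\in\Omega'$, taking $y=x$ gives $g(x)\le f(x)$, while the $L$-Lipschitz property of $f$ on $\Omega'$ gives the reverse inequality, so $g=f$ on $\Omega'$.

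Finally I would verify $g\in\mathrm{Conv}_{\rm cd}^*(\mathbb{R}^n)$. Since $g$ is finite and $L$-Lipschitz on $\mathbb{R}^n$, its recession function satisfies $g_\infty(v)\le L\|v\|$ for all $v$. The standard identity
$$
        \mathop{\mathrm{dom}} g^*=\bigl\{y\in\mathbb{R}^n:\langle v,y\rangle\le g_\infty(v)\text{ for all }v\in\mathbb{R}^n\bigr\}
$$
then realizes $\mathop{\mathrm{dom}} g^*$ as an intersection of closed halfspaces contained in the closed Euclidean ball of radius $L$; hence $\mathop{\mathrm{dom}} g^*$ is compact. The conjugate $g^*$ is always l.s.c.\ and convex, and it is proper because $g$ is, so $g^*\in\mathrm{Conv}_{\rm cd}(\mathbb{R}^n)$. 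Since $g$ is itself l.s.c.\ proper convex, the Fenchel--Moreau theorem gives $g=(g^*)^*$, so $g\in\mathrm{Conv}_{\rm cd}^*(\mathbb{R}^n)$.

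The routine pieces are the convexity, continuity, and Lipschitz properties of the inf-convolution; the one step that requires some care is showing that $\mathop{\mathrm{dom}} g^*$ is not merely bounded but also \emph{closed}, and I expect that to be the main obstacle in a fully written-out proof. The recession-function description of the domain of a conjugate, valid for any finite-valued convex function, is the clean tool that settles this point.
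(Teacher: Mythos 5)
Your $g$, the inf-convolution $(f+\iota_{\Omega'})\,\square\, L\|\cdot\|$, is in fact the \emph{same} function as the paper's
$g=\big((f+\mathrm{I}_{\Omega'})^*+\mathrm{I}_{\ell B}\big)^*$: by the conjugate-of-infimal-convolution identity
$(h_1\square h_2)^*=h_1^*+h_2^*$ and $(L\|\cdot\|)^*=\iota_{LB}$, so $g^*=(f+\iota_{\Omega'})^*+\iota_{LB}$ and hence $g=g^{**}$ agrees with the paper's construction. Your verification that $g=f$ on $\Omega'$ via the primal inf-convolution formula is correct and arguably more transparent than the paper's subgradient argument.

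There is, however, a genuine gap in the step that claims $\mathrm{dom}\, g^*$ is closed. The ``standard identity''
$\mathrm{dom}\, g^*=\{y:\langle v,y\rangle\le g_\infty(v)\ \text{for all }v\}$
is not correct; what Rockafellar's Theorem~13.3 actually gives is that the right-hand side equals $\cl(\mathrm{dom}\, g^*)$, so this argument only yields boundedness of $\mathrm{dom}\, g^*$, not closedness. And closedness really can fail for a general finite $L$-Lipschitz convex function: take a convex $g$ on $\mathbb{R}$ with $g(x)=|x|-\sqrt{|x|}$ for $|x|\ge 1$ (suitably interpolated near the origin). This $g$ is $1$-Lipschitz, yet $g^*(1)=\sup_x\big(x-g(x)\big)=+\infty$ while $g^*(y)<\infty$ for $|y|<1$, so $\mathrm{dom}\, g^*=(-1,1)$ is not closed. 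So your ``clean tool'' is the one step that does not in fact settle the point. The correct way to close the gap, within your own framework, is to compute $g^*$ explicitly: $g^*=(f+\iota_{\Omega'})^*+\iota_{LB}$, and since $\mathrm{dom}(f+\iota_{\Omega'})=\Omega'$ is compact, $(f+\iota_{\Omega'})^*$ is finite everywhere and $\mathrm{dom}\, g^*=LB$ exactly, which is manifestly compact. This is essentially what the paper does: by defining $g$ from the dual side as the conjugate of a function visibly in $\mathrm{Conv}_{\rm cd}(\mathbb{R}^n)$, the compactness of $\mathrm{dom}\, g^*$ requires no separate argument at all.
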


\begin{proof}
Recall that the indicator $\mathrm{I}_A$ of a convex set $A$ is defined 
by
$$
	\mathrm{I}_A(x) = \begin{cases}
	0 & \text{for }x\in A,\\
	+\infty & \text{for }x\not\in A.
	\end{cases}
$$
By \cite[Theorem 1.5.3]{Sch14}, $f$ is Lipschitz on $\Omega'$ with 
Lipschitz constant $\ell$. Define
$$
	g = ((f + \mathrm{I}_{\Omega'})^*+\mathrm{I}_{\ell B})^*.
$$
As $\mathrm{dom}(f + \mathrm{I}_{\Omega'})=\Omega'$ is compact,
$(f + \mathrm{I}_{\Omega'})^*$ is a continuous convex function on
$\mathbb{R}^n$. Thus $(f + \mathrm{I}_{\Omega'})^*+\mathrm{I}_{\ell B}
\in \mathrm{Conv}_{\rm cd}(\mathbb{R}^n)$ and therefore
$g\in \mathrm{Conv}_{\rm cd}^*(\mathbb{R}^n)$.

Now note that $f + \mathrm{I}_{\Omega'}$ is lower-semicontinuous, and
thus
$$
	f(x) =
	\sup_{y\in\mathbb{R}^n}
	\big\{ \langle y,x\rangle - (f + \mathrm{I}_{\Omega'})^*(y)\big\}
$$
for $x\in\Omega'$.
By \cite[Theorem 23.5]{Roc70}, 
the supremum is attained by $y\in \partial f(x)\subseteq
\partial(f + \mathrm{I}_{\Omega'})(x)$, where 
$\partial f$ is the subdifferential.
As $\|y\|\le\ell$ by \cite[Theorem 24.7]{Roc70}, 
$$
	f(x) =
	\sup_{\|y\|\le\ell}
	\big\{ \langle y,x\rangle - (f + \mathrm{I}_{\Omega'})^*(y)\big\}	
	= g(x)
$$
for every $x\in\Omega'$, concluding the proof.
\end{proof}

Fix $f_1,\ldots,f_n\in\mathrm{Conv}(\Omega)$ and $x\in\Omega$,
and let $\Omega'=\cl B(x,\varepsilon)$ for some $\varepsilon>0$.
Construct the functions $g_1,\ldots,g_n\in  \mathrm{Conv}_{\rm 
cd}^*(\mathbb{R}^n)$ as in Lemma \ref{lem:loc}. Then clearly
$$
	\HH_{f_1,\ldots,f_n}|_{B(x,\varepsilon)} =
	\HH_{g_1,\ldots,g_n}|_{B(x,\varepsilon)},
	\qquad
	T(f_i,x)\cap B(x,\varepsilon) = T(g_i,x)\cap B(x,\varepsilon).
$$
In particular, the properties that $u\in \supp \HH_{f_1,\ldots,f_n}$ or that
$u$ is $(f_1,\ldots,f_n)$-extreme are unchanged if we replace
$f_1,\ldots,f_n$ by $g_1,\ldots,g_n$.

\begin{cor}
\label{cor:equivconj}
Conjecture \ref{conj:schneider} implies Conjecture 
\ref{conj:fcnschneider}.
\end{cor}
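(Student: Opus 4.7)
The plan is to reduce Conjecture \ref{conj:fcnschneider} to Conjecture \ref{conj:schneider} by a pointwise argument that combines the localization of Lemma \ref{lem:loc} with the lifting correspondence $f\mapsto K_f$ developed above. Fix $x\in\Omega$; it suffices to show that $x\in\supp\HH_{f_1,\ldots,f_n}$ if and only if $x\in\cl\{y\in\Omega:y\text{ is }(f_1,\ldots,f_n)\text{-extreme}\}$.

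First I would localize. Choose $\varepsilon>0$ with $\cl B(x,\varepsilon)\subseteq\Omega$ and apply Lemma \ref{lem:loc} to obtain $g_1,\ldots,g_n\in\mathrm{Conv}_{\rm cd}^*(\mathbb{R}^n)$ that agree with $f_1,\ldots,f_n$ on $\cl B(x,\varepsilon)$. As observed in the paragraph following Lemma \ref{lem:loc}, this substitution preserves both $\HH_{f_1,\ldots,f_n}$ on $B(x,\varepsilon)$ and the notion of extremality at every point of $B(x,\varepsilon)$. Hence the two sides of the desired equivalence at $x$ are unchanged if we pass from $f_i$ to $g_i$.

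Next I would apply the assumed Conjecture \ref{conj:schneider} to the lifted bodies $K_{g_1},\ldots,K_{g_n}\in\mathcal{K}^{n+1}$ to obtain
\[
    \supp\SM_{K_{g_1},\ldots,K_{g_n}}
    =\cl_{S^n}\big\{u\in S^n:u\text{ is }(K_{g_1},\ldots,K_{g_n})\text{-extreme}\big\},
\]
and intersect both sides with $S^n_-$. Because $S^n_-$ is open in $S^n$, closures commute with this intersection in the sense that $\cl_{S^n}(A)\cap S^n_-=\cl_{S^n_-}(A\cap S^n_-)$ for any $A\subseteq S^n$. Invoking Lemma \ref{lem:hmu} (which identifies $\supp\HH_{g_1,\ldots,g_n}$ with $\phi^{-1}(S^n_-\cap\supp\SM_{K_{g_1},\ldots,K_{g_n}})$) and Corollary \ref{cor:hessext} (which identifies extremality across the homeomorphism $\phi$) and then pulling back via $\phi^{-1}$ yields
\[
    \supp\HH_{g_1,\ldots,g_n}
    =\cl_{\mathbb{R}^n}\big\{y\in\mathbb{R}^n:y\text{ is }(g_1,\ldots,g_n)\text{-extreme}\big\}.
\]
Testing this identity at $x$ and reverting to the original $f_i$ via the localization completes the argument.

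The genuinely substantive ingredients have all been set up earlier in \S\ref{sec:mixedhess}, so the only real issue is bookkeeping about closures: one must verify that closures in $S^n$, $S^n_-$, $\mathbb{R}^n$, and $\Omega$ all interact correctly. This is routine because $S^n_-$ is open in $S^n$, $\Omega$ is open in $\mathbb{R}^n$, and $\phi$ is a homeomorphism, so no genuine obstruction arises—Conjecture \ref{conj:fcnschneider} is in essence a transcription of Conjecture \ref{conj:schneider} through the lifting $f\mapsto K_f$.
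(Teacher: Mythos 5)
Your proposal is correct and follows the same route as the paper: localize via Lemma \ref{lem:loc}, lift through $f\mapsto K_f$, apply the conjectured characterization for mixed area measures, and transfer back through Lemma \ref{lem:hmu} and Corollary \ref{cor:hessext}. The paper's own proof is terser (it simply observes the conclusion ``follows directly'' from these two results), so the only difference is that you spell out the closure bookkeeping explicitly, which the paper leaves implicit.
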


\begin{proof}
Since the statements of both conjectures are local in nature, it
suffices by the remark following Lemma \ref{lem:loc} to assume that
$f_1,\ldots,f_n\in \mathrm{Conv}_{\rm cd}^*(\mathbb{R}^n)$.
The conclusion now follows directly from Lemma \ref{lem:hmu}
and Corollary \ref{cor:hessext}.
\end{proof}

\begin{rem}
Conjectures \ref{conj:schneider} and \ref{conj:fcnschneider} are
in fact equivalent; the converse implication to Corollary 
\ref{cor:equivconj} can be readily read off from \cite[Corollary 
4.2]{HMU24}.
\end{rem}

The proof of Corollary \ref{cor:mixedhess} is completely analogous.

\begin{proof}[Proof of Corollary \ref{cor:mixedhess}]
The result follows from Theorems \ref{thm:mainupper} and 
\ref{thm:mainlower} by exactly the same argument as in the proof of 
Corollary \ref{cor:equivconj}.
\end{proof}

We conclude by proving Corollary \ref{cor:mixedhnnonsm}. In the proof, we 
will use without comment that Lemma \ref{lem:touchingface} extends 
directly to the convex function setting.

\begin{proof}[Proof of Corollary \ref{cor:mixedhnnonsm}]
As $\HH_{f,g}(D)=0$, Corollary \ref{cor:mixedhnnonsm} shows that
none of the points in $D$ are $(f,g)$-extreme. Thus for any $x\in 
D\backslash R$, it must be the case that $L(f,x)$ and $L(g,x)$ are both 
one-dimensional with the same direction.

Let $L$ be the affine line in $\mathbb{R}^2$ that contains both $L(f,x)$ 
and $L(g,x)$, and let $I$ be the connected component of $L\cap D$ that 
contains $x$. We claim that $I$ must be contained in both $L(f,x)$ and 
$L(g,x)$. Indeed, suppose this is not the case. Then 
there must exist $y\in I\cap \relbd (L(f,x)\cap L(g,x))$.
Therefore $\{y\}$ is a face of either $L(f,x)$ or $L(g,x)$. By Lemma 
\ref{lem:touchingface}, this implies that
$$
	\min\{\dim L(f,y),\dim L(g,y)\}=0,\qquad
	\max\{\dim L(f,y),\dim L(g,y)\}\le 1.
$$
But then $y$ is $(f,g)$-extreme, which entails a contradiction.

As $I$ is contained in both $L(f,x)$ and $L(g,x)$, it follows from
Lemma \ref{lem:touchingface} that $L(f,y)=L(f,x)$ and $L(g,y)=L(g,x)$
for all $y\in I$. Thus $I\cap R=\varnothing$.
\end{proof}

Note that Corollary~\ref{cor:mixedhn} is merely a 
specialization of Corollary \ref{cor:mixedhnnonsm} to the case that $f,g$ 
are smooth, and therefore does not require a separate proof.

\section{Support and projection}
\label{sec:obstruction}

The main aim of this final part of the paper is to clarify the behavior of 
the supports of mixed area measures under projection. As was explained in 
\S\ref{sec:higher}, this presents a basic obstacle to extending the proof 
of Theorem~\ref{thm:mainlower} to more general situations. In the first 
two sections, we will prove the remaining cases of 
Theorem~\ref{thm:mainproj}. In the last section, we present a simple 
example further illuminates some aspects of the structure of 
$(C_1,\ldots,C_{n-1})$-extreme directions.

\subsection{The case $\dim T(K,u)=n-1$ of Theorem \ref{thm:mainproj}}

Let $K\in\mathcal{K}^n$, $u\in S^{n-1}$ with $\dim T(K,u)=n-1$. What is 
special about this situation is that, for sufficiently small 
$\varepsilon>0$, the touching cone $T(K,u)$ bisects $B(u,\varepsilon)$. 
Thus for all $v\in B(u,\varepsilon)$, it must be the case that $T(K,v)\cap 
B(u,\varepsilon)$ lies entirely on one side of the hyperplane that 
contains $T(K,u)$. This property makes it possible to flexibly deform $K$ 
in the direction $T(K,u)^\perp$, as we will presently explain.

Recall that the Minkowski difference of $K,L\in\mathcal{K}^n$ is defined 
as
$$
	K \div L = \{x\in\mathbb{R}^n: x+L \subseteq K\}.
$$
The proof will be based on the following result of Schneider.

\begin{lem}
\label{lem:sch}
Let $K\in\mathcal{K}^n_n$ and $L\in\mathcal{K}^n$, and define
$$
	K_t = \begin{cases}
	K+tL & \text{for }t\ge 0,\\
	K\div (-t)L & \text{for }t<0.
	\end{cases}
$$ 
Suppose that for every $x\in\bd K$, there is $y\in\bd L$ with
$N(K,x)\subseteq N(L,y)$. Then 
$$
	\frac{d}{dt} h_{K_t}(u)\bigg|_{t=0} = h_L(u)
$$
for every $u\in S^{n-1}$. Moreover, there exists $\delta>0$ so that
$$
	\|h_{K_t}-h_K\|_\infty \le \delta|t|
$$
for all sufficiently small $|t|$.
\end{lem}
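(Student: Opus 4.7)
The plan is to establish the pointwise identity $h_{K_t}(u) = h_K(u) + t\, h_L(u)$ for all $u \in S^{n-1}$ and all sufficiently small $|t|$. Both the derivative formula and the Lipschitz bound with $\delta = \|h_L\|_\infty$ are immediate consequences of this identity. The case $t \ge 0$ is trivial, since additivity of support functions under Minkowski addition gives $h_{K+tL} = h_K + t\, h_L$.

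The content lies in the case $t < 0$. Setting $s = -t > 0$, it suffices to prove
$$
(K \div sL) + sL = K
$$
for all sufficiently small $s > 0$; taking support functions of both sides then yields $h_{K \div sL} = h_K - s\, h_L$. The inclusion $\subseteq$ is immediate from the definition of the Minkowski difference. For the reverse inclusion, the key observation is that the hypothesis---for every $x \in \bd K$ there exists $y \in \bd L$ with $N(K,x) \subseteq N(L,y)$---is precisely the classical condition that $L$ slides freely inside $K$: for every $x \in \bd K$ there is $z \in \mathbb{R}^n$ with $z + sL \subseteq K$ and $x \in z + sL$. This gives $z \in K \div sL$ and $x - z \in sL$, whence $x \in (K \div sL) + sL$. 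The interior case is easier: for $x \in \intr K$ with $d(x, \bd K)$ bounded below, any sufficiently small $s$ lets $x - sy + sL \subseteq K$ for an arbitrary choice of $y \in L$.

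The main technical obstacle is deriving the sliding property from the pointwise normal cone condition in a uniform manner, with a single $s_0 > 0$ that works for every $x \in \bd K$ simultaneously. At a fixed boundary point, the inclusion $N(K, x) \subseteq N(L, y)$ dualizes to $T(L, y) \subseteq T(K, x)$ on tangent cones, which is the infinitesimal version of sliding; upgrading this pointwise infinitesimal condition to a uniform positive scale $s_0$ requires a compactness argument on $\bd K$ together with continuity of the relevant geometric quantities in the normal direction. Once $s_0$ is fixed, the identity $(K \div sL) + sL = K$ holds for all $s \in (0, s_0]$, and the lemma follows.
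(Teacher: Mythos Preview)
Your plan aims to prove strictly more than the lemma asserts: you want the identity $h_{K_t}(u)=h_K(u)+t\,h_L(u)$ for all small $|t|$, equivalently $(K\div sL)+sL=K$ for all small $s>0$. This is false under the stated hypothesis. Take $n=2$, let $K$ be the closed unit disk, and let $L=[0,e_1]$. For every $x\in\bd K$ the ray $N(K,x)=\mathbb{R}_+x$ lies in one of the half-planes $N(L,0)=\{u:u_1\le 0\}$ or $N(L,e_1)=\{u:u_1\ge 0\}$, so the normal-cone hypothesis is satisfied. Yet no nondegenerate segment can be a Minkowski summand of a strictly convex body; concretely, $K\div sL=K\cap(K-se_1)$ and
\[
h_{K\div sL}(e_2)=\sqrt{1-\tfrac{s^2}{4}}<1=h_K(e_2)-s\,h_L(e_2)
\]
for every $s>0$. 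So your assertion that the hypothesis ``is precisely the classical condition that $L$ slides freely inside $K$'' is incorrect. The tangent-cone inclusion $T_yL\subseteq T_xK$ you obtain by dualizing is only an infinitesimal statement, and the compactness argument you allude to cannot upgrade it to a uniform positive scale $s_0$: in the example above the obstruction at the point $x=e_2$ persists for every $s>0$.

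The lemma asks only for the derivative at $t=0$ and a Lipschitz bound, and in the example one indeed has $\frac{d}{ds}\big|_{s=0}\sqrt{1-s^2/4}=0=h_L(e_2)$; the defect from your claimed identity is of order $s^2$. The paper does not give a self-contained argument here but cites \cite[Lemma~7.5.4 and p.~425]{Sch14}, where Schneider establishes the one-sided derivative and the Lipschitz estimate for $t<0$ directly, without claiming that $sL$ is a summand of $K$.
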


\begin{proof}
The first statement is given in \cite[Lemma 7.5.4]{Sch14}
for $K,L\in\mathcal{K}^n_n$; however, its proof only uses that   
$K\in\mathcal{K}^n_n$ and extends \emph{verbatim} to any
$L\in\mathcal{K}^n$.
The
second statement is trivial for $t\ge 0$, and is proved in 
\cite[p.\ 425]{Sch14} for $t<0$.
\end{proof}

The assumption of Lemma \ref{lem:sch}, which is formulated in terms of 
normal cones, is automatically implied by the corresponding property of 
touching cones.

\begin{lem}
\label{lem:schtouch}
Let $K,L\in\mathcal{K}^n$, and suppose that $T(K,v)\subseteq T(L,v)$ for
all $v\in S^{n-1}$. Then for every $x\in\bd K$, there is $y\in\bd L$ with
$N(K,x)\subseteq N(L,y)$.
\end{lem}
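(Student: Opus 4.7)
The plan is to reduce the desired inclusion of normal cones to an instance of the assumed inclusion of touching cones. The basic idea is that for a carefully chosen normal direction $v$ of $K$ at $x$, the touching cone $T(K,v)$ recovers the full normal cone $N(K,x)$, so that the hypothesis $T(K,v)\subseteq T(L,v)$ immediately forces $N(K,x)$ to be contained in a normal cone of $L$.

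First I would fix $x\in \bd K$ and choose $v\in \relint N(K,x)\cap S^{n-1}$. This is possible because $N(K,x)\ne\{0\}$ for any boundary point $x\in\bd K$, regardless of whether $K$ is full-dimensional (if $\dim K=n$, this is standard; if $\dim K<n$, then $N(K,x)$ already contains the nontrivial orthogonal complement of the affine hull of $K$). Having chosen $v$, note that $N(K,x)$ is itself a normal cone of $K$, and so a face of itself; since $v$ lies in its relative interior, Remark~\ref{rem:gentouch} identifies
\[
T(K,v) = N(K,x).
\]
By hypothesis we thus get $N(K,x)\subseteq T(L,v)$.

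Next I would extract a normal cone of $L$ from $T(L,v)$. By definition $T(L,v)$ is a face of $N(L,F(L,v))$, so in particular $T(L,v)\subseteq N(L,F(L,v))$. Choose any point $y\in\relint F(L,v)$; then $y\in\bd L$ (since $F(L,v)$ is either a proper face of $L$ when $\dim L=n$, or is contained in $L=\bd L$ when $\dim L<n$), and as recalled in \S\ref{sec:normalcones} we have $N(L,y)=N(L,F(L,v))$. Chaining the inclusions yields
\[
N(K,x) = T(K,v) \subseteq T(L,v) \subseteq N(L,F(L,v)) = N(L,y),
\]
as desired.

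The argument is essentially a direct unpacking of definitions once one observes the key identity $T(K,v)=N(K,x)$ for $v$ in the relative interior of $N(K,x)$, so I do not anticipate a serious obstacle. The only point that requires a little care is the verification that $\relint N(K,x)\cap S^{n-1}$ is nonempty, i.e., that $N(K,x)$ contains nonzero vectors whenever $x\in\bd K$; this handles uniformly both the full-dimensional and lower-dimensional cases.
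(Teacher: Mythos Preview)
Your proof is correct and follows essentially the same approach as the paper: choose $v\in\relint N(K,x)$ so that $T(K,v)=N(K,x)$, apply the hypothesis, and then pick $y\in\relint F(L,v)$ to obtain $T(L,v)\subseteq N(L,y)$. The paper's version is simply a one-line compression of your argument, omitting the justifications you spelled out.
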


\begin{proof}
Given any $x\in\bd K$, we can choose $v\in\relint N(K,x)$ and
$y\in \relint F(L,v)$.
Then $N(K,x)=T(K,v)\subseteq T(L,v)\subseteq N(L,y)$.
\end{proof}

Now let $K\in\mathcal{K}^n$, $u\in S^{n-1}$ with $\dim T(K,u)=n-1$, and
$w\in T(K,u)^\perp \cap S^{n-1}$. Choose $\varepsilon>0$ sufficiently 
small that $T(K,u)$ bisects $B(u,\varepsilon)$. Then for every
$v\in B(u,\varepsilon)$, we have that $T(K,v)\cap B(u,\varepsilon)$
is contained either in $H_{w,0}^+$, $H_{w,0}$, or $H_{w,0}^-$. The latter 
sets are precisely the touching cones of the interval $[0,w]$.
This suggests that we aim to apply Lemma \ref{lem:sch} with $L=[0,w]$.

There are two problems with this idea. First, only the intersection of the 
touching cones of $K$ with $B(u,\varepsilon)$, rather than the touching 
cones themselves, are guaranteed to be contained in a touching cone of 
$[0,w]$. Second, Lemma \ref{lem:sch} requires that $K$ has nonempty 
interior, which we have not assumed. The following lemma shows that we can 
modify $K$ in such a way that the conditions of Lemma \ref{lem:sch} are 
satisfied, without changing the part of its boundary with normal 
directions in $B(u,\varepsilon)$.

\begin{lem}
\label{lem:hatk}
Fix $K\in\mathcal{K}^n$ and $u\in S^{n-1}$ with $\dim T(K,u)=n-1$, and
let $w\in T(K,u)^\perp \cap S^{n-1}$. Choose $\varepsilon>0$ sufficiently
small that $T(K,u)$ bisects $B(u,\varepsilon)$.
Then there exists $\tilde K\in\mathcal{K}_n^n$ with the following
properties.
\smallskip
\begin{enumerate}[a.]
\itemsep\smallskipamount
\item $F(\tilde K,v)=F(K,v)$ for all $v\in B(u,\varepsilon)$.
\item $T(\tilde K,v)\subseteq T([0,w],v)$ for all $v\in S^{n-1}$.
\end{enumerate}
\end{lem}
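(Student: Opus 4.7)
The plan exploits the structural fact that $\dim T(K,u) = n-1$ forces $\dim F(K,u) \le 1$, and that whenever $F(K,u)$ is one-dimensional it must lie in the direction $w$ (since $F(K,u)$ is perpendicular to $N(K,F(K,u))\supseteq T(K,u)$, whose span is $w^\perp$, leaving only $\pm w$ as allowable segment directions). I will first carry out the construction under the assumption that $F(K,u) = [p_0, p_0 + \ell w]$ is a genuine segment; the degenerate case $\dim F(K,u)=0$ is where I expect the main technical difficulty to lie, and I handle it at the end by a perturbation argument.

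In the main case, I set
\[
\tilde K := F(K,u) + T,\qquad T := \mathrm{conv}\bigl(\{0\}\cup(-Ru+sD)\bigr),
\]
where $D$ denotes the closed unit ball of the $(n-2)$-dimensional subspace $u^\perp\cap w^\perp$, and $R\gg s>0$ are chosen so that, viewed from every direction in $B(u,\varepsilon)$, the apex $0$ of $T$ dominates the ``base'' $-Ru+sD$. Geometrically, $T$ is a ``downward pyramid'' sitting inside $w^\perp$; since $F(K,u)\subset\mathbb{R}w$ and $T\subset w^\perp$ sit in orthogonal complementary subspaces, the Minkowski sum has dimension $1+(n-1)=n$, so $\tilde K\in\mathcal K_n^n$.

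Property~\textit{a} then follows from additivity of exposed faces: one has $F(\tilde K,v)=F(F(K,u),v)+F(T,v)$, and the choice $R\gg s$ guarantees $F(T,v)=\{0\}$ for every $v\in B(u,\varepsilon)$, leaving $F(\tilde K,v)=F(F(K,u),v)$. A case analysis on the sign of $\langle v,w\rangle$, combined with the bisection property of $T(K,u)$, identifies this with $F(K,v)$: when $\langle v,w\rangle\neq 0$ both faces collapse to the appropriate endpoint of the segment, and when $\langle v,w\rangle=0$ one has $v\in\relint T(K,u)$ for $\varepsilon$ small, which by duality yields $F(K,v)=F(K,u)$. Property~\textit{b} is then immediate from Lemma~\ref{lem:touchingsum}: since $F(K,u)$ is a translate and positive rescaling of $[0,w]$, its touching cones coincide with those of $[0,w]$, so
\[
T(\tilde K,v) = T(F(K,u),v)\cap T(T,v)\subseteq T(F(K,u),v) = T([0,w],v).
\]

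The main obstacle is the degenerate subcase $F(K,u)=\{p_0\}$, which can only occur for non-polytopal $K$ (as in a half-ball at an equatorial corner), and for which the construction above yields a body of dimension only $n-1$. I plan to handle this by replacing $K$ with $K_\delta := K+[0,\delta w]$ to reduce to the segment case, applying the main construction to obtain $\tilde K_\delta$, and controlling the limit $\delta\downarrow 0$. The verification of property~\textit{a} requires the most care in this limit, because for directions $v$ with $\langle v,w\rangle=0$ the face $F(K_\delta,v)$ differs from $F(K,v)$ by a summand supported on $[0,\delta w]$ that must be tracked precisely; I expect this bookkeeping to be the technical heart of the proof.
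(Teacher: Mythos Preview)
Your construction differs substantially from the paper's, and unfortunately the ``main case'' (where $F(K,u)$ is a genuine segment) already contains a fatal gap. The crux of your argument for property~\textit{a} is the assertion that $F(K,v)=F(F(K,u),v)$ for every $v\in B(u,\varepsilon)$; equivalently, that $F(K,v)\subseteq F(K,u)$ whenever $v$ is close to $u$. The bisection hypothesis does not imply this. Concretely, in $\mathbb{R}^3$ with $u=e_3$ and $w=e_1$, take
\[
K_1=\{(x_1,x_2,x_3):0\le x_1\le 1,\ -1\le x_3\le-|x_2|\},
\qquad
K=\mathrm{conv}\bigl(K_1\cup\{(1+s,0,-s^2):0<s\le 1\}\bigr).
\]
Then $F(K,e_3)=[0,e_1]$ is a segment, $T(K,e_3)=\{(0,a,b):b\ge|a|\}$ is two-dimensional, and bisection holds for (say) $\varepsilon=\tfrac12$. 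But for $v=(\eta,0,1)/\sqrt{1+\eta^2}$ with any $\eta>0$, the linear functional $\langle v,\cdot\rangle$ is maximized on $K$ at the point $(1+\eta/2,\,0,\,-\eta^2/4)$ on the parabolic arc, not at $p_+=e_1$; hence $F(K,v)\ne\{p_+\}=F(F(K,u),v)$. The mechanism is that the arc leaves $p_+$ tangent to $w$, which forces $N(K,p_+)\subseteq\{v_1\le 0\}$, so no direction $v$ with $\langle v,w\rangle>0$ lies in $N(K,p_+)$. Since your $\tilde K$ depends on $K$ only through $F(K,u)$, it cannot recover $F(K,v)$ here, and property~\textit{a} fails for every admissible $\varepsilon$. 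Your perturbation scheme for the degenerate case inherits the same problem.

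The paper's construction sidesteps this entirely by keeping $K$ in the picture: it sets $\tilde K=K+\hat B-su$, where $\hat B=\mathrm{conv}(B\cup\{su\})$ is a cap body with $N(\hat B,su)=\mathbb{R}_+\cl B(u,\varepsilon)$. Then $F(\tilde K,v)=F(K,v)+F(\hat B,v)-su=F(K,v)$ for $v\in B(u,\varepsilon)$ is automatic from face additivity, with no need to know what $F(K,v)$ actually is. Property~\textit{b} comes from Lemma~\ref{lem:touchingsum}: outside $B(u,\varepsilon)$ the smooth summand collapses $T(\tilde K,v)$ to the ray $\mathbb{R}_+v$, while inside $B(u,\varepsilon)$ one gets $T(\tilde K,v)=T(K,v)\cap\mathbb{R}_+\cl B(u,\varepsilon)$, and the bisection hypothesis then confines this to one side of $w^\perp$. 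No case distinction on $\dim F(K,u)$ is needed.
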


\begin{proof}
Let $\hat B=\mathrm{conv}\{B,su\}$, where $s>1$ is chosen so that
$N(\hat B,su) = \mathbb{R}_+\cl B(u,\varepsilon)$. We define
$\tilde K = K + \hat B - su$. Then clearly
$$
	F(\tilde K,v) = F(K,v) + F(\hat B,v) - su = F(K,v)
$$
for all $v\in B(u,\varepsilon)$, which verifies part $a$. Now note that
\begin{align*}
	T(\tilde K,v) &=
	T(K,v) \cap T(\hat B,v) \\ & =
	\begin{cases}
	T(K,v)\cap \mathbb{R}_+\cl B(u,\varepsilon) & 
	\text{for }v\in B(u,\varepsilon), \\
	\mathbb{R}_+v & \text{for }v\in S^{n-1}\backslash B(u,\varepsilon).	
	\end{cases}
\end{align*}
As $T(K,u)\cap \mathbb{R}_+\cl B(u,\varepsilon)=w^\perp \cap 
\mathbb{R}_+\cl B(u,\varepsilon)$ bisects 
$B(u,\varepsilon)$ and as any two touching cones of 
$K$ are either equal or disjoint, part $b$ follows readily.
\end{proof}

We finally recall the following well known fact, see, e.g.,
\cite[Lemma 2.12]{HR24}.

\begin{lem}
\label{lem:malocal}
Let $K,L,C_1,\ldots,C_{n-2}\in\mathcal{K}^n$ and $A\subseteq S^{n-1}$.
If $F(K,v)=F(L,v)$ for all $v\in A$, then 
$\SM_{K,C_1,\ldots,C_{n-2}}|_A =
\SM_{L,C_1,\ldots,C_{n-2}}|_A$.
\end{lem}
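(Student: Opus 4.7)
The strategy is to reduce the mixed case to the pure surface area measure, where the locality is manifest via the Hausdorff measure representation \eqref{eq:areameas}, and then extract the mixed identity by polynomial comparison in auxiliary parameters.

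First, I would observe that the face-agreement hypothesis is stable under Minkowski addition with arbitrary scalar multiples of the $C_i$. For any nonnegative scalars $\lambda_1,\ldots,\lambda_{n-2}$, set
$$
	K_\lambda = K + {\textstyle\sum_{i=1}^{n-2}}\lambda_i C_i,\qquad
	L_\lambda = L + {\textstyle\sum_{i=1}^{n-2}}\lambda_i C_i.
$$
By the face additivity \eqref{eq:faceaddition}, for every $v\in A$ both $F(K_\lambda,v)$ and $F(L_\lambda,v)$ are equal to $F(K,v)+\sum_i\lambda_i F(C_i,v)=F(L,v)+\sum_i\lambda_i F(C_i,v)$, so $F(K_\lambda,v)=F(L_\lambda,v)$ as subsets of $\mathbb{R}^n$.

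Second, for any Borel $B\subseteq A$, the sets $n_{K_\lambda}^{-1}(B)=\bigcup_{v\in B}F(K_\lambda,v)$ and $n_{L_\lambda}^{-1}(B)=\bigcup_{v\in B}F(L_\lambda,v)$ coincide in $\mathbb{R}^n$ by the first step. The area measure representation \eqref{eq:areameas} therefore yields $\SM_{K_\lambda[n-1]}(B)=\mathcal{H}^{n-1}(n_{K_\lambda}^{-1}(B))=\mathcal{H}^{n-1}(n_{L_\lambda}^{-1}(B))=\SM_{L_\lambda[n-1]}(B)$, i.e.\ $\SM_{K_\lambda[n-1]}|_A=\SM_{L_\lambda[n-1]}|_A$, for every choice of $\lambda$.

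Finally, I would expand both surface area measures as polynomials in $\lambda$ using multilinearity of mixed area measures:
$$
	\SM_{K_\lambda[n-1]} = \!\!\!\sum_{k_0+\cdots+k_{n-2}=n-1}\!\!\!
	\binom{n-1}{k_0,\ldots,k_{n-2}}\lambda_1^{k_1}\cdots\lambda_{n-2}^{k_{n-2}}\,
	\SM_{K[k_0],C_1[k_1],\ldots,C_{n-2}[k_{n-2}]},
$$
with an analogous expansion for $L_\lambda$. Restricting both polynomial identities to $A$ and comparing the coefficient of the monomial $\lambda_1\cdots\lambda_{n-2}$, which has multinomial coefficient $(n-1)!$ and picks out precisely $\SM_{K,C_1,\ldots,C_{n-2}}$ on the $K$-side and $\SM_{L,C_1,\ldots,C_{n-2}}$ on the $L$-side, yields the desired equality. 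The proof is entirely routine once the first step is identified; the only conceptual point is recognizing that the locality of $\SM_{M[n-1]}$ visible in \eqref{eq:areameas} transfers to every mixed area measure through this polynomial expansion, so no real obstacle arises.
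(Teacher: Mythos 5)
Your argument is correct. The paper itself does not prove Lemma~\ref{lem:malocal} but cites it as a known result (\cite[Lemma 2.12]{HR24}), so there is no in-paper proof to compare against; your proposal supplies a valid, self-contained argument. The three ingredients you use---face additivity \eqref{eq:faceaddition}, the Hausdorff measure representation \eqref{eq:areameas} of $\SM_{M[n-1]}$, and extraction of $\SM_{K,C_1,\ldots,C_{n-2}}$ as the coefficient of $\lambda_1\cdots\lambda_{n-2}$ in the polynomial expansion of $\SM_{K_\lambda[n-1]}$---are exactly what one would expect, and each step checks out. In particular: the face equality $F(K_\lambda,v)=F(L_\lambda,v)$ on $A$ follows pointwise from \eqref{eq:faceaddition}; the resulting set equality $n_{K_\lambda}^{-1}(B)=n_{L_\lambda}^{-1}(B)\subset\mathbb{R}^n$ gives equal $\mathcal{H}^{n-1}$-measures and hence $\SM_{K_\lambda[n-1]}|_A=\SM_{L_\lambda[n-1]}|_A$; and since this holds for all $\lambda\in[0,\infty)^{n-2}$, evaluating on any Borel $B\subseteq A$ yields an identity of real polynomials in $\lambda$, whose $\lambda_1\cdots\lambda_{n-2}$ coefficient isolates the desired mixed area measure up to the factor $(n-1)!$. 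This is the standard ``locality by polarization'' route, and it is the argument one would expect to find in a reference such as \cite{HR24}.
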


We can now conclude the proof.

\begin{proof}[Proof of the case $\dim T(K,u)=n-1$ of Theorem 
\ref{thm:mainproj}]
Fix $K,C_1,\ldots,C_{n-2}\in\mathcal{K}^n$ and $u\in S^{n-1}$ so that
$\dim T(K,u)=n-1$, and let $w\in T(K,u)^\perp\cap S^{n-1}$. Suppose that 
$u\not\in \supp \SM_{K,C_1,\ldots,C_{n-2}}$.
Then there exists $\varepsilon>0$ so that
$T(K,u)$ bisects $B(u,\varepsilon)$ and
$\SM_{K,C_1,\ldots,C_{n-2}}(B(u,\varepsilon))=0$.
Construct $\tilde K$ as in
Lemma \ref{lem:hatk}.

By Lemma \ref{lem:malocal} and part $a$ of Lemma \ref{lem:hatk}, we have
$\SM_{\tilde K,C_1,\ldots,C_{n-2}}(B(u,\varepsilon))=0$. In particular, if 
$f\in 
C^2(S^{n-1})$ is chosen such that $f(v)>0$ for $v\in B(u,\varepsilon)$ and
$f(v)=0$ for $v\in S^{n-1}\backslash B(u,\varepsilon)$, then
we can write
$$
	\int h_{\tilde K}\,d\SM_{f,C_1,\ldots,C_{n-2}} = 
	\int f \,d\SM_{\tilde K,C_1,\ldots,C_{n-2}} = 0.
$$
Now define $\tilde K_t$ as in Lemma \ref{lem:sch}
with $K\leftarrow \tilde K$ and $L\leftarrow [0,w]$. Then
$$
	\int h_{\tilde K_t}\,d\SM_{f,C_1,\ldots,C_{n-2}} = 
	\int f \,d\SM_{\tilde K_t,C_1,\ldots,C_{n-2}} \ge 0
$$
for all $t$ sufficiently small. In particular, this integral is minimized 
at $t=0$.
By Lemma \ref{lem:schtouch} and part $b$ of Lemma \ref{lem:hatk}, 
the assumption of Lemma \ref{lem:sch} is satisfied. Thus a routine 
application of dominated convergence yields
$$
	\int h_{[0,w]}\,d\SM_{f,C_1,\ldots,C_{n-2}} =
	\frac{d}{dt}\int h_{\tilde K_t}\,d\SM_{f,C_1,\ldots,C_{n-2}}
	\bigg|_{t=0} = 0.
$$
The projection formula (see \S\ref{sec:mvma}) now yields
$$
	\int f\, d\SM_{\proj_{w^\perp}C_1,\ldots,\proj_{w^\perp}C_{n-2}} = 0,
$$
so that $u\not\in \supp 
\SM_{\proj_{w^\perp}C_1,\ldots,\proj_{w^\perp}C_{n-2}}$.
We have therefore proved the contrapositive of the desired statement,
concluding the proof.
\end{proof}

\subsection{The case $1<\dim T(K,u)<n-1$}

We first prove part $b$ of Theorem \ref{thm:mainproj} in the special 
case that $\dim T(K,u)=2$. The construction is then readily 
adapted to any $1<\dim T(K,u)<n-1$ at the end of the proof.

The idea of the proof is to construct 
$K\in\mathcal{K}^n$ with the following properties:
\smallskip
\begin{enumerate}[a.]
\itemsep\medskipamount
\item $\dim T(K,u')=2$ for all $u'$ in a neighborhood of $u$.
\item For every $v\in T(K,u)^\perp$, the union of the
touching cones $T(K,u')$ that are contained in $v^\perp$ 
has Hausdorff dimension at most $n-2$. Thus
$\dim T(\proj_{v^\perp}K,u')=\dim (T(K,u')\cap v^\perp)=1$
on a dense set of $u'$ in a neighborhood of $u$.
\end{enumerate}
\smallskip
Therefore, by Corollary \ref{cor:areasupp}, $u\in\supp 
\SM_{\proj_{v^\perp}K[n-2]}$ for all $v\in T(K,u)^\perp$ but
$u\not\in\supp \SM_{K[n-1]}$, which proves part $b$ of Theorem 
\ref{thm:mainproj} when $\dim T(K,u)=2$.

It is somewhat more convenient to construct the polar body $L=K^\circ$.
The main part of the argument is contained in the following lemma. 
Throughout this section, we denote by $e_1,\ldots,e_n$ the coordinate
basis of $\mathbb{R}^n$.

\begin{lem}
\label{lem:nastybody}
For every $n\ge 4$, there exists $L\in\mathcal{K}^n_{(o)}$
with the following properties.
\begin{enumerate}[a.]
\itemsep\medskipamount
\item $[e_1-e_n,e_1+e_n]$ is a $1$-face of $L$.
\item There is a neighborhood $O\subset \bd L$ of $e_1$ so that 
every $y\in O$ lies in the relative interior of a $1$-face $F_y$ of $L$.
\item For every $(n-1)$-dimensional subspace $E$ of $\mathbb{R}^n$ that 
contains $[e_1-e_n,e_1+e_n]$, the union of all the faces $F_y\subset E$ 
has Hausdorff dimension at most $n-3$.
\end{enumerate}
\end{lem}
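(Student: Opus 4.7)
The plan is to construct $L\in\mathcal{K}^n_{(o)}$ directly as (an appropriate truncation of) the convex hull of a smoothly varying $(n-2)$-parameter family of line segments. Writing the coordinates of $\mathbb{R}^n$ as $(x_1, y', x_n)$ with $y'\in\mathbb{R}^{n-2}$, let $U\subset\mathbb{R}^{n-2}$ be a small open ball around the origin, and for each $y\in U$ define the segment
\[
	F_y \;=\; \{(1 - f(y),\; y + s\,\sigma(y),\; s)\in\mathbb{R}^n\,:\,s\in[-1,1]\},
\]
where $f\colon U\to[0,\infty)$ is smooth and strictly convex with $f(0)=0$ and $\nabla f(0)=0$, and $\sigma\colon U\to\mathbb{R}^{n-2}$ is a smooth ``twist'' map with $\sigma(0)=0$ (whose precise form is discussed below). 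Set $L$ to be the closure of the convex hull of $\bigcup_{y\in\overline{U}}F_y$, intersected with $\{-M\le x_1\le 1\}$ and possibly unioned with a small ball around the origin to guarantee $L\in\mathcal{K}^n_{(o)}$.

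Verification of (a) is immediate: $F_0=[e_1-e_n,e_1+e_n]$, and since $f(y)>0$ for $y\neq 0$, the hyperplane $\{x_1=1\}$ exposes precisely $F_0$. For (b), one argues that each $F_y$ is a $1$-face of $L$ by exhibiting, via the implicit function theorem starting from $v(0)=e_1$, a smooth family of outward normals $v(y)\in S^{n-1}$ satisfying $v(y)\perp(0,\sigma(y),1)$ and on which $\langle v(y),\cdot\rangle$ is maximal on $F_y$ over all of $L$; the strict convexity of $f$ then forces the remaining directions to be regular and a neighborhood of $e_1$ in $\bd L$ to be covered by the relative interiors of the segments $F_y$. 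For (c), a hyperplane $E\supset[e_1-e_n,e_1+e_n]$ has $E^\perp=\mathbb{R}(0,u',0)$ for some nonzero $u'\in\mathbb{R}^{n-2}$, and $F_y\subset E$ amounts to the two simultaneous equations $u'\cdot y=0$ and $u'\cdot\sigma(y)=0$ on $y\in U$. The plan is to choose $\sigma$ so that for every nonzero $u'$ these two equations cut a set of Hausdorff dimension at most $n-4$: when $n-2$ is even take $\sigma(y)=Jy$ for a skew-symmetric matrix $J$ without nonzero real eigenvectors (so $u'$ and $J^\mathsf{T}u'$ are always linearly independent), and when $n-2$ is odd append a generic quadratic correction $\sigma(y)=Jy+Q(y,y)$ that transversally breaks the inevitable one-dimensional kernel of $J$. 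In either case, the union of the segments $F_y$ with $F_y\subset E$ has Hausdorff dimension at most $(n-4)+1=n-3$.

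The principal technical obstacle is the face-structure requirement in (b): one must ensure that each $F_y$ is indeed extremal in $L$, i.e., not absorbed into a higher-dimensional face of $L$ and not accidentally strictly interior. This amounts to a compatibility condition between $f$, $\sigma$, and the global shape of $L$: the convex-hull operation could in principle introduce spurious $2$-faces connecting pairs of segments $F_{y_1}$ and $F_{y_2}$ with $y_1\neq y_2$, which would violate (b). The resolution is to take the truncating back wall of $L$ sufficiently far away, to choose $U$ small enough that the twist $\sigma$ is a small perturbation, and to verify that the assignment $y\mapsto v(y)$ of outward normals traces an $(n-2)$-dimensional submanifold of $S^{n-1}$ whose tangent spaces at each point miss the linear direction $d(y)=(0,\sigma(y),1)$; this ensures that $F_y$ is the unique exposed face in direction $v(y)$ and that as $y$ varies the corresponding faces foliate a full neighborhood of $e_1$ in $\bd L$.
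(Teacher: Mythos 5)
Your plan of building $L$ directly as (a truncation of) the convex hull of a smoothly parametrized family of segments $F_y$ is a sensible route, but there is a genuine gap at precisely the step you flag as the ``principal technical obstacle,'' and your proposed resolution does not close it. The requirement that each $F_y$ be a $1$-face of $L$ is not a pointwise transversality condition of the kind the implicit function theorem addresses; it is global. You need a linear functional $\langle v(y),\cdot\rangle$ to be maximized over \emph{all} of $L$ exactly on $F_y$, which requires $\langle v(y),z\rangle < \max_{F_y}\langle v(y),\cdot\rangle$ for every $z$ in every other segment $F_{y'}$ with $y'\ne y$ \emph{and} for every $z$ in the truncating wall and the ball you adjoin to get $L\in\mathcal{K}^n_{(o)}$. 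Shrinking $U$ and taking $\sigma$ small does not enforce this: a convex hull of a parametrized family of segments whose endpoints do not trace strictly convex surfaces can absorb segments into higher-dimensional faces, or leave boundary points near $e_1$ uncovered by any $F_y$. The paper avoids this difficulty structurally by taking $L=\mathrm{conv}(M-e_n,\,U+e_n)$ with $M,U$ strictly convex bodies in $e_n^\perp$: every extreme point of $L$ lies in $\bd M-e_n$ or $\bd U+e_n$, and since strictly convex bodies have only $0$-dimensional proper faces, any face of $L$ can meet each of these two sets in at most one point, forcing $\dim F_y=1$ for every non-extreme boundary point $y$ near $e_1$---with nothing further to check.

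To repair your construction you would need, at minimum, to arrange that the two endpoint maps $y\mapsto\bigl(1-f(y),\,y\pm\sigma(y),\,\pm 1\bigr)$ parametrize the graphs of (strictly) convex functions in the slices $\{x_n=\pm 1\}$, i.e.\ that $f\circ\phi_\pm^{-1}$ is strictly convex where $\phi_\pm(y)=y\pm\sigma(y)$. This is not automatic for strictly convex $f$ and an arbitrary small twist $\sigma$, though it can be engineered by taking $f$ uniformly convex with large modulus and $\sigma$ correspondingly small; but at that point your construction essentially specializes to the paper's, with $U$ and $M$ recovered as the caps at $x_n=\pm 1$. Separately, your dimension count in part~(c) for $n-2$ odd is a heuristic: appending a ``generic'' quadratic $Q$ does not by itself give the bound $\dim\le n-4$ for \emph{every} nonzero $u'$---you must rule out, for each $u'$, the degenerate case in which $u'\cdot\sigma(y)=0$ holds identically on the hyperplane $\{u'\cdot y=0\}$. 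The paper handles exactly this by a specific exponential perturbation $f$ and the $u'$-by-$u'$ case analysis in Appendix~\ref{sec:nastyexplicit}; a comparable verification is missing from your sketch.
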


\begin{proof}
Let $U$ be the Euclidean unit ball in $\mathbb{R}^{n-1}$ and define
$$
	M = \big\{x\in\mathbb{R}^{n-1}:
	x_1^2+\cdots+x_{n-2}^2+(x_{n-1}-t f(x_1,\ldots,x_{n-2}))^2
	\le 1\big\},
$$
where $f$ is a smooth function to be chosen below such that 
$f(1,0,\ldots,0)=0$.
We choose $t>0$ sufficiently small so that $M$ is strictly convex.

In the following, we will identify $U$ and $M$ with their natural
embedding into $e_n^\perp\subset\mathbb{R}^n$.
We now define the convex body $L\in\mathcal{K}^n_{(o)}$ as
$$
	L = \mathrm{conv}\big(M-e_n,U+e_n\big).
$$
We must check that this body satisfies all the desired properties.

\medskip

\textit{Part $a$.} It is readily seen that $F(U,e_1)=F(M,e_1)=\{e_1\}$.
Therefore $H_{e_1,1}$ is a supporting hyperplane of $L$ and
$F(L,e_1) = L\cap H_{e_1,1} = [e_1-e_n,e_1+e_n]$.

\medskip

\textit{Part $b$.}
Choose $O=B(e_1,\varepsilon)\cap \bd L$ with $\varepsilon<1$. By 
construction, every extreme point of $L$ is contained either in $\bd 
M-e_n$ or $\bd U+e_n$. Thus every $y\in O$ is a non-extreme boundary point 
of $L$, and is therefore in the relative interior of a face $F_y$ of $L$ 
with $\dim F_y\ge 1$. But $F_y\cap (\bd U+e_n)$ and $F_y\cap 
(\bd M-e_n)$ must be singletons, as $U$ and $M$ are strictly convex. Thus 
$\dim F_y=1$ for all $y\in O$.

\medskip

\textit{Part $c$.}
Let $y\in O$ and $w\in N(L,y)\cap S^{n-1}$. By part $b$, the exist points
$a\in \bd M$ and $b\in \bd U$ so that $F_y=[a-e_n,b+e_n]\subseteq 
F(L,w)$. Therefore
$$
	F(M,u)=\{a\} \quad \text{and} \quad
	F(U,u)=\{b\} \quad \text{with} \quad
	u=\frac{\proj_{e_n^\perp}w}{\|\proj_{e_n^\perp}w\|},
$$
where we used that $U$ and $M$ are strictly convex. By the definition of 
$M$, that
$u$ is a normal vector to $M$ at the boundary point $a$ implies that
$$
	u = 
	\frac{\Gamma(a)}{\|\Gamma(a)\|}
	\quad\text{with}\quad
	\Gamma(a) =
	\begin{bmatrix}
	a_1 - (a_{n-1}-tf(a))\, t \partial_1f(a) \\
	\vdots \\
	a_{n-2} - (a_{n-1}-tf(a))\, t \partial_{n-2}f(a) \\
	a_{n-1}-tf(a)
	\end{bmatrix}.
$$
On the other hand, clearly $b=u$.

Let $E$ be an $(n-1)$-dimensional subspace of $\mathbb{R}^n$ that
contains $[e_1-e_n,e_1+e_n]$. Then there exists a vector 
$(v_2,\ldots,v_{n-1})\ne 0$
so that
$$
	E = \{x\in\mathbb{R}^n:
	v_2x_2+\cdots+v_{n-1}x_{n-1}=0\}.
$$
That $F_y\subset E$ requires that $a,b\in E$, 
which yields the system of equations
\begin{align}
\label{eq:nastysystem1}
	&
	a_1^2+\cdots+a_{n-2}^2+(a_{n-1}-t f(a))^2=1,\\
\label{eq:nastysystem2}
	&
	v_2a_2+\cdots+v_{n-1}a_{n-1}=0,\\
\label{eq:nastysystem3}
	&
	(v_2 \partial_2f(a) +
	\cdots +
	v_{n-2} \partial_{n-2}f(a) )
	(a_{n-1}-tf(a)) +
	v_{n-1} f(a) = 0.
\end{align}
It is natural to expect that the set $S$ of solutions 
$a\in\mathbb{R}^{n-1}$ 
of this system of $3$ equations has Hausdorff dimension at most $n-4$.
We choose the function $f$ in the definition of $L$ so that this is indeed 
the case for \emph{every} choice of the vector $v$. An explicit 
construction of such a function may be found
in Appendix \ref{sec:nastyexplicit}, but its precise form is irrelevant
for what follows.

To complete the proof, note that we have shown that every $F_y$
has the form $\big[a-e_n,\frac{\Gamma(a)}{\|\Gamma(a)\|}+e_n\big]$ for 
$a\in S$. As the map $a\mapsto \frac{\Gamma(a)}{\|\Gamma(a)\|}$
is locally Lipschitz,
$$
	\bigcup_{y\in O}F_y = 
	\bigg\{
	\lambda(a-e_n) + (1-\lambda)\bigg(
	\frac{\Gamma(a)}{\|\Gamma(a)\|}+e_n\bigg) :
	a\in S,~\lambda\in[0,1]
	\bigg\}
$$
is the image by a locally Lipschitz function of a set $S\times [0,1]$
of Hausdorff dimension at most $n-3$, and thus has  
dimension at most $n-3$ itself by \cite[Theorem 7.5]{Mat95}.
\end{proof}

We can now prove part $b$ of Theorem \ref{thm:mainproj}
in the case $\dim T(K,u)=2$.

\begin{cor}
\label{cor:nastynasty}
There exists $K\in\mathcal{K}^n_{(o)}$ and $u\in S^{n-1}$ with
$\dim T(K,u)=2$ so that
$u\in\supp \SM_{\proj_{v^\perp}K[n-2]}$ for every
$v\in T(K,u)^\perp$, but $u\not\in\supp \SM_{K[n-1]}$.
\end{cor}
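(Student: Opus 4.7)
The plan is to take $K = L^\circ$ with $L\in\mathcal{K}^n_{(o)}$ the body constructed in Lemma \ref{lem:nastybody}, and to pick $u = e_1$. Throughout, we will exploit the bijection $F\mapsto\mathbb{R}_+F$ between faces of $L = K^\circ$ and touching cones of $K$, together with its inverse $T\mapsto T\cap\bd K^\circ$, as described in \S\ref{sec:touchingcones}. All three conditions claimed in the corollary follow by reading off information about touching cones of $K$ from facial information about $L$.

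We first observe that property $a$ of Lemma \ref{lem:nastybody} and the duality give $T(K,u) = \mathbb{R}_+[e_1-e_n,e_1+e_n]$, which is $2$-dimensional and contains $u = e_1 = \tfrac{1}{2}((e_1-e_n)+(e_1+e_n))$ in its relative interior. This forces $T(K,u)^\perp = \mathrm{span}\{e_2,\ldots,e_{n-1}\}$, so every $v\in T(K,u)^\perp$ satisfies $v^\perp\supseteq\{e_1,e_n\}$ and hence $v^\perp\supseteq[e_1-e_n,e_1+e_n]$; this is exactly the hypothesis needed to invoke property $c$ of Lemma \ref{lem:nastybody} for $v^\perp$. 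Next, we show $u\notin\supp\SM_{K[n-1]}$ via Corollary \ref{cor:areasupp}: it suffices to rule out $1$-dimensional touching cones near $u$. Setting $y(u') := u'/\|u'\|_L$ for the intersection of $\mathbb{R}_+u'$ with $\bd L$, continuity of $\|\cdot\|_L$ and $y(u) = e_1$ give $y(u')\in O$ for $u'$ near $u$; property $b$ then says $y(u')\in\relint F_{y(u')}$ for a $1$-face $F_{y(u')}$ of $L$, so the duality yields $T(K,u') = \mathbb{R}_+F_{y(u')}$, which is $2$-dimensional.

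The main step is to verify $u\in\supp\SM_{\proj_{v^\perp}K[n-2]}$ for every $v\in T(K,u)^\perp$. Applying Corollary \ref{cor:areasupp} inside $v^\perp$ and using Lemma \ref{lem:touchingproj}, this reduces to producing a sequence $u_m'\to u$ in $v^\perp\cap S^{n-1}$ with $\dim(T(K,u_m')\cap v^\perp) = 1$. Since $T(K,u_m')$ is a $2$-dimensional cone containing $u_m'\in v^\perp$, the intersection $T(K,u_m')\cap v^\perp$ has dimension $1$ or $2$, with the latter holding iff $F_{y(u_m')}\subseteq v^\perp$. The plan is to argue by contradiction: if no such $u_m'$ existed, there would be a relatively open neighborhood $N$ of $u$ in $v^\perp\cap S^{n-1}$ on which $F_{y(u')}\subseteq v^\perp$ for all $u'\in N$. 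The map $y$ is locally bi-Lipschitz from $v^\perp\cap S^{n-1}$ to $v^\perp\cap\bd L$ near $u$, so $y(N)$ has Hausdorff dimension $n-2$; but $y(N)\subseteq\bigcup_{F_y\subseteq v^\perp}F_y$, which by property $c$ of Lemma \ref{lem:nastybody} has Hausdorff dimension at most $n-3$, contradiction.

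The only real obstacle in the proof of this corollary is arranging that property $c$ applies to every relevant hyperplane $v^\perp$, which we handle by showing $v^\perp\supseteq[e_1-e_n,e_1+e_n]$ for every $v\in T(K,u)^\perp$. The genuine technical difficulty—engineering a body whose extremal segments are distributed so that no fixed $(n-1)$-plane through $[e_1-e_n,e_1+e_n]$ captures an $(n-2)$-dimensional chunk of them—is entirely encapsulated in Lemma \ref{lem:nastybody}; once that lemma is granted, the dimension-counting step above goes through cleanly.
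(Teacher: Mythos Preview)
Your proof is correct and follows essentially the same approach as the paper: set $K=L^\circ$ with $L$ from Lemma~\ref{lem:nastybody} and $u=e_1$, use the face/touching-cone duality to read off $\dim T(K,u')=2$ near $u$, and then invoke property~\textit{c} via a Hausdorff-dimension comparison to show that for each $v\in T(K,u)^\perp$ the ``bad'' directions (those with $T(K,u')\subset v^\perp$) cannot fill a neighborhood in $v^\perp\cap S^{n-1}$. Your version spells out the bi-Lipschitz radial map $y(u')=u'/\|u'\|_L$ and the contradiction more explicitly than the paper does, but the argument is the same.
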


\begin{proof}
Let $K=L^\circ$, where $L$ is the convex body provided by Lemma 
\ref{lem:nastybody} and let $u=e_1$. Then part $a$ of Lemma 
\ref{lem:nastybody} and the duality between faces and touching cones (see 
\S\ref{sec:touchingcones}) yields $T(K,u)=\mathbb{R}_+[e_1-e_n,e_1+e_n]$, 
and thus $\dim T(K,u)=2$. Moreover, by the same argument, part $b$ of 
Lemma \ref{lem:nastybody} shows that there exists $\varepsilon>0$ 
sufficiently small so that $\dim T(K,u')=2$ for all $u'\in 
B(u,\varepsilon)$. Therefore, $u\not\in \supp \SM_{K[n-1]}$ follows 
directly from Corollary \ref{cor:areasupp}.

Finally, by the same duality argument, part $c$ of Lemma 
\ref{lem:nastybody} shows that for every $v\in T(K,u)^\perp$, the union of 
all the touching cones $T(K,u')\subset v^\perp$ with $u'\in 
B(u,\varepsilon)$ has Hausdorff dimension at most $n-2$.
Therefore,
$$
	\dim T(\proj_{v^\perp}K,u') = \dim (T(K,u')\cap v^\perp) =1
$$
for a dense subset of $u'\in B(u,\varepsilon)\cap v^\perp$,
where we used Lemma 
\ref{lem:touchingproj}. In particular, 
$$
	u\in 
	\cl\big\{u'\in S^{n-1}\cap v^\perp:
	\dim T(\proj_{v^\perp}K,u')=1\big\},
$$
and thus $u\in\supp \SM_{\proj_{v^\perp}K[n-2]}$ by
Corollary \ref{cor:areasupp}.
\end{proof}

We now finally extend the argument to the general case.

\begin{proof}[Proof of part $b$ of Theorem \ref{thm:mainproj}]
Let $N\ge 4$ and $1<k<N-1$. We aim to construct bodies
$K,C_1,\ldots,C_{N-2}\in\mathcal{K}^N$ and $u\in S^{N-1}$
with $\dim T(K,u)=k$ so that $u\in \supp 
\SM_{\proj_{v^\perp}C_1,\ldots,\proj_{v^\perp}C_{N-2}}$ 
for every $v\in T(K,u)^\perp$, but
$u\not\in \supp \SM_{K,C_1,\ldots,C_{N-2}}$.

To this end, let $n=N-k+2$, and construct $\bar K\in\mathcal{K}^n$ and 
$\bar u\in S^{n-1}$ as in Corollary~\ref{cor:nastynasty}.
Let $K,u$ be their natural embedding in 
$\mathrm{span}\{e_1,\ldots,e_n\}\subseteq\mathbb{R}^N$, and
$$
	C_1 = \cdots = C_{n-2} = K,
	\qquad
	C_i = [0,e_{i+2}] \quad\text{for}\quad i=n-1,\ldots,N-2.
$$
Note that, by construction,
$$
	T(K,u) = T(\bar K,\bar u) \times \mathbb{R}^{N-n}.
$$
Thus $\dim T(K,u) = k$. On the other hand, by the projection formula 
\eqref{eq:projmixedarea}, the mixed area 
measures $\SM_{K,C_1,\ldots,C_{N-2}}$ and 
$\SM_{\proj_{v^\perp}C_1,\ldots,\proj_{v^\perp}C_{N-2}}$ reduce to the 
$n$-dimensional case in Corollary \ref{cor:nastynasty}, concluding the 
proof.
\end{proof}

\begin{rem}
The bodies $K,C_1,\ldots,C_{N-2}\in\mathcal{K}^N$ in the above proof have
empty interior. However, the construction can be modified as in the proof
of Lemma \ref{lem:hatk} to obtain
$K,C_1,\ldots,C_{N-2}\in\mathcal{K}^N_N$ that yield the same conclusion.
\end{rem}

\subsection{On the continuity of extreme directions}

The construction of the previous section illustrates that the support of 
mixed area measures can be poorly behaved under projection. One is 
therefore led to seek other approaches to Conjecture~\ref{conj:schneider} 
that are based on a direct analysis of convex bodies in $\mathbb{R}^n$. 
The aim of this section is to record an elementary example that 
illustrates a basic challenge that arises even in the most well-behaved 
situations.

A tantalizing setting for understanding Conjecture 
\ref{conj:schneider} is the special case that $h_{C_1},\ldots,h_{C_{n-1}}$ 
are smooth, since in this case there is an analytic description of the 
support that was explained in Remark \ref{rem:smoothmystery}.
The problem remains open even in this setting. This is particularly
surprising since the methods of Hartman and Nirenberg \cite{HN59} provide
powerful information. In particular, \cite[\S3, Lemma 
2]{HN59} implies that for every convex body $K$ with $h_K\in 
C^2(S^{n-1})$, there is a dense open set of $u\in S^{n-1}$ 
for which $\mathop{\mathrm{rank}}(D^2h_K(u)) = \dim (T(K,u)^\perp)$.
Thus the analytic characterization of Remark \ref{rem:smoothmystery}
agrees with Conjecture \ref{conj:schneider} at almost all points.

In the setting of Hartman and Nirenberg (e.g., for the proof of Theorem 
\ref{thm:hn}), the above property suffices to obtain the desired 
conclusion by a continuity argument. A basic problem in the mixed setting, 
however, is that even when all $v\in B(u,\varepsilon)$ are not 
$(C_1,\ldots,C_{n-1})$-extreme, the set $I$ that witnesses the 
failure of extremality (cf.\ Definition \ref{defn:extreme}) can vary with 
$v$ in a discontinuous fashion. 

\begin{example}
\label{ex:cexample}
Since it leads to simpler expressions, we formulate the example 
in terms of mixed Hessian 
measures; the example can be translated to an analogous example
of mixed area measures using the correspondence in \S\ref{sec:mixedhess}.

Let $\Omega = \mathbb{R}\times (-1,1)\subset\mathbb{R}^2$.
The function $h\in\mathrm{Conv}(\Omega)$ defined by
$$
	h(x_1,x_2)=\frac{x_1^2}{1-x_2^2}
$$
is strictly convex whenever $x_1\ne 0$. We can now define 
$f,g\in\mathrm{Conv}(\Omega)$ by setting
$f(x_1,x_2)=h(x_1,x_2)1_{x_1>0}$ 
and $g(x_1,x_2)=h(x_1,x_2)1_{x_1<0}$.
The structure of the sets $L(f,x)$ and
$L(g,x)$ in different regions of $\Omega$ is illustrated in Figure 
\ref{fig:cexample}.

We readily see that every $x\in\Omega$ is not $(f,g)$-extreme, and thus 
$\HH_{f,g}=0$ by Corollary 
\ref{cor:mixedhess}.
However,
the regions $\{x_1<0\}$, 
$\{x_1=0\}$, and $\{x_1>0\}$ all
require a different set $I$ 
to witness non-extremality
(i.e., so that $\dim(\bar L(f_I,x)^\perp)<|I|$).
\end{example}

\begin{figure}
\centering
\begin{tikzpicture}

\fill[color=blue!5!white] (-5,1.5) -- (0,1.5) -- (0,-1.5) -- (-5,-1.5);
\fill[color=red!5!white] (5,1.5) -- (0,1.5) -- (0,-1.5) -- (5,-1.5);

\draw[thick,dashed,color=black!30!white] (-5,1.5) -- (5,1.5);
\draw[thick,dashed,color=black!30!white] (-5,-1.5) -- (5,-1.5);
\draw[thick,color=black!30!white] (-4,1.5) -- (4,1.5);
\draw[thick,color=black!30!white] (-4,-1.5) -- (4,-1.5);

\draw[thick] (0,-1.5) -- (0,1.5);

\draw (2.5,0.15) node {$\scriptstyle\dim L(f,x)=0$};
\draw (2.5,-0.15) node {$\scriptstyle\dim L(g,x)=2$};

\draw (-2.5,0.15) node {$\scriptstyle\dim L(f,x)=2$};
\draw (-2.5,-0.15) node {$\scriptstyle\dim L(g,x)=0$};

\draw (0.5,1.95) node[right] {$\scriptstyle\dim L(f,x)=\dim L(g,x)=1$};
\draw[->] (0.55,1.9) to[out=210,in=25] (0.075,1);

\end{tikzpicture}
\caption{Illustration of Example \ref{ex:cexample}.\label{fig:cexample}}
\end{figure}
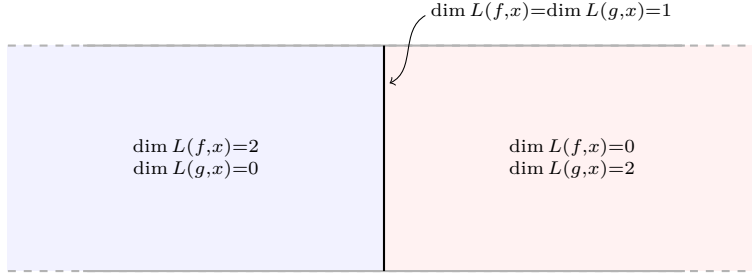

The discontinuous behavior that is illustrated by this example provides a 
basic obstacle to various approaches for the analysis of mixed area 
measures.

\appendix

\section{An explicit construction for Lemma \ref{lem:nastybody}}
\label{sec:nastyexplicit}

The aim of this appendix is to construct a function $f$ with the 
properties required in the proof of Lemma \ref{lem:nastybody}. Any 
sufficiently generic choice of $f$ is expected to achieve the same 
conclusion; we exhibit one explicit example for concreteness.

\begin{lem}
Define the function
$$
	f(x_1,\ldots,x_{n-2}) =
	(x_1-1) \exp\Bigg(
	\sum_{j=2}^{n-2} (x_j^3 + jx_j)
	\Bigg).
$$
Then for any $(v_2,\ldots,v_{n-1})\ne 0$ and $t>0$, the solution set $S$
of the system of 
equations \eqref{eq:nastysystem1}--\eqref{eq:nastysystem3} has
Hausdorff dimension at most $n-4$.
\end{lem}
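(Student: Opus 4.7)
I begin with the global smooth diffeomorphism of $\mathbb{R}^{n-1}$ given by replacing $a_{n-1}$ with $b := a_{n-1} - tf(a) = a_{n-1} - t(a_1-1)e^P$, where $P = \sum_{j=2}^{n-2}(a_j^3 + ja_j)$; this is affine in $a_{n-1}$ and hence a diffeomorphism. Writing $Q = \sum_{j=2}^{n-2} v_j(3a_j^2 + j)$, the three equations transform to
\[
\Psi_1 = a_1^2 + \textstyle\sum_{j=2}^{n-2} a_j^2 + b^2 - 1, \quad
\Psi_2 = \textstyle\sum_{j=2}^{n-2} v_j a_j + v_{n-1}\bigl(b + t(a_1-1)e^P\bigr), \quad
\Psi_3 = Qb + v_{n-1},
\]
and notably $\Psi_3$ no longer involves $a_1$. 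The locus $a_1 = 1$ forces $f = 0$, whereupon $\Psi_1 = 0$ kills every remaining coordinate, giving only the isolated point $(1, 0, \ldots, 0)$; so I may assume $a_1 \ne 1$ for the rest.

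\emph{Case A} ($v_{n-1} \ne 0$). Here $\Psi_3 = 0$ forces $Q \ne 0$ and $b = -v_{n-1}/Q$; then $\Psi_1 = 0$ determines $a_1 = \sigma r$ with $\sigma \in \{\pm 1\}$ and $r := \sqrt{1 - \sum a_j^2 - v_{n-1}^2/Q^2}$. Substituting into $\Psi_2$ reduces the system to the single analytic equation
\[
G_\sigma(a_2,\ldots,a_{n-2}) := \bigl(\textstyle\sum v_j a_j - v_{n-1}^2/Q\bigr) + v_{n-1}\,t\,(\sigma r - 1)\,e^P = 0
\]
on the open set $U = \{Q \ne 0,\ r^2 > 0\} \subset \mathbb{R}^{n-3}$. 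I claim that neither $G_+$ nor $G_-$ is identically zero on any connected component of $U$: because $v_{n-1}\ne 0$ forces $r<1$ strictly on $U$, the factor $\sigma r - 1$ is nowhere-vanishing, so $G_\sigma \equiv 0$ would express $e^P$ as an algebraic function of $(a_2,\ldots,a_{n-2})$, contradicting the classical transcendence of $e^P$ over the algebraic closure of $\mathbb{R}(a_2,\ldots,a_{n-2})$ whenever $P$ is a nonconstant polynomial. Hence each $\{G_\sigma = 0\}$ has dimension $\le n-4$ in $\mathbb{R}^{n-3}$, and lifting via $a_1 = \sigma r$, $b = -v_{n-1}/Q$ preserves this bound. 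The residual boundary piece $\{r = 0\}$ lies in the hypersurface $\sum a_j^2 + v_{n-1}^2/Q^2 = 1$ with $a_1 = 0$ and $b$ determined, so it also contributes $\le n-4$.

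\emph{Case B} ($v_{n-1} = 0$, whence $(v_2,\ldots,v_{n-2})\ne 0$). Now $\Psi_3 = Qb = 0$ splits $S$ into two pieces. On $\{b = 0\}$, $\Psi_1$ is a sphere in $\mathbb{R}^{n-2}$ (coordinates $a_1, a_2, \ldots, a_{n-2}$) and $\Psi_2 = \sum v_j a_j = 0$ is a genuine hyperplane in $(a_2,\ldots,a_{n-2})$, their intersection a manifold of dim $n-4$. On $\{Q = 0\}$, the polynomial equations $\Psi_2 = 0$ and $Q = 0$ on $(a_2,\ldots,a_{n-2})$ have gradients $(v_j)_j$ and $(6v_j a_j)_j$, linearly dependent only on a $1$-dimensional locus, so they cut out a set of dim $\le n-5$; adjoining $\Psi_1 = 0$ in $(a_1,b)$ contributes one more dimension for a total of $\le n-4$. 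The \textbf{main obstacle} is the transcendence step in Case A; the cleanest justification is to cite the classical theorem that $e^P$ is transcendental over $\overline{\mathbb{R}(a_2,\ldots,a_{n-2})}$ for every nonconstant polynomial $P$, which immediately rules out the identity $G_\sigma \equiv 0$ of the required algebraic-times-exponential form.
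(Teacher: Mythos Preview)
Your Case A is essentially correct and parallels the paper's approach (the paper solves for $a_{n-1}$ from $\Psi_2$ and then $a_1$ from $\Psi_3$, while you solve for $b$ from $\Psi_3$ and $a_1$ from $\Psi_1$; both reduce to one analytic equation in $n-3$ variables, and the transcendence of $e^P$ over $\mathbb{R}(a_2,\ldots,a_{n-2})$ is exactly what is needed to rule out the degenerate case).

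There is, however, a genuine gap in your Case B, subcase $Q=0$. The claim that the gradients $(v_j)_j$ and $(6v_ja_j)_j$ are linearly dependent ``only on a $1$-dimensional locus'' is false: the dependence locus is $\{a_j=c\ \text{for all }j\text{ with }v_j\ne 0\}$, whose dimension is $n-2-k$ where $k$ is the number of nonzero entries among $v_2,\ldots,v_{n-2}$, and $k$ can be as small as $2$. More importantly, even if that claim were true it would not give the conclusion you need. What must actually be checked is that the restriction $Q|_{\{\Psi_2=0\}}$ is not identically zero, i.e.\ that the linear form $\Psi_2$ does not divide the quadratic $Q$. Your gradient computation is blind to the constant terms $j$ in $Q=\sum v_j(3a_j^2+j)$, and without those constants the claim fails: for $f(x)=(x_1-1)\exp\big(\sum_{j\ge 2}x_j^3\big)$ and $v_2=1$, $v_3=-1$, $v_j=0$ otherwise, one gets $Q=3(a_2^2-a_3^2)$, which vanishes identically on $\{\Psi_2=0\}=\{a_2=a_3\}$, so $\{\Psi_2=0\}\cap\{Q=0\}$ has dimension $n-4$, not $n-5$.

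The paper's proof uses precisely those constants: after eliminating $a_\ell$ via $\Psi_2=0$, the restriction $Q|_{\{\Psi_2=0\}}$ becomes an explicit quadratic $q$ in $n-4$ variables whose constant term is $\sum_{j=2}^{n-2} jv_j$, and one checks by hand that $q\equiv 0$ forces conditions on $v$ that are incompatible. You need to insert this computation (or an equivalent one) to close the gap.
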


\begin{proof}
It is clear that the only solution of \eqref{eq:nastysystem1} with
$a_1=1$ is $a=e_1$. We therefore consider in the sequel only
solutions with $a_1\ne 1$. Then \eqref{eq:nastysystem3} can
be simplified to
\begin{equation}
\label{eq:nastysystem3bis}
	\Bigg(\sum_{j=2}^{n-2} v_j (3a_j^2 + j)\Bigg)
	(a_{n-1}-tf(a)) = - v_{n-1}.
\end{equation}
We must now consider two distinct cases.

\medskip

\textbf{Case 1.} If $v_{n-1}\ne 0$, then \eqref{eq:nastysystem2} yields
$$
	a_{n-1} = 
	-\sum_{j=2}^{n-2}\frac{v_ja_j}{v_{n-1}}.
$$
Moreover, as $v_{n-1}\ne 0$, both factors on the left-hand 
side of \eqref{eq:nastysystem3bis} are nonzero. Substituting the 
definition of $f$ and the previous
equation in \eqref{eq:nastysystem3bis}, we obtain
$$
	a_1
	=
	g(a_2,\ldots,a_{n-2})
	=
	1+\frac{1}{t}
	\Bigg(
	\frac{v_{n-1}}{\sum_{j=2}^{n-2} v_j (3a_j^2 + j)}
	-\sum_{j=2}^{n-2}\frac{v_ja_j}{v_{n-1}}
	\Bigg)
	\exp\Bigg(-
	\sum_{j=2}^{n-2} (a_j^3 + ja_j)
	\Bigg).
$$
Finally, combining \eqref{eq:nastysystem1},
\eqref{eq:nastysystem3bis}, and the previous equation yields
$$
	h(a_2,\ldots,a_{n-2})=
	1-g(a_2,\ldots,a_{n-2})^2-a_2^2-\cdots-a_{n-2}^2-\frac{v_{n-1}^2}{
	\big(\sum_{j=2}^{n-2} v_j (3a_j^2 + j)\big)^2}
	=0.
$$
Thus we have shown that
\begin{multline*}
	S\backslash\{e_1\} \subseteq
	\bar S = 
	\Bigg\{
	a\in\mathbb{R}^{n-1}:
	\sum_{j=2}^{n-2} v_j (3a_j^2 + j)\ne 0, ~
	h(a_2,\ldots,a_{n-2})=0,\\
	a_1=g(a_2,\ldots,a_{n-2}),~
	a_{n-1} =-\sum_{j=2}^{n-2}\frac{v_ja_j}{v_{n-1}}	
	\Bigg\}.
\end{multline*}
Now note that $h$ is a function of $n-3$ variables that is real analytic
on $\mathop{\mathrm{dom}}h=\{(a_2,\ldots,a_{n-2}):\sum_{j=2}^{n-2} 
v_j (3a_j^2 + j)\ne 0\}$, and thus the subset of $\mathop{\mathrm{dom}}h$
on which $h$ vanishes has Hausdorff dimension at most $n-4$ \cite{Mit20}.
Since $a_1$ and $a_{n-1}$ are functions of $a_2,\ldots,a_{n-2}$
that are locally Lipschitz on $\mathop{\mathrm{dom}}h$, it follows
that the set $\bar S$ and thus also $S$ has Hausdorff dimension at most 
$n-4$ \cite[Theorem 7.5]{Mat95}. 

\medskip

\textbf{Case 2.} Now suppose that $v_{n-1}=0$. Then 
\eqref{eq:nastysystem3bis} yields $S\backslash\{e_1\}=S_1\cup S_2$ with
\begin{align*}
	S_1&=\big\{a\in S\backslash\{e_1\}: 
	a_{n-1}=tf(a_1,\ldots,a_{n-2})\big\},\\
	S_2&=\Bigg\{a\in S\backslash\{e_1\}:
	\sum_{j=2}^{n-2} v_j (3a_j^2 + j) = 0\Bigg\}.
\end{align*}
We consider $S_1$ and $S_2\backslash S_1$ separately.

To control $S_1$, note that
\eqref{eq:nastysystem1} and \eqref{eq:nastysystem2} imply that
$$
	S_1 \subseteq \Bigg\{ a\in\mathbb{R}^{n-1}:
	\sum_{j=1}^{n-2} a_j^2 =1,~
	\sum_{j=2}^{n-2} v_ja_j=0,~
	a_{n-1}=tf(a_1,\ldots,a_{n-2})\Bigg\}.
$$
In other words, $(a_1,\ldots,a_{n-2})$ lie in the intersection of a sphere 
with a hyperplane, which has dimension $n-4$. As $a_{n-1}$ is a locally
Lipschitz function of $(a_1,\ldots,a_{n-2})$, also $S_1$ has
Hausdorff dimension at most $n-4$ \cite[Theorem 7.5]{Mat95}.

To control $S_2$, let $2\le \ell \le n-2$ so that
$v_\ell\ne 0$. As $\sum_{j=2}^{n-2} v_j (3a_j^2 + j) = 0$ by the 
definition of $S_2$ and as $\sum_{j=2}^{n-2} v_j a_j = 0$ by 
\eqref{eq:nastysystem2}, we obtain
\begin{multline*}
	q(a_2,\ldots,a_{\ell-1},a_{\ell+1},\ldots,a_{n-2}) = \\
	3\sum_{j=2}^{\ell-1} v_j a_j^2  +
	3\sum_{j=\ell+1}^{n-2} v_j a_j^2 +
	3v_\ell \Bigg(
	\sum_{j=2}^{\ell-1} \frac{v_ja_j}{v_\ell}+
	\sum_{j=\ell+1}^{n-2} \frac{v_ja_j}{v_\ell}
	\Bigg)^2
	+ 
	\sum_{j=2}^{n-2} j v_j = 0.
\end{multline*}
Note that $q$ is a quadratic function of $n-4$ variables. Thus its
zero set can have Hausdorff dimension $n-4$ only if $q$ vanishes 
identically. In that case we must have $\sum_{j=2}^{n-2} jv_j =0$ 
and either $v_2=\cdots=v_{\ell-1}=v_{\ell+1}=\cdots=v_{n-2}=0$, or
$v_r=-v_\ell$ for some $2\le r\le n-2$, $r\ne\ell$ and 
$v_j=0$ for $2\le j\le n-2$, $j\ne\ell,r$. Clearly
no such $v$ exists, so the zero set of $q$ has 
Hausdorff dimension at most $n-5$.

Now note that by \eqref{eq:nastysystem2}, we can write
$$
	a_\ell = - \sum_{j=2}^{\ell-1} \frac{v_ja_j}{v_\ell}
	- \sum_{j=\ell+1}^{n-2} \frac{v_ja_j}{v_\ell},
$$
while \eqref{eq:nastysystem1} implies that
$$
	a_{n-1}
	=tf(a_1,\ldots,a_{n-2})\pm\sqrt{1-\sum_{j=1}^{n-2} a_j^2}.
$$
Note in particular that the latter equation implies that
$\sum_{j=1}^{n-2} a_j^2<1$ on the complement of $S_1$. Thus
$S_2\backslash S_1 \subseteq S_+\cup S_-$ with
\begin{multline*}
	S_\pm = \Bigg\{a\in\mathbb{R}^{n-1}:
	\sum_{j=1}^{n-2} a_j^2<1,~
	q(a_2,\ldots,a_{\ell-1},a_{\ell+1},\ldots,a_{n-2}) = 0,\\
	a_\ell = - \sum_{j=2}^{\ell-1} \frac{v_ja_j}{v_\ell}
	- \sum_{j=\ell+1}^{n-2} \frac{v_ja_j}{v_\ell},~	
	a_{n-1}
	=tf(a_1,\ldots,a_{n-2})\pm\sqrt{1-\sum_{j=1}^{n-2} a_j^2}
	\Bigg\}.
\end{multline*}
Since the zero set of $q$ has Hausdorff dimension at most $n-5$ and 
as $a_\ell$ and $a_{n-1}$ are functions of
$a_1,\ldots,a_{\ell-1},a_{\ell+1},\ldots,a_{n-2}$ that are locally 
Lipschitz for $\sum_{j=1}^{n-2} a_j^2<1$, the sets
$S_\pm$ have Hausdorff dimension at most $n-4$ \cite[Theorem 7.5]{Mat95}.
\end{proof}


\subsection*{Acknowledgments}

The authors were supported in part by NSF grants DMS-2054565 and 
DMS-2347954. This paper was completed while R.v.H.\ was a member of the 
Institute for Advanced Study in Princeton, NJ, which is gratefully 
acknowledged for providing a fantastic mathematical environment.

\bibliographystyle{abbrv}
\bibliography{ref}

\end{document}